\newtheorem{thm}{Theorem}
\newtheorem{assumption}[thm]{Assumption}
\newtheorem{lem}[thm]{Lemma}
\newtheorem{cor}[thm]{Corollary}
\newtheorem{defi}[thm]{Definition}
\newtheorem{prop}[thm]{Proposition}
\newtheorem{rk}[thm]{Remark}
\newcommand{\toL}{\overset{d}{\longrightarrow}}
\newcommand{\vip}{\vskip.2cm}
\newcommand{\COMMENTAIRE}[1]{}
\newcommand{\field}[1]{\mathbb{#1}}
\newcommand{\EE}{\field{E}}
\newcommand{\GG}{\field{G}}
\newcommand{\PP}{\field{P}}
\newcommand{\RR}{\field{R}}
\newcommand{\TT}{\field{T}}
\newcommand{\VV}{\field{V}}
\newcommand{\Cc}{{\mathcal C}}
\newcommand{\Dd}{\mathcal{D}}
\newcommand{\Ff}{{\mathcal F}}
\newcommand{\Gg}{{\mathcal G}}
\newcommand{\Hh}{{\mathcal H}}
\newcommand{\Nn}{{\mathcal N}}
\newcommand{\Pp}{{\mathcal P}}
\newcommand{\Qq}{{\mathcal Q}}
\newcommand{\Ss}{{\mathcal S}}
\newcommand{\Ttransition}{\boldsymbol{\Pp}}
\def \ep {\varepsilon}
\def \a {\alpha}
\def \b {\beta}
\newcommand{\noisedensity}{G}
\newcommand{\statespace}{{\mathbb R}}
\begin{document}

\title[Estimation in nonlinear bifurcating autoregressive models]{Autoregressive functions estimation in nonlinear bifurcating autoregressive models}

\author{S. Val\`ere Bitseki Penda and Ad\'ela\"ide Olivier}

\address{S. Val\`ere Bitseki Penda, IMB, CNRS-UMR 5584, Universit\'e Bourgogne Franche-Comt\'e, 9 avenue Alain Savary, 21078 Dijon Cedex, France.}

\email{simeon-valere.bitseki-penda@u-bourgogne.fr}

\address{Ad\'ela\"ide Olivier, CEREMADE, CNRS-UMR 7534, Universit\'e Paris-Dauphine, Place du mar\'echal de Lattre de Tassigny, 75775 Paris Cedex 16, France}

\email{adelaide.olivier@ceremade.dauphine.fr}

\begin{abstract}
Bifurcating autoregressive processes, which can be seen as an adaptation of autoregressive processes for a binary tree structure, have been extensively studied during the last decade in a parametric context. In this work we do not specify any {\it a priori} form for the two autoregressive functions and we use nonparametric techniques.
We investigate both nonasymptotic and asymptotic behaviour of the Nadaraya-Watson type estimators of the autoregressive functions.
We build our estimators observing the process on a finite subtree denoted by $\TT_n$, up to the depth $n$. Estimators achieve the classical rate $|\TT_n|^{-\beta/(2\beta+1)}$ in quadratic loss over H\"older classes of smoothness. 
We prove almost sure convergence, asymptotic normality giving the bias expression when choosing the optimal bandwidth. 
Finally, we address the question of asymmetry: we develop an asymptotic test for the equality of the two autoregressive functions which we implement both on simulated and real data.
\end{abstract}

\maketitle
%\begin{center}
%%{\tt [FIRST INCOMPLETE DRAFT]}
%{\tt [VERSION 3 MAI 2015 pour relecture de Valère]}
%\vspace{1cm}
%\end{center}

\textbf{Keywords}: Bifurcating Markov chains, binary trees, bifurcating autoregressive processes, nonparametric estimation, Nadaraya-Watson estimator, minimax rates of convergence, asymptotic normality, asymmetry test.

\textbf{Mathematics Subject Classification (2010)}: 62G05, 62G10, 62G20, 60J80, 60F05, 60J20, 92D25.

%Stat :
%62G05 Nonparametric inference - Estimation
%62G10 Nonparametric inference  - Hypothesis testing
%62G20 Nonparametric inference - Asymptotic properties
%Proba :
%60J80 Branching processes (Galton-Watson, birth-and-death, etc.)
%60F05 Central limit and other weak theorems
%60F10 Large deviations
%60J20 Applications of Markov chains and discrete-time Markov processes on general state spaces (social mobility, learning theory, industrial processes, etc.)
%Bio :
%92D25 Population dynamics (general)

%DHKR et HO : 35A05, 35B40, 45C05, 45K05, 82D60, 92D25, 62G05, 62G20.

\section{\textsc{Introduction}}

\subsection{A generalisation of the bifurcating autoregressive model}

\subsubsection*{BAR process}

Roughly speaking, bifurcating autoregressive processes, BAR processes for short, are an adaptation of autoregressive processes when the index set have a binary tree structure.
BAR processes were introduced by Cowan and Staudte \cite{CS86} in 1986 in order to study cell division in \textit{Escherichia coli} bacteria.
For $m\geq 0$, let  $\GG_m = \{ 0,1 \}^m $ (with $\GG_0=\{\emptyset\}$) and
introduce the infinite genealogical tree
$$
\TT = \bigcup_{m = 0}^{\infty} \GG_m.
$$
For $u \in \mathbb G_m$, set $|u|=m$ and define the concatenation $u0 = (u,0)\in \mathbb G_{m+1}$ and $u1=(u,1)\in \mathbb G_{m+1}$.
%We can see the nodes of $\TT$ as representing a given population of cells, where the initial individual is denoted by $\emptyset$. 
%We suppose that each individual $u \in \TT$ gives birth to two individuals denoted by $u0$ and $u1$. %called respectively type $0$ pole and type $1$ pole. 
In \cite{CS86}, the original BAR process is defined as follows. A cell $u\in \TT$ with generation time $X_u$ gives rise to the offspring $(u0,u1)$ with generation times
\begin{equation*}
\begin{cases}
X_{u0}  = a + b X_u + \ep_{u0}, \\
X_{u1} = a + b X_u + \ep_{u1},
\end{cases}
\end{equation*}
where $a$ and $b$ are unknown real parameters, with $|b|<1$ which measures heredity in the transmission of the biological feature. The noise sequence $\big((\ep_{u0}, \ep_{u1})\big)_{u \in \TT }$ forms a sequence of independent and identically distributed bivariate centered Gaussian random variables %with law $\Nn_{2}(0,\Gamma)$.
and represents environmental effects; the initial value $X_\emptyset$  is drawn according to a Gaussian law.

Since then, several extensions of this model have been studied and various estimators for unknown parameters have been proposed. 
First, one can mention \cite{Guyon} where Guyon introduces asymmetry to take into account the fact that autoregressive parameters for type $0$ or type $1$ cells can differ. Introducing the bifurcating Markov chain theory, Guyon studies the asymptotic behaviour of the least squares estimators of the unknown parameters. He also introduces some asymptotic tests which allow to decide if the model is symmetric or not. 
Several extensions of this linear model have been proposed and studied from a parametric point of view, see for instance Basawa and Huggins \cite{BH99, BH00} and Basawa and Zhou \cite{BZ04, BZ05} where the BAR process is studied for non-Gaussian noise and long memory. 
Around 2010, Bercu, Blandin, Delmas, de Saporta, G\'egout-Petit and Marsalle extended in different directions the study of the BAR process.
Bercu \textit{et al.} \cite{BdSGP09} use martingale approach in order to study least squares estimators of unknown parameters for processes with memory greater than 1 and without normality assumption on the noise sequence.
Even more recently, \cite{dSGPM11,dSGPM12} takes into account missing data and \cite{BB13, Blandin13, dSGPM13} study the model with random coefficients. 
A number of other extensions were also surveyed, one can cite Delmas and Marsalle \cite{DM10} for a generalisation to Galton-Watson trees and Bitseki Penda and Djellout \cite{BD14} for deviations inequalities and moderate deviations. 

\subsubsection*{Nonlinear BAR process} Nonlinear bifurcating autoregressive (NBAR, for short) processes 
generalize BAR processes, avoiding an {\it a priori} linear specification on the two autoregressive functions.  Let us introduce precisely a NBAR process which is specified by {\bf 1)} a filtered probability space $(\Omega,\Ff,(\Ff_m)_{m\geq 0},\PP)$, together with a measurable state space $(\RR,\mathfrak B)$,
% where $\mathfrak B$ is the Borel sigma-field of $\mathbb R$, 
{\bf 2)} two measurable functions $f_0,f_1 : \statespace \rightarrow \statespace$ and {\bf 3)} a probability density $G$ on $(\statespace\times\statespace,\mathfrak B\otimes \mathfrak B)$ with a null first order moment. %continuous with respect to the Lebesgue measure (with density still denoted by~$G$). 
%
%%We now work on some probability space $(\Omega, \Ff, \mathbb{P})$.
%%%
%%%For $u\in \TT$, we denote by $u0 = (u,0)$ and $u1 = (u,1)$ its two descendants.
%%For an individual $u \in \TT$, we are interested in some feature $X_u$ (it may be its size, its growth rate, its age, $\ldots$) with values in $\statespace$ endowed with its Borel $\sigma$-field $\mathfrak B$. We suppose that the features $X_{u0}$ and $X_{u1}$ associated to its offspring, denoted by $u0$ and $u1$, depend on the past only through $X_u$, via an autoregressive system of equations.
%
In this setting we have the following

\begin{defi}\label{def:NBAR}
A NBAR process is a family $(X_u)_{u \in \TT}$ of random variables with value in $(\RR,\mathfrak B)$ such that, for every $u\in \TT$, $X_u$ is $\Ff_{|u|}$-measurable and
$$
X_{u0} = f_0(X_u) + \varepsilon_{u0} \quad \text{ and } \quad X_{u1} = f_1(X_u) + \varepsilon_{u1}
$$
where $\big((\varepsilon_{u0}, \varepsilon_{u1})\big)_{u\in \TT}$ is a sequence of independent bivariate random variables with common density~$\noisedensity$.
\end{defi}

The distribution of $(X_u)_{u \in \mathbb T}$ is thus entirely determined by the autoregressive functions $(f_0,f_1)$, the noise density $G$ and an initial distribution for $X_{\emptyset}$. Informally, we view  $\mathbb T$ as a population of individuals, cells or particles whose evolution is governed by the following dynamics: to each $u \in \mathbb T$ we associate a trait $X_u$ (its size, lifetime, growth rate, DNA content and so on) with value in $\mathbb R$. At its time of death, the particle $u$ gives rise to two children $u0$ and $u1$. Conditional on $X_u=x$, the trait $(X_{u0}, X_{u1}) \in \RR^2$ of the offspring of $u$ is a perturbed version of $\big(f_0(x),f_1(x)\big)$. 

The strength of this model lies in the fact that there is no constraint on the form of $f_0$ and $f_1$, whose form is free. This is particularly interesting in view of applications for which no {\it a priori} knowledge is available on the autoregressive links.  \\

When $X_{\emptyset}$ is distributed according to a measure $\mu(dx)$ on $(\RR,\mathfrak B)$, we denote by $\PP_\mu$ the law of the NBAR process $(X_u)_{u \in \TT}$ and by $\EE_\mu[\cdot]$ the expectation with respect to the probability $\PP_\mu$.

%%%\subsubsection*{Application to the aging of E. coli}

%\newpage
\subsection{Nonparametric estimators of the autoregressive functions}

\subsubsection*{Estimation of the autoregressive functions} Our aim is to estimate the unknown autoregressive functions $f_{0}$ and $f_{1}$ in Definition~\ref{def:NBAR} from the observation of a subpopulation. For that purpose, we propose to make use of a Nadaraya-Watson type estimator (introduced independently by Nadaraya \cite{Nadaraya64} and Watson~\cite{Watson64} in 1964). The Nadaraya-Watson estimator is a kernel estimator of the regression function $\EE \big[ Y | X = x \big]$ when observing $n$ pairs $(X_1, Y_1), \ldots,$ $(X_n, Y_n)$ of independent random variables distributed as $(X,Y)$. 
The Nadaraya-Watson estimator was also used in the framework of autoregressive time series in order to reconstruct $\EE \big[ X_{n} | X_{n-1} = x \big]$, assuming that $( X_n)_{n \geq 0}$ is stationary, see \cite{Robinson1983, Hardle92}. We generalize here the use of the Nadaraya-Watson estimator to the case of an autoregressive process indexed by a binary tree {\it i.e.} a NBAR process.\\

For $n \geq 0$, introduce the genealogical tree up to the $(n+1)$-th generation, $\mathbb T_{n+1} = \bigcup_{m=0}^{n+1} \GG_m$. Assume we observe $\mathbb X^{n+1} = (X_u)_{u \in \mathbb  T_{n+1}}$, {\it i.e.} we have $|\TT_{n+1}| = 2^{n+2}-1$ random variables with value in $\statespace$. 
Let $\Dd \subset \statespace$ be a compact interval. We propose to estimate $(f_0(x), f_1(x))$ the autoregressive functions at point $x \in \Dd$ from the observations $\mathbb X^{n+1}$
 by
\begin{equation}\label{eq:estimators_functions}
%\Big( \, \widehat{f}_{\iota, n}(x) = \frac{ \frac{1}{|\TT_n| h_n} \sum\limits_{u \in \TT_n} K\left(\frac{x-X_u}{h_n}\right) X_{u\iota}}{\frac{1}{|\TT_n| h_n} \sum\limits_{u \in \TT_n} K\left(\frac{x-X_u}{h_n}\right)} \, ,  \iota \in \{ 0,1 \} \, \Big) ,
\Big( \, \widehat{f}_{\iota, n}(x) = \frac{ |\TT_n|^{-1} \sum\limits_{u \in \TT_n} K_{h_n}(x-X_u) X_{u\iota}}{ \big( |\TT_n|^{-1} \sum\limits_{u \in \TT_n} K_{h_n}(x-X_u) \big) \vee \varpi_n} \, ,  \iota \in \{ 0,1 \} \, \Big) ,
\end{equation}
where $\varpi_n > 0$ and we set $K_{h_n}(\cdot) = h_n^{-1} K(h_n^{-1}\cdot)$ for $h_n>0$ and a kernel function $K : \RR \rightarrow \RR$ such that $\int_\RR K = 1$.

\subsubsection*{Main theoretical results and outline}
Our first objective in this work is to study the estimators \eqref{eq:estimators_functions} both from nonasymptotic and asymptotic points of view. 
To our best knowledge, there is no extensive nonparametric study for nonlinear bifurcating autoregressive processes.
We can mention the applications of Bitseki Penda, Escobar-Bach and Guillin \cite{BPEBG15} (section~4) where deviations inequalities are derived for Nadaraya-Watson type estimators of the autoregressive functions. 
We also refer to Bitseki Penda, Hoffmann and Olivier \cite{BHO} where some characteristics of a NBAR process are estimated nonparametrically (the invariant measure, the mean-transition and the $\TT$-transition).

Our nonasymptotic study includes the control of the quadratic loss in a minimax sense (Theorems~\ref{thm:upper_rateBAR} and~\ref{thm:lower_rateBAR}) and our asymptotic study includes almost sure convergence (Proposition~\ref{prop:asymptotic_as}) and asymptotic normality (Theorems~\ref{thm:asymptotic_normality} and~\ref{thm:asymptotic_normality2}). %and a moderate deviations principle (Theorem~\ref{thm:mdp}). 
To this end, we shall make use of nonasymptotic behaviour for bifurcating Markov chains (see \cite{Guyon, BDG14}) and asymptotic behaviour of martingales. 
We are also interested in comparing the two autoregressive functions $f_0$ and $f_1$ and to test whether the phenomenon studied is symmetric or not. The test we build (in Theorem \ref{thm:test}) to do so relies on our asymptotic results (see Corollary \ref{cor:TCLBierens}).\\

The present work is organised as follows.
The results are obtained under the assumption of geometric ergodicity of the so-called tagged-branch Markov chain we define in Section~\ref{sec:nonasympto}, together with the nonasymptotic results. 
In Section~\ref{sec:asympto}, we state asymptotic results for our estimators which enable us to address the question of asymmetry and build a test to compare $f_0$ and $f_1$. %in Section~\ref{sec:asymmetry_test}.
We also give numerical results to illustrate the estimation and test strategies on both simulated and real data (Section~\ref{sec:num}). Section~\ref{sec:discussion} encloses a discussion.
%and compare it to the test built in the linear case by \cite{Guyon}. 
The last parts of the article, Section~\ref{sec:proofs} with an appendix in Section \ref{sec:appendix}, are devoted to the proofs of our results.

\section{\textsc{Nonasymptotic behaviour}} \label{sec:nonasympto}

\subsection{Tagged-branch chain}

In this section we build a chain $(Y_m)_{m \geq 0}$ corresponding to a lineage taken randomly (uniformly at each branching event) in the population, a key object in our study. 
%In the specific case of NBAR processes, the tagged-branch chain $Y$ can be constructed as follows. 
Let $(\iota_m)_{m \geq 1}$ be a sequence of independent and identically distributed random variables that have the Bernoulli distribution of parameter $1/2$. Introduce 
 \begin{equation} \label{eq:gepsmargin}
G_0(\cdot) = \int_\statespace \noisedensity(\cdot,y) dy \quad \text{and} \quad G_1(\cdot) = \int_\statespace \noisedensity(x,\cdot) dx
 \end{equation}
the marginals of the noise density $\noisedensity$ and let $(\varepsilon'_m)_{m \geq 1}$ be a sequence of independent random variables such that $\varepsilon'_m$ given $\iota_m = \iota$ has density $G_\iota$, for $\iota \in \{0,1\}$. In addition $\iota_m$ and $\ep'_m$ are independent of $(Y_k)_{0\leq k \leq m-1}$. We now set
\begin{equation} \label{eq:Y}
\left\{
\begin{array}{l}
  Y_0 = X_{\emptyset},  \\
  Y_{m} = f_{\iota_m}(Y_{m-1}) + \ep'_m \, , \quad m\geq 1.  
\end{array}
\right.
\end{equation}
Then the so-called tagged-branch chain $Y = (Y_m)_{m\geq 0}$ has transition
%$\PP(Y_{1} \in dy | Y_{0} = x) = \Qq(x,dy)$
\begin{equation} \label{eq:QtransBAR}
\Qq(x,dy) = \frac{1}{2}\Big(G_0\big(y-f_0(x)) + G_1\big(y-f_1(x)\big) \Big) dy,
\end{equation}
which means that for all $m\geq1$, we have $\PP(Y_{m} \in dy | Y_{0} = x) = \Qq^{m}(x,dy)$ 
%where $\Qq^{m}$
%denotes the $m$-th iterated of $\Qq$ recursively defined by the
%formulae
%\begin{equation*}
%\Qq^0(x,A)=\delta_x(A) \quad \text{and} \quad
%\Qq^{m+1}(x,A)=\int_{\statespace}\Qq(x,dy)\Qq^m(y,A) \quad \forall A\in \mathfrak B.
%\end{equation*}
%%for all $B\in \mathfrak B$.
%It is well known that $\Qq^m$ is a transition probability on $\statespace \times \mathfrak B$. 
where we set
$$\Qq^{m}(x,dy)=\int_{z\in \Ss}\Qq(x,dz)\Qq^{m-1}(z,dy)  \;\; {\rm with}  \;\;  \Qq^0(x,dy) = \delta_x(dy)$$
for the $m$-th iteration of $\Qq$. 

Asymptotic and nonasymptotic studies have shown that the limit law of the Markov chain $Y$ plays an important role, we refer to Bitseki Penda, Djellout and Guillin \cite{BDG14} and references therein for more details (in the general setting of bifurcating Markov chains). In the present work, the tagged-branch Markov chain will play a crucial role in the analysis of the autoregressive functions estimators\footnote{More precisely, we will see that the denominator of \eqref{eq:estimators_functions} converges almost surely to the invariant density of the Markov chain~$Y$ (Proposition~\ref{prop:nuhat_ps}).}.

\subsection{Model contraints} 
%Recall we assume that $\statespace\subset \RR$.
%%%
The autoregressive functions $f_0$ and $f_1$ are devoted to belong to the following class.
For $\gamma \in (0,1)$ and $\ell>0$, we introduce the class $\Ff(\gamma,\ell)$ of continuous functions $f : \statespace \rightarrow \statespace$ such that  
$$|f(x)| \leq \gamma |x| + \ell
$$
for any $x\in\statespace$. \\

%%%
%The following Assumption will be in force throughout the present work:
%\begin{assumption}[Noise density] \label{ass:gepsilon} The probability $\noisedensity$ is absolutely continuous with respect to the Lebesgue measure with continuous density on $\statespace$, also denoted by $\noisedensity$ abusing slightly notation.
%\end{assumption}
%%
%We denote by $G_0$ and $G_1$ the densities with respect to the Lebesgue measure of the probability measures defined by \eqref{eq:gepsmargin}.
%
The two marginals $G_0$ and $G_1$ of the noise density are devoted to belong to the following class. For $r>0$ and $\lambda>2$, we introduce the class $\Gg(r,\lambda)$ of nonnegative continuous functions $g:\statespace\rightarrow [0,\infty)$ such that 
$$
g(x) \leq \frac{r}{1+|x|^\lambda} %\quad {\rm for} \quad |x| \geq r.
$$
for any $x\in\statespace$.
When $(G_0,G_1)\in \Gg(r,\lambda)^2$ for some $\lambda>3$, we denote the covariance matrix of $(\ep_0,\ep_1)$, called noise covariance matrix, by 
\begin{equation}\label{eq:cov}
\Gamma = \begin{pmatrix} \sigma_0^2 & \varrho \sigma_0 \sigma_1 \\ \varrho \sigma_0 \sigma_1  & \sigma_1^2
\end{pmatrix}
\end{equation}
with $\sigma_0\,, \sigma_1>0$ and $\varrho \in (-1,1)$.\\

It is crucial for our proofs to study the ergodicity of the tagged-branch Markov chain $Y$. Geometric ergodicity of nonlinear autoregressive processes has been studied in Bhattacharya and Lee \cite{BL95} (Theorem~1) and also in An and Huang \cite{AH96} and Cline \cite{Cline07}. The main difference is that we need here a precise control on the ergodicity rate,  which should be smaller that $1/2$, due to the binary tree structure. We also see, through \eqref{eq:Y}, that the autoregressive function is random in our case.

The following crucial assumption will guarantee geometric ergodicity of the tagged-branch Markov chain $Y$ with an exponential decay rate smaller than $1/2$ (see Lemma~\ref{lem:ergodicity}).  For any $M>0$, we set
\begin{equation} \label{eq:minorg}
\delta(M) =  \min\Big\{\inf_{|x|\leq M}G_0(x); \inf_{|x|\leq M}G_1(x)\Big\}.
\end{equation}

\begin{assumption}\label{ass:ergodicity2} Set $M_0 = \ell + \EE\big[|\ep_{1}'|\big] < \infty$ where $\ep_{1}'$ has density $(G_0+G_1)/2$.
There exists $\eta >0$ such that $$\gamma<\frac{1}{2}-\eta$$ and there exists $M_1 > 2M_0/(1/2-\eta-\gamma)$ such that
\begin{equation} \label{eq:ergodicity2}
2 M_1 \delta\big((1+\gamma)M_1+\ell\big) > \frac{1}{2}.
\end{equation}
\end{assumption}

The following assumption will guarantee that the invariant density $\nu$ is positive on some nonempty interval (see Lemma~\ref{lem:minorationnu}).  For any $M>0$, we set
$$
\eta(M) = \frac{|G_0|_\infty+|G_1|_\infty}{2} \int_{|y|>M}  \int_{x\in\statespace} \frac{r}{1+\big|y-\gamma |x| -\ell \big|^\lambda \wedge \big| y+\gamma |x|+\ell\big|^\lambda}  dxdy,
$$
where, for a function $h : \statespace\rightarrow \RR$, $|h|_\infty$ stands for $\sup_{x\in\statespace}|h(x)|$.
\begin{assumption}\label{ass:numinor} For $M_2 > 0$ such that $\eta(M_2)<1$, there exists $M_3 > \ell+\gamma M_2$ such that $\delta(M_3)>0$.
\end{assumption}

\subsection{Main results}
We need the following property on $K$:

\begin{assumption} \label{ass:kernel} The kernel $K:\RR \rightarrow \RR$ is bounded with compact support and for some integer $n_0 \geq 1$, we have $\int_{-\infty}^\infty x^kK(x)dx= {\bf 1}_{\{k=0\}}$ for $k=1,\ldots, n_0$.
\end{assumption} 
Assumption~\ref{ass:kernel} will enable us to have nice approximation results over smooth functions $f_0$ and $f_1$, described in the following way: for $\mathcal X \subseteq \mathbb R$ and $\beta>0$, with $\beta=\lfloor \beta\rfloor + \{\beta\}$, $0< \{\beta\} \leq 1$ and $\lfloor \beta\rfloor$ an integer, let $\mathcal H_\mathcal X^\beta$ denote the H\" older space of functions $h:{\mathcal X}\rightarrow \RR$ possessing a derivative of order $\lfloor \beta \rfloor$ that satisfies
\begin{equation} \label{def sob}
|h^{\lfloor \beta \rfloor}(y)-h^{\lfloor \beta \rfloor}(x)| \leq c(h)|x-y|^{\{\b\}}.
\end{equation}
The minimal constant $c(h)$ such that \eqref{def sob} holds defines a semi-norm $|g|_{{\mathcal H}_\mathcal X^\beta}$. We equip the space ${\mathcal H}^\beta_\mathcal X$ with the norm 
$\|h\|_{{\mathcal H}^\beta_\mathcal X} = \sup_{x}|h(x)|+ |h|_{{\mathcal H}_\mathcal X^\beta}$ and the balls
$${\mathcal H}_\mathcal X^\beta(L) = \{h:\mathcal X\rightarrow \RR,\;\|h\|_{{\mathcal H}_\mathcal X^\beta} \leq L\},\;L>0.$$

%\begin{assumption} \label{ass:noise2}
%The marginals of the noise density $G_0$ and $G_1$ belong to $\Hh_S^\beta(L)$ and
%$$
%\max\{|G_0|_S ; |G_1|_S\} < \infty.
%$$
%\end{assumption}

%such that $ \lim_{x\rightarrow \pm \infty} xK(x) = 0$ 

%\begin{assumption}[Regularity of the autoregressive functions]\label{ass:ar_functions}
%%For an integer $\beta \geq1$, the functions $f_{0}$, $f_{1}$ are $\beta$ times differentiable and their $\beta$-th derivative is uniformly bounded.
%\end{assumption}
%$$\Gg^\b_\Dd(M) = \{ \forall x\in \Dd, \noisedensity(x,.) \in \Hh^\b_\Dd(M) \text{ and } \noisedensity(.,x) \in \Hh^\b_\Dd(M) \}$$

The notation $\propto$ means proportional to, up to some positive constant independent of $n$ and the notation $\lesssim$ means up to some constant independent of $n$.

\begin{thm}[Upper rate of convergence] \label{thm:upper_rateBAR} 
%Work under Assumption~\ref{ass:gepsilon}.
%
Let $\gamma\in(0,1/2)$ and $\ell>0$, let $r>0$ and $\lambda>3$.
Specify $( \widehat{f}_{0,n}, \widehat{f}_{1,n})$ with a kernel $K$ satisfying Assumption~\ref{ass:kernel} for some $n_0>0$, with
$$
h_n \propto |\TT_n|^{-1/(2\beta+1)}
$$
and $\varpi_n>0$ such that $\varpi_n \rightarrow 0$ as $n\rightarrow\infty$. For every $L,L'>0$ and $0<\beta<n_0$, for every $\noisedensity$ such that $(G_0,G_1)\in \big( \Gg(r,\lambda)\cap \Hh^{\beta}_\statespace(L')\big)^2$ satisfy Assumptions~\ref{ass:ergodicity2} and~\ref{ass:numinor}, there exists $d = d(\gamma,\ell, G_0,G_1)>0$ such that for every compact interval $\Dd \subset [-d,d]$ with nonempty interior and for every $x$ in the interior of $\Dd$,
%For every $M>0$, for every $0<\beta<n_0$, for every compact interval $\Dd$ and every $x\in \Dd$,
$$
\sup_{(f_0,f_1)} \EE_\mu \Big[ \big( \widehat{f}_{0,n}(x) - f_0(x)\big)^2 +  \big( \widehat{f}_{1,n}(x) - f_1(x)\big)^2 \Big]^{1/2} \lesssim \varpi_n^{-1} |\TT_n|^{-\beta/(2\beta+1)}
$$
where the supremum is taken among all functions $(f_0,f_1) \in \big( \Ff(\gamma,\ell) \cap \Hh^\b_\Dd(L)\big)^2$, for any initial probability measure $\mu(dx)$ on $\statespace$ for $X_\emptyset$ such that $\mu\big((1+|\cdot|)^2\big)<\infty$.
\end{thm}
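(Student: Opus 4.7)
The plan is the classical Nadaraya--Watson bias-variance route, transposed to the tree-indexed setting via the tagged-branch chain. Writing $\widehat{\nu}_n(x)=|\TT_n|^{-1}\sum_{u\in\TT_n}K_{h_n}(x-X_u)$, the estimator admits the decomposition
$$
\widehat{f}_{\iota,n}(x)-f_\iota(x)=\frac{A_{\iota,n}(x)}{\widehat{\nu}_n(x)\vee\varpi_n}+f_\iota(x)\Big(\frac{\widehat{\nu}_n(x)}{\widehat{\nu}_n(x)\vee\varpi_n}-1\Big),
$$
with $A_{\iota,n}(x)=|\TT_n|^{-1}\sum_{u\in\TT_n}K_{h_n}(x-X_u)(X_{u\iota}-f_\iota(x))$. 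Since the denominator of the first term is $\geq \varpi_n$ and the second is supported on $\{\widehat{\nu}_n(x)<\varpi_n\}$, it is enough to prove (a) $\EE_\mu[A_{\iota,n}(x)^2]\lesssim h_n^{2\beta}+(|\TT_n|h_n)^{-1}$ and (b) $\PP_\mu(\widehat{\nu}_n(x)<\varpi_n)$ is small enough that its trimming contribution is absorbed in the main rate.

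For (a), split $X_{u\iota}=f_\iota(X_u)+\ep_{u\iota}$ to get $A_{\iota,n}=B_{\iota,n}+V_{\iota,n}$, where $B_{\iota,n}(x)=|\TT_n|^{-1}\sum_u K_{h_n}(x-X_u)(f_\iota(X_u)-f_\iota(x))$ and $V_{\iota,n}(x)=|\TT_n|^{-1}\sum_u K_{h_n}(x-X_u)\ep_{u\iota}$. Because $\ep_{u\iota}$ is centered and independent of the $\sigma$-field generated by $(X_v)_{|v|\leq|u|}$, the cross product vanishes in expectation, so $\EE_\mu[A_{\iota,n}^2]=\EE_\mu[B_{\iota,n}^2]+\EE_\mu[V_{\iota,n}^2]$. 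For $V_{\iota,n}$, the same independence makes the summands pairwise uncorrelated, and a direct variance computation combined with the many-to-one identity
$$
\EE_\mu\Big[\sum_{u\in\TT_n}\phi(X_u)\Big]=\sum_{m=0}^n 2^m\EE_\mu[\phi(Y_m)]
$$
yields $\EE_\mu[V_{\iota,n}^2]\lesssim(|\TT_n|h_n)^{-1}$, the usual Nadaraya--Watson variance. For $B_{\iota,n}$, the many-to-one identity together with geometric ergodicity of $Y$ at some rate $\rho<1/2$ (Assumption~\ref{ass:ergodicity2} and Lemma~\ref{lem:ergodicity}) allows replacing the law of $Y_m$ by the invariant $\nu$ up to a remainder of order $\sum_m (2\rho)^m/|\TT_n|=O(|\TT_n|^{-1})$. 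The leading term is the standard smoothing bias: by Assumption~\ref{ass:kernel} and the H\"older regularity of $f_\iota$ and $\nu$, Taylor expansion gives $|\EE_\mu[B_{\iota,n}(x)]|\lesssim h_n^\beta$. An analogous many-to-two computation bounds the variance of $B_{\iota,n}$ by $h_n^{2\beta-1}/|\TT_n|\leq (|\TT_n|h_n)^{-1}$, using the uniform bound $\|\nu_m\|_\infty\lesssim \|G_0\|_\infty+\|G_1\|_\infty$ that follows from $G_\iota\in\Gg(r,\lambda)$.

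For (b), Assumption~\ref{ass:numinor} via Lemma~\ref{lem:minorationnu} produces a constant $d=d(\gamma,\ell,G_0,G_1)>0$ and $c_0>0$ such that $\nu\geq c_0$ on $[-d,d]$. Taking $n$ large enough so that $\varpi_n\leq c_0/2$, Chebyshev gives
$$
\PP_\mu(\widehat{\nu}_n(x)<\varpi_n)\leq \PP_\mu(|\widehat{\nu}_n(x)-\nu(x)|\geq c_0/2)\lesssim h_n^{2\beta}+(|\TT_n|h_n)^{-1},
$$
applying the very same bias-variance decomposition of (a) to $\widehat{\nu}_n$. The trimming remainder in $L^2$ loss is then bounded by $|f_\iota(x)|$ times the square root of this quantity, which on the compact $\Dd$ is of the same order as the main term. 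Combining and dividing by $\varpi_n$ gives a bound of order $\varpi_n^{-1}(h_n^\beta+(|\TT_n|h_n)^{-1/2})$; balancing through $h_n\propto|\TT_n|^{-1/(2\beta+1)}$ yields the announced rate $\varpi_n^{-1}|\TT_n|^{-\beta/(2\beta+1)}$.

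The main obstacle is the quantitative many-to-one and many-to-two analysis with an ergodicity rate strict enough to dominate the exponential $2^m$ growth of generations; this is precisely why Assumption~\ref{ass:ergodicity2} forces $\gamma<1/2-\eta$ rather than just $\gamma<1$. Uniformity over $(f_0,f_1)\in(\Ff(\gamma,\ell)\cap\Hh_\Dd^\beta(L))^2$ and over initial measures $\mu$ with finite second moment further requires that the ergodicity constant, the lower bound $c_0$ on $\nu$, and the moment bounds on $\nu_m$ depend only on $(\gamma,\ell,G_0,G_1)$, which is the reason $\Dd$ is restricted to $[-d,d]$ in the statement.
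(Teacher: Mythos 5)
Your proposal follows essentially the same route as the paper's proof: the same bias--variance analysis driven by the many-to-one formulae of Lemma~\ref{lem:Mto1}, the geometric ergodicity of the tagged-branch chain with rate $\rho<1/2$ (Lemma~\ref{lem:ergodicity}, exploited through Lemma~\ref{lem:gagnerh} and the cut-off at $l^{*}\propto|\log h_n|$), the lower bound $\inf_{\Dd}\nu>0$ of Lemma~\ref{lem:minorationnu}, and the crude bound $\widehat{\nu}_n(x)\vee\varpi_n\geq\varpi_n$ producing the $\varpi_n^{-1}$ loss. The only organisational differences are that you recentre the numerator at $f_\iota(x)$, merging into the single term $B_{\iota,n}$ what the paper treats as the two terms $L_{\iota,n}-\nu f_\iota$ and $\widehat{\nu}_n\vee\varpi_n-\nu$, and that you control the truncation through $\PP_\mu\big(\widehat{\nu}_n(x)<\varpi_n\big)$ by Chebyshev, where the paper simply observes that once $\varpi_n\leq\tfrac12\inf_{\Dd}\nu$ one has $|\widehat{\nu}_n(x)\vee\varpi_n-\nu(x)|\leq|\widehat{\nu}_n(x)-\nu(x)|$. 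Both devices give the same orders.

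One intermediate claim is wrong, though harmlessly so: the exact orthogonality $\EE_\mu[A_{\iota,n}^2]=\EE_\mu[B_{\iota,n}^2]+\EE_\mu[V_{\iota,n}^2]$ does not hold. The cross term contains summands $\EE_\mu\big[K_{h_n}(x-X_u)\big(f_\iota(X_u)-f_\iota(x)\big)K_{h_n}(x-X_v)\ep_{v\iota}\big]$ in which $u$ is a strict descendant of $v\iota$; then $X_u$ is a function of $\ep_{v\iota}$ and conditioning on the deeper generation does not annihilate the term. Your independence argument only covers pairs where the noise index is at least as deep as the other index --- which is precisely why the pairwise uncorrelatedness of the summands of $V_{\iota,n}$ \emph{is} correct, but not the $B$--$V$ cross term. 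The fix is immediate: replace the identity by $\EE_\mu[A_{\iota,n}^2]\leq2\,\EE_\mu[B_{\iota,n}^2]+2\,\EE_\mu[V_{\iota,n}^2]$, which is all the rate computation needs. Two further cosmetic points: the ergodicity remainder in the bias of $B_{\iota,n}$ is of order $(|\TT_n|h_n)^{-1}$ rather than $|\TT_n|^{-1}$, because of the $h_n^{-1}$ normalisation of $K_{h_n}$ (still negligible after squaring), and your variance bound $h_n^{2\beta-1}/|\TT_n|$ for $B_{\iota,n}$ should read $h_n^{2(\beta\wedge1)-1}/|\TT_n|$ when $\beta>1$, since $f_\iota(\cdot)-f_\iota(x)$ is only Lipschitz-small on the kernel window; in both cases the stated consequence $\lesssim(|\TT_n|h_n)^{-1}$ is what matters and remains true.
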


Some comments are in order: \\

\noindent {\bf 1)}~The noise density $G$ should be at least as the autoregressive functions $f_0$ and $f_1$, in order to obtain the rate $|\TT_n|^{-\beta/(2\beta+1)}$. Recall now that we build Nadaraya-Watson type estimators: the estimator $\widehat{f}_{\iota,n}$ is the quotient of an estimator of $(\nu f_{\iota})$ and an estimator of $\nu$. Thus the rate of convergence depends not only on the regularity of $f_\iota$ but also on the regularity of $\nu$. Note that $G$ determines the regularity of the mean transition $y \leadsto \Qq(x,y)$ (see \eqref{eq:QtransBAR}) and thus the regularity of the invariant measure $\nu$. A more general result would establish the rate $|\TT_n|^{-\beta'/(2\beta'+1)}$ with $\beta'$ the minimal regularity between $f_\iota$ and $\nu$ (and thus between $f_\iota$ and $G$ the noise density).
\noindent {\bf 2)}~The autoregressive functions $f_0$ et $f_1$ should be locally smooth, on $\Dd$ a vicinity of $x$ (and not necessarily globally smooth, on $\RR$). Note that we could also state an upper bound for $\EE_\mu \big[ \int_\Dd \big( \widehat{f}_{0,n}(x) - f_0(x)\big)^2 +  \big( \widehat{f}_{1,n}(x) - f_1(x)\big)^2 dx \big]$.
\noindent {\bf 3)}~Up to the factor $\varpi_n^{-1}$, we obtain the classical rate $|\TT_n|^{-\beta/(2\beta+1)}$ where $|\TT_n|$ is the number of observed couples $(X_u,X_{u\iota})$. We know it is optimal in a minimax sense in a density estimation framework and we can infer this is optimal in our framework too. To prove it is the purpose of Theorem~\ref{thm:lower_rateBAR} which follows.
%
%\noindent  {\bf 3)}~The threshold $\varpi_n$ should be chosen such that it inflates the upper-rate of convergence of a slow factor only. Typically, $\varpi_n = (\ln n)^{-1}$ is suitable. Looking carefully at the proof, see \eqref{eq:varpi}, we actually see that $\varpi_n \rightarrow 0$ as $n\rightarrow\infty$ is not necessary. One could choose, $\varpi_n = \varpi$ with
%$$
%\varpi = \frac{1}{2} \inf_{(f_0,f_1)} \inf_{x\in \Dd} \nu(x) > 0
%$$ 
%where the infimum is taken among all $(f_0,f_1)\in\Ff(\gamma,\ell)^2$ and where $\varpi>0$ is guaranteed by Lemma~\ref{lem:minorationnu}. However, to calibrate in practice $\varpi_n$ in such a way is not possible since we cannot compute~$\varpi$. \\ %This remark is also valid for the theorems of Section~\ref{sec:asympto}.
%%
%
\noindent  {\bf 4)}~We do not achieve adaptivity in the smoothness of the autoregressive functions since our choice of bandwidth $h_n$ still depends on $\beta$. For classical autoregressive models ({\it i.e.} nonbifurcating), adaptivity is achieved in a general framework in the early work by Hoffmann \cite{H6}, and we also refer to Delouille and van Sachs  \cite{Delouille}. %This has been also investigated by Delouille and van Sachs  \cite{Delouille} and they propose there a protocol with design-adapted wavelets.\\
\noindent {\bf 5)}~For the sake of simplicity, we have picked a common bandwidth $h_n$ to define the two estimators, but one can immediately generalize our study for two different bandwidths $(h_{\iota,n} = |\TT_n|^{-\beta_\iota/(2\beta_\iota+1)},\iota\in\{0,1\})$ where $\beta_\iota$ is the H\"older smoothness of $f_\iota$.\\

We complete the previous theorem with
%%%
\begin{thm}[Lower rate of convergence] \label{thm:lower_rateBAR} Assume the noise density $\noisedensity$ is a bivariate Gaussian density. 
Let $\Dd\subset \statespace$ be a compact interval. 
For every $\gamma\in (0,1)$ and every positive $\ell, \beta, L$, there exists $C>0$ such that, for every $x\in\Dd$, 
$$
\liminf_{n\rightarrow \infty} \inf_{(\widehat{f}_{0,n},  \widehat{f}_{1,n})} \, \sup_{(f_0,f_1)} \PP \Big(  |\TT_n|^{\beta/(2\beta+1)}   \big(\big|\widehat{f}_{0,n}(x) - f_0(x)\big| +  \big| \widehat{f}_{1,n}(x) - f_1(x)\big|  \big) \geq C \Big) > 0,
$$
where the supremum is taken among all functions $(f_0,f_1) \in \big( \Ff(\gamma,\ell) \cap \Hh^\b_\Dd(L) \big)^2$ and the infimum is taken among all estimators based on $(X_u,u\in\TT_{n+1})$.
\end{thm}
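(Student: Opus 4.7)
The plan is to apply Le Cam's two-hypothesis method with a localized bump perturbation. Fix $x \in \Dd$. Pick $\phi : \RR \to \RR$ smooth and compactly supported with $\phi(0) > 0$ and $|\phi|_{\Hh^\beta_\RR} < L/2$, and set $h_n = c\,|\TT_n|^{-1/(2\beta+1)}$ for a constant $c>0$ to be tuned. Consider the two hypotheses
$$
(f_0^{(0)}, f_1^{(0)}) = (0,0), \qquad (f_0^{(1)}, f_1^{(1)}) = (\phi_n, 0), \quad \phi_n(y) = h_n^{\beta}\,\phi\big((y-x)/h_n\big).
$$
By the standard rescaling identity, $|\phi_n|_{\Hh^\beta_\Dd} = |\phi|_{\Hh^\beta_\RR}$ while $|\phi_n|_\infty = h_n^\beta|\phi|_\infty \to 0$, so for $n$ large both pairs belong to $\big(\Ff(\gamma,\ell) \cap \Hh^\beta_\Dd(L)\big)^2$; the growth constraint $|\phi_n(y)| \leq \gamma|y|+\ell$ is trivial since $|\phi_n|_\infty \to 0 \leq \ell$. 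The separation at $x$ equals
$$
\big|f_0^{(1)}(x) - f_0^{(0)}(x)\big| = h_n^\beta\,\phi(0) = c^\beta \phi(0)\,|\TT_n|^{-\beta/(2\beta+1)}.
$$

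The key step is to bound the Kullback--Leibler divergence between the two laws $\PP_0,\PP_1$ of the observations $(X_u, u \in \TT_{n+1})$. The joint density factorizes along the tree into a product over $u \in \TT_n$ of the bivariate Gaussian conditional densities $G\big(x_{u0} - f_0(x_u), x_{u1}-f_1(x_u)\big)$. Since only $f_0$ is perturbed and $G$ is Gaussian with covariance $\Gamma$, the conditional KL given $X_u$ equals $\phi_n(X_u)^2/\big(2\sigma_0^2(1-\varrho^2)\big)$, so by the tensorization of KL along the tree,
$$
\mathrm{KL}(\PP_0 \,\|\, \PP_1) = \frac{1}{2\sigma_0^2(1-\varrho^2)}\sum_{u \in \TT_n}\EE_0\big[\phi_n(X_u)^2\big].
$$
Under the null choice $f_0^{(0)}=f_1^{(0)}=0$, the marginal of $X_u$ for $|u|\geq 1$ is simply $G_0$ or $G_1$, hence bounded by $|G_0|_\infty \vee |G_1|_\infty$; thus $\EE_0[\phi_n(X_u)^2] \lesssim h_n^{2\beta+1}$ by the substitution $z = (y-x)/h_n$. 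Summing yields $\mathrm{KL}(\PP_0\,\|\,\PP_1) \leq C_0\,|\TT_n|\,h_n^{2\beta+1} = C_0\, c^{2\beta+1}$, which can be made arbitrarily small by choosing $c$ small.

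Conclude by Le Cam's lemma together with Pinsker's inequality $\|\PP_0 - \PP_1\|_{TV} \leq \sqrt{\mathrm{KL}/2}$: for $c$ small enough,
$$
\inf_{\widehat{f}_{0,n}}\,\max_{i \in \{0,1\}}\,\PP_i\Big(\big|\widehat{f}_{0,n}(x) - f_0^{(i)}(x)\big| \geq \tfrac12 c^\beta\phi(0)\,|\TT_n|^{-\beta/(2\beta+1)}\Big) \,\geq\, \tfrac12\big(1 - \sqrt{C_0\,c^{2\beta+1}/2}\big) > 0,
$$
uniformly in $n$. Since $f_1^{(0)} = f_1^{(1)} = 0$, the same bound holds a fortiori for the sum $|\widehat{f}_{0,n}(x) - f_0(x)| + |\widehat{f}_{1,n}(x) - f_1(x)|$, and restricting the supremum in the theorem to these two pairs yields the claim with $C = \tfrac12 c^\beta\phi(0)$, uniformly in $x \in \Dd$. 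The only delicate point is the simultaneous verification of the H\"older-ball and growth constraints for both hypotheses, which is handled routinely by the scaling invariance of the H\"older seminorm and the fact that $|\phi_n|_\infty \to 0$; the Gaussian noise together with the degenerate null choice $f_0^{(0)}=f_1^{(0)}=0$ makes the KL computation completely explicit and bypasses any ergodicity issue for the tagged-branch chain.
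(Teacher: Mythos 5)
Your proof is correct and takes essentially the same route as the paper: a two-point Le Cam argument with a localized bump of amplitude $h_n^{\beta}$ at scale $h_n \propto |\TT_n|^{-1/(2\beta+1)}$, an explicit Gaussian Kullback--Leibler computation giving $\mathrm{KL} \lesssim \sum_{u\in\TT_n}\EE\big[(\text{bump})^2(X_u)\big] \lesssim |\TT_n|h_n^{2\beta+1} = c^{2\beta+1}$, and Pinsker's inequality, with the bump amplitude constant tuned small. The only (harmless) difference is your choice of the degenerate null $f_0=f_1=0$ with a perturbation of $f_0$ alone, which makes the marginals of the $X_u$ explicit noise densities, whereas the paper perturbs both functions around a generic $f^*$ and bounds $\EE\big[(f^*-f^*_n)^2(X_u)\big]$ through the many-to-one formula and the boundedness of the Gaussian transition kernel $\Qq_{f^*}$.
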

%%%
This result obviously implies a lower rate of convergence for the mean quadratic loss at point~$x$. We see that in a regular case, the Gaussian case, the lower and upper rates match.

\section{\textsc{Asymmetry test}} \label{sec:asympto}
Testing in the context of nonparametric regression is a crucial point, especially in applied contexts. 
The question of no effect in nonparametric regression is early addressed in Eubank and LaRiccia \cite{Eubank93}. 
We may also want to compare two regression curves nonparametrically and we refer to Munk \cite{Munk98} and references therein. 
Specific tools have been developed to compare time series %, in particular through autocorrelation functions 
(see for instance the recent work by Jin \cite{Jin11} among many others).
The test we propose to study asymmetry in a NBAR process is based on the following asymptotic study.

\subsection{Preliminaries: asymptotic behaviour} \label{sec:asymptoticstudy}
%
%\subsubsection*{Asymptotic normality}
The almost-sure convergence of the autoregressive functions estimators is obtained in Proposition~\ref{prop:asymptotic_as} for any $h_n \propto |\TT_n|^{-\a}$ with $\a \in (0,1)$. Choosing $\a\geq 1/(2\beta+1)$, the estimator $\big(\widehat{f}_{0,n}(x), \widehat{f}_{1,n}(x)\big)$ recentered by $\big(f_0(x),f_1(x)\big)$ and normalised by $\sqrt{|\TT_n| h_n}$ converges in distribution to a Gaussian law. Depending on $\a > 1/(2\beta+1)$ or not, the limit Gaussian law is centered or not, as we state in Theorems~\ref{thm:asymptotic_normality} and~\ref{thm:asymptotic_normality2}.
\begin{prop}[Almost sure convergence] \label{prop:asymptotic_as}
In the same setting as in Theorem~\ref{thm:upper_rateBAR} with $h_n \propto |\TT_n|^{-\alpha}$ for $\a \in (0,1)$,
%Work under Assumptions~\ref{ass:gepsilon},~\ref{ass:ergodicity2} and~\ref{ass:noise}. For every $(f_0,f_1)\in\Ff(\gamma,\ell)$, for $x\in \Dd$,  $h_n = |\TT_n|^{-\alpha}$ with $\a \in (0,1)$ and all sequence $\varpi_n$ such that $\varpi_n \rightarrow 0$ as $n \rightarrow \infty$,
\begin{equation*}
\begin{pmatrix}\widehat{f}_{0,n}(x)\\ \widehat{f}_{1,n}(x)\end{pmatrix} \rightarrow \begin{pmatrix}f_{0}(x)\\ f_{1}(x)\end{pmatrix} , \quad \PP_\mu - a.s.%\, \text{ as } \, n \rightarrow \infty.
\end{equation*}
as $n\rightarrow + \infty$.
\end{prop}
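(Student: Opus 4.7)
The strategy is to handle the numerator and denominator of the Nadaraya-Watson ratio separately. Set
$$\widehat{\nu}_n(x) = |\TT_n|^{-1} \sum_{u\in\TT_n} K_{h_n}(x-X_u), \qquad \widehat{N}_{\iota,n}(x) = |\TT_n|^{-1} \sum_{u\in\TT_n} K_{h_n}(x-X_u)\, X_{u\iota},$$
so that $\widehat{f}_{\iota,n}(x) = \widehat{N}_{\iota,n}(x) / \big(\widehat{\nu}_n(x) \vee \varpi_n\big)$. The footnote after~\eqref{eq:QtransBAR} announces the almost-sure convergence $\widehat{\nu}_n(x) \to \nu(x)$ (Proposition~\ref{prop:nuhat_ps}), where $\nu$ is the invariant density of the tagged-branch chain $Y$; together with Lemma~\ref{lem:minorationnu} under Assumption~\ref{ass:numinor}, which ensures $\nu(x)>0$ for $x$ in (the interior of) $\Dd$, and with $\varpi_n\to 0$, this forces $\widehat{\nu}_n(x) \vee \varpi_n \to \nu(x) > 0$ almost surely. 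Hence the whole problem reduces to proving $\widehat{N}_{\iota,n}(x) \to \nu(x)\, f_\iota(x)$ almost surely.

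To this end, invoke the NBAR recursion $X_{u\iota} = f_\iota(X_u) + \ep_{u\iota}$ to split
$$\widehat{N}_{\iota,n}(x) = |\TT_n|^{-1} \sum_{u\in\TT_n} K_{h_n}(x-X_u)\, f_\iota(X_u) \; + \; |\TT_n|^{-1} \sum_{u\in\TT_n} K_{h_n}(x-X_u)\, \ep_{u\iota}.$$
The first, \emph{signal}, piece is treated exactly as $\widehat{\nu}_n(x)$ but with $\nu$ replaced by $\nu f_\iota$: its expectation converges to $\nu(x) f_\iota(x)$ as $h_n\to 0$ by continuity of $f_\iota$ and smoothness of $\nu$ (inherited from $G$ via~\eqref{eq:QtransBAR}) combined with Assumption~\ref{ass:kernel}, while the stochastic fluctuation is controlled by moment and deviation inequalities for tree sums of bifurcating Markov chains~\cite{Guyon,BDG14,BPEBG15}, whose applicability is guaranteed by the geometric ergodicity of $Y$ with rate strictly less than $1/2$ (Assumption~\ref{ass:ergodicity2}), so that the ergodic decay beats the $2^n$ growth of $|\TT_n|$. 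For the second, \emph{noise}, piece, Definition~\ref{def:NBAR} implies that $\ep_{u\iota}$ is $\Ff_{|u|+1}$-measurable, independent of $\Ff_{|u|}$ and centered, so that $\EE[K_{h_n}(x-X_u)\,\ep_{u\iota}\,|\,\Ff_{|u|}] = 0$; for distinct $u,u'\in\GG_m$, $\ep_{u\iota}$ and $\ep_{u'\iota}$ are independent. Grouping the sum generation by generation then yields a martingale difference sequence in $m$, whose conditional quadratic variation is bounded thanks to $|K|_\infty<\infty$ and $\sigma_\iota^2<\infty$ (coming from $(G_0,G_1)\in\Gg(r,\lambda)^2$ with $\lambda>3$), giving a second-moment bound of order $(|\TT_n|h_n)^{-1}$.

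The delicate point is to upgrade these $L^2$-type controls to \emph{almost-sure} convergence for a bandwidth as slow as $h_n\propto |\TT_n|^{-\a}$ with arbitrary $\a\in(0,1)$. Since $|\TT_n|h_n \propto |\TT_n|^{1-\a}$ grows only polynomially in $n$, a second-moment bound combined with Markov's inequality is not Borel-Cantelli summable. I would resolve this either by pushing the moment bound to order $2p$ for $p$ large enough that $\sum_n |\TT_n|^{-p(1-\a)}<\infty$, or by invoking subgaussian-type deviation inequalities for bifurcating chains in the spirit of~\cite{BPEBG15} of the form $\PP_\mu\big(|\widehat{\nu}_n(x) - \EE_\mu\widehat{\nu}_n(x)|>\epsilon\big) \leq c\exp(-c'|\TT_n|^{1-\a}\epsilon^2)$, plus the analogous bound for each piece of $\widehat{N}_{\iota,n}(x)$. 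In either case Borel-Cantelli concludes. Assembling the three limits and using continuity of the ratio at $\nu(x)>0$ finishes the proof.
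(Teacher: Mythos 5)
Your skeleton is the same as the paper's: write $\widehat f_{\iota,n}(x)=\big(M_{\iota,n}(x)+L_{\iota,n}(x)\big)/\big(\widehat\nu_n(x)\vee\varpi_n\big)$ with the noise term $M_{\iota,n}(x)=|\TT_n|^{-1}\sum_{u\in\TT_n}K_{h_n}(x-X_u)\ep_{u\iota}$ and the signal term $L_{\iota,n}(x)=|\TT_n|^{-1}\sum_{u\in\TT_n}K_{h_n}(x-X_u)f_\iota(X_u)$, control their second moments, and treat the denominator through Proposition~\ref{prop:nuhat_ps} together with $\varpi_n\to0$ and the positivity of $\nu$ on $\Dd$ (Lemma~\ref{lem:minorationnu}). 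Up to that point you are in line with the paper, which simply reuses the bounds \eqref{termM} and \eqref{termL} established in the proof of Theorem~\ref{thm:upper_rateBAR}.

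The genuine flaw is your ``delicate point''. You claim that $|\TT_n|h_n\propto|\TT_n|^{1-\a}$ grows only polynomially in $n$, so that second-moment bounds are not Borel--Cantelli summable. But the index $n$ here is the number of generations, and $|\TT_n|=2^{n+1}-1$ grows \emph{geometrically} in $n$; hence $(|\TT_n|h_n)^{-1}\propto 2^{-(n+1)(1-\a)}$ and $h_n^{2\beta}\propto 2^{-2\a\beta(n+1)}$ are both summable over $n$ for every $\a\in(0,1)$. Consequently $\sum_n\EE_\mu\big[M_{\iota,n}(x)^2\big]<\infty$ and $\sum_n\EE_\mu\big[(L_{\iota,n}(x)-(\nu f_\iota)(x))^2\big]<\infty$, and Chebyshev plus Borel--Cantelli already give the almost-sure limits $M_{\iota,n}(x)\to0$ and $L_{\iota,n}(x)\to(\nu f_\iota)(x)$ $\PP_\mu$-a.s.; this is exactly the paper's (and Proposition~\ref{prop:nuhat_ps}'s) argument, and it is the whole reason the tree setting is more comfortable than classical time series here. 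Your proposed repairs --- order-$2p$ moment bounds for tree sums, or subgaussian deviation inequalities in the spirit of \cite{BPEBG15} --- are therefore unnecessary, and as written they are only announced, not proved: the deviation inequalities you invoke are not available off the shelf in this setting (the discussion in Section~\ref{sec:discussion} points out that such concentration results are obtained under uniform ergodicity, e.g.\ restricting to $\gamma=0$), so the proposal leaves a gap precisely at the step where it departs from the simple summability argument. Replace that paragraph by the observation that $|\TT_n|$ is exponential in $n$ and conclude by Borel--Cantelli, and your proof coincides with the paper's.
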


From now on we need to reinforce the assumption on the noise sequence: we require that the noise $(\varepsilon_{0},\varepsilon_{1})$ has finite moment of order $4$, $\EE\big[\varepsilon_0^4 + \varepsilon_1^4\big]  < \infty$, which is guaranteed by $G\in\Gg(r,\lambda)$ for $\lambda>5$. We use the notation $|K|_2^2 = \int_\statespace K(x)^2 dx$.

\begin{thm}[Asymptotic normality] \label{thm:asymptotic_normality}
In the same setting as in Theorem~\ref{thm:upper_rateBAR} with $\lambda > 5$ and $h_n \propto |\TT_n|^{-\alpha}$ for $\alpha\in \big(1/(2\beta+1),1\big)$,
%Work under Assumptions~\ref{ass:gepsilon},~\ref{ass:ergodicity2},~\ref{ass:noise} and~\ref{ass:noise2}. 
%For every $(f_0,f_1)\in\Ff(\gamma,\ell)$, for $x\in \Dd$, $h_n = |\TT_n|^{-\alpha}$ with $\alpha>1/(2\beta+1)$
%and all sequence $\varpi_n$ such that $\varpi_n \rightarrow 0$ as $n \rightarrow 0$,
\begin{equation*}
\sqrt{|\TT_n|h_n}
\begin{pmatrix} \widehat{f}_{0,n}(x)-f_{0}(x)\\ \widehat{f}_{1,n}(x)-f_{1}(x)\end{pmatrix} 
\toL
\Nn_{2} \big( \boldsymbol{0}_2  , \boldsymbol{\Sigma}_2(x) \big) \quad \text{ with } \quad \boldsymbol{\Sigma}_2(x) = | K |_2^2  \big(\nu(x) \big)^{-1} \Gamma,
\end{equation*}
$\Gamma$ being the noise covariance matrix and $\boldsymbol{0}_2 = (0,0)$.
Moreover, for $x_1,\ldots, x_k$ distinct points in $\Dd$, the sequence %with $\nu(x_l) > 0$ for $l = 1,\ldots,k$
$$
\left( \begin{pmatrix} \widehat{f}_{0,n}(x_l)-f_{0}(x_l)\\ \widehat{f}_{1,n}(x_l)-f_{1}(x_l)\end{pmatrix}  , \, l = 1, \ldots, k \right) 
$$
is asymptotically independent.
\end{thm}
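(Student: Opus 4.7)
The plan is to combine the standard Nadaraya--Watson decomposition with a central limit theorem for sums indexed by the binary tree $\TT_n$, then invoke Slutsky's lemma and asymptotic independence. Introduce
$$V_{\iota,n}(x) = \frac{1}{|\TT_n|} \sum_{u \in \TT_n} K_{h_n}(x-X_u)\, \ep_{u\iota}, \qquad B_{\iota,n}(x) = \frac{1}{|\TT_n|} \sum_{u \in \TT_n} K_{h_n}(x-X_u)\bigl(f_\iota(X_u)-f_\iota(x)\bigr),$$
and let $\widehat\nu_n(x)$ denote the (untruncated) denominator in \eqref{eq:estimators_functions}. Then
$$\widehat f_{\iota,n}(x) - f_\iota(x) = \frac{V_{\iota,n}(x) + B_{\iota,n}(x)}{\widehat\nu_n(x) \vee \varpi_n} + f_\iota(x)\,\Bigl(\frac{\widehat\nu_n(x)}{\widehat\nu_n(x)\vee\varpi_n} - 1\Bigr).$$
Because $\widehat\nu_n(x) \to \nu(x) > 0$ almost surely (by the law of large numbers for the bifurcating Markov chain applied to $\phi \equiv 1$, as alluded to in the footnote following \eqref{eq:estimators_functions}) and $\varpi_n \to 0$, the correction term vanishes eventually and the denominator tends a.s.\ to $\nu(x)$. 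By Slutsky, it suffices to prove the joint CLT $\sqrt{|\TT_n| h_n}\,(V_{0,n}(x)+B_{0,n}(x),\, V_{1,n}(x)+B_{1,n}(x))^T \toL \Nn_2(0, \nu(x)|K|_2^2\, \Gamma)$.

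The bias is standard: Taylor expansion of $f_\iota \in \Hh^\beta_\Dd(L)$ to order $\lfloor\beta\rfloor$ combined with the vanishing moments of $K$ (Assumption~\ref{ass:kernel}) yields $|\EE_\nu[K_{h_n}(x-Y)(f_\iota(Y)-f_\iota(x))]| \lesssim h_n^\beta$, while the variance of $B_{\iota,n}(x)$ is of order $|\TT_n|^{-1} h_n^{2\beta-1}$ by the ergodicity estimates already deployed in Theorem~\ref{thm:upper_rateBAR}. Hence $\sqrt{|\TT_n| h_n}\,|B_{\iota,n}(x)| = O_\PP(\sqrt{|\TT_n| h_n^{2\beta+1}} + h_n^\beta) = o_\PP(1)$, since $h_n \propto |\TT_n|^{-\alpha}$ with $\alpha > 1/(2\beta+1)$.

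The core is the CLT for the noise part. Organize the sum by generation: writing $D_{n,m} = \sum_{u \in \GG_m} K_{h_n}(x-X_u)(\ep_{u0}, \ep_{u1})^T$, one has $|\TT_n|\,(V_{0,n}(x), V_{1,n}(x))^T = \sum_{m=0}^n D_{n,m}$, and $(D_{n,m})$ forms a bivariate martingale difference sequence with respect to $(\Ff_{m+1})_{m\geq 0}$ since $\ep_u$ is centered and independent of $\Ff_{|u|}$. By independence of $(\ep_u)_{u\in\GG_m}$, the conditional covariance is
$$\frac{h_n}{|\TT_n|}\sum_{m=0}^n \EE[D_{n,m} D_{n,m}^T \mid \Ff_m] = \frac{h_n}{|\TT_n|}\,\Gamma \sum_{u \in \TT_n} K_{h_n}(x-X_u)^2 \ \toP\ \nu(x)\,|K|_2^2\, \Gamma,$$
by an ergodic LLN for the bifurcating Markov chain (cf.~\cite{BDG14}) applied to the $n$-dependent test function $y \mapsto h_n K_{h_n}(x-y)^2$, using geometric ergodicity (Lemma~\ref{lem:ergodicity}) and the minoration $\nu(x) > 0$ (Lemma~\ref{lem:minorationnu}). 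Since the last-generation increment $D_{n,n}$ alone carries a positive fraction of the total variance, the classical Lindeberg criterion for martingales fails; I would argue instead by a two-level conditional characteristic-function argument. Conditional on $\Ff_m$, $D_{n,m}$ is a sum of $2^m$ independent centered $\RR^2$-vectors of individual magnitude $O(h_n^{-1})$ and total variance of order $2^m h_n^{-1}$, so for those $m$ with $2^m h_n \to \infty$ the ordinary Lindeberg CLT (using $\EE|\ep|^4 < \infty$, which holds since $\lambda > 5$) provides
$$\EE\bigl[\exp\bigl(i\, t^T (h_n/|\TT_n|)^{1/2}\, D_{n,m}\bigr)\,\big|\,\Ff_m\bigr] = \exp\!\Bigl(-\tfrac12 (t^T\Gamma t) \cdot \frac{h_n}{|\TT_n|}\!\sum_{u\in\GG_m}\! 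K_{h_n}(x-X_u)^2\Bigr)\bigl(1+o_\PP(1)\bigr);$$
the few early generations (bounded $m$, or $2^m h_n \not\to\infty$) contribute a vanishing fraction of the variance and are absorbed in the error. Iterating the tower property from $m=n$ down to $m=0$ aggregates the exponents into the conditional covariance displayed above, which converges to $-\tfrac12 \nu(x)|K|_2^2\, t^T\Gamma t$, yielding the announced bivariate Gaussian law.

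For joint asymptotic independence at distinct points $x_1,\ldots,x_k \in \Dd$, the same scheme applied to the full $2k$-dimensional vector produces off-diagonal $2\times 2$ blocks of the limit covariance controlled by $(h_n/|\TT_n|)\sum_u K_{h_n}(x_l-X_u) K_{h_n}(x_{l'}-X_u)$; for $l\neq l'$ this quantity vanishes identically for $n$ large enough (by compact support of $K$ and $h_n\to 0$), so the limit covariance is block-diagonal with diagonal blocks $\nu(x_l)|K|_2^2\,\Gamma$. The main technical hurdle is the ergodic LLN applied to the $n$-dependent kernel-squared functional whose window shrinks to a point: one needs sharp $L^2$-type estimates for bifurcating Markov chain empirical averages, and this is precisely where the strengthened moment assumption $\lambda > 5$ enters, through $\EE[|\ep|^4] < \infty$ and the resulting variance controls afforded by Lemma~\ref{lem:ergodicity}.
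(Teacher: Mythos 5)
Your overall architecture matches the paper's: the same decomposition of $\widehat f_{\iota,n}(x)-f_\iota(x)$ into a noise term, a bias term involving $f_\iota(X_u)-f_\iota(x)$, and a truncation remainder that vanishes a.s.\ because $\widehat\nu_n(x)\to\nu(x)>0$ and $\varpi_n\to0$; the same treatment of the bias (Taylor expansion plus vanishing moments of $K$, variance killed by ergodicity, negligible under $\alpha>1/(2\beta+1)$); and the same disjoint-support argument for asymptotic independence at distinct points. Where you genuinely diverge is the core CLT for the noise term. The paper refines the filtration to the level of individuals: following Delmas and Marsalle it draws a random order within each generation, turns $\sum_{u\in\TT_n}K_{h_n}(x-X_u)(\ep_{u0},\ep_{u1})$ into a square-integrable martingale triangular array indexed by $k=1,\dots,|\TT_n|$, and applies Duflo's CLT, checking the bracket convergence via Proposition~\ref{prop:nuhat_ps} with kernel $K^2$ and a Lyapunov condition via Proposition~\ref{prop:nuhat_ps} with kernel $K^4$ (this is exactly where $\lambda>5$, i.e.\ $\EE[\ep^4]<\infty$, enters). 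You instead keep the generation-level blocks, correctly observe that the generation-indexed martingale CLT fails Lindeberg (the last generation carries a constant fraction of the variance), and repair this by a conditional characteristic-function argument inside each generation (conditionally i.i.d.-type sums of many small terms) followed by a tower-property aggregation, using the same bracket convergence. Both routes are sound; the paper's buys an off-the-shelf martingale theorem at the price of the random-ordering device, while yours avoids that device at the price of a hands-on Esseen/Lindeberg bookkeeping of random multiplicative $o_\PP(1)$ errors under iterated conditioning, which needs to be written carefully (the errors must be controlled by conditional third or fourth moments that sum to $o_\PP(1)$ across generations). Two small imprecisions, neither fatal: the a.s.\ convergence of the denominator is not a law of large numbers applied to $\phi\equiv1$ but the a.s.\ consistency of the kernel estimator of $\nu$ (the paper's Proposition~\ref{prop:nuhat_ps}, proved by a Bochner-type bias bound plus Borel--Cantelli), precisely because the test function $K_{h_n}(x-\cdot)$ depends on $n$; and your claimed variance order $|\TT_n|^{-1}h_n^{2\beta-1}$ for the bias fluctuation only holds for $\beta\le1$ (for $\beta>1$ the pointwise increment of $f_\iota$ gives $h_n$, and the paper's own bound is $h_n^{\beta}(|\TT_n|h_n)^{-1}$), but any of these bounds suffices for negligibility.
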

The restriction $\alpha>1/(2\beta+1)$ in Theorem~\ref{thm:asymptotic_normality} prevents us from choosing $h_n \propto |\TT_n|^{-1/(2\beta+1)}$, which is the optimal choice to achieve the minimax rate as we have seen in Theorem~\ref{thm:upper_rateBAR}.
The following Theorem remedies to this flaw, but at the cost of an unknown bias. We obtain an explicit expression of this bias for $\beta$ an integer which depends on the $\beta$-th derivatives of the autoregressive functions and the invariant measure of the tagged-branch chain.%{\tt [ETTOFER]}

\begin{thm}[Asymptotic normality with bias expression] \label{thm:asymptotic_normality2}
%In the same setting as in Theorem~\ref{thm:asymptotic_normality}, assume in addition that $\beta$ is an integer.
%For every $(f_0,f_1)\in\Ff(\gamma,\ell)$, for $x\in \Dd$,
In the same setting as in Theorem~\ref{thm:upper_rateBAR} with $\lambda > 5$ and $\beta$ an integer,
\begin{itemize}
\item[(i)] If $h_n^{\beta} \sqrt{|\TT_n| h_n} \rightarrow \kappa$ with $\kappa \in [0,\infty)$ as $n \rightarrow + \infty$, then
\begin{equation*}
\sqrt{|\TT_n|h_n}
\begin{pmatrix} \widehat{f}_{0,n}(x)-f_{0}(x)\\ \widehat{f}_{1,n}(x)-f_{1}(x)\end{pmatrix} 
\toL
\Nn_{2}\left( \kappa \boldsymbol{m}_2(x) , \boldsymbol{\Sigma}_2(x) \right) \quad \text{with } \quad \boldsymbol{\Sigma}_2(x) =  | K |_2^2  \big(\nu(x) \big)^{-1} \Gamma
\end{equation*}
and
$$
\boldsymbol{m}_2(x) = \frac{(-1)^{\beta}}{\beta! \, \nu(x)}  \int_\statespace y^{\beta} K(y) dy \begin{pmatrix} (\nu f_0)^{\beta}(x) - \nu^{\beta}(x)f_0(x) \\ (\nu f_1)^{\beta}(x) - \nu^{\beta}(x) f_1(x)   \end{pmatrix}.
$$
\item[(ii)] If $h_n^{\beta} \sqrt{|\TT_n|h_n} \rightarrow + \infty$ as $n \rightarrow + \infty$, then
$$
h_n^{-\b} \begin{pmatrix} \widehat{f}_{0,n}(x)-f_{0}(x)\\ \widehat{f}_{1,n}(x)-f_{1}(x)\end{pmatrix}  \overset{\PP_\mu}{\longrightarrow}  \boldsymbol{m}_2(x).
$$
\end{itemize}
\end{thm}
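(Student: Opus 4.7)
My approach is to reduce the statement to the structure of Theorem \ref{thm:asymptotic_normality} by carefully tracking the bias term, which is now the leading deterministic piece. Set
$$N_{\iota,n}(x) := |\TT_n|^{-1}\sum_{u\in\TT_n} K_{h_n}(x-X_u) X_{u\iota}, \qquad D_n(x) := |\TT_n|^{-1}\sum_{u\in\TT_n} K_{h_n}(x-X_u),$$
so that on the event $\{D_n(x) \geq \varpi_n\}$,
$$\widehat{f}_{\iota,n}(x) - f_\iota(x) = \frac{N_{\iota,n}(x) - f_\iota(x) D_n(x)}{D_n(x)}.$$
By Proposition \ref{prop:asymptotic_as} and the positivity of $\nu(x)$ guaranteed by Assumption \ref{ass:numinor}, one has $D_n(x)\vee \varpi_n \toas \nu(x)>0$, and Slutsky's lemma lets us replace the random denominator by $\nu(x)$ in all subsequent limit statements.

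Next I would decompose $N_{\iota,n}(x) - f_\iota(x) D_n(x) = M_{\iota,n}(x) + B_{\iota,n}(x)$ into the centered part
$$M_{\iota,n}(x) := \bigl(N_{\iota,n}(x)-\EE_\mu[N_{\iota,n}(x)]\bigr) - f_\iota(x)\bigl(D_n(x)-\EE_\mu[D_n(x)]\bigr)$$
and the deterministic bias $B_{\iota,n}(x) := \EE_\mu[N_{\iota,n}(x)] - f_\iota(x)\EE_\mu[D_n(x)]$. For the fluctuation, writing $X_{u\iota} = f_\iota(X_u) + \varepsilon_{u\iota}$ isolates the contribution driven by the bivariate noises, which are independent of $\Ff_{|u|}$ and have finite fourth moment thanks to $\lambda>5$. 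The same bivariate martingale CLT along generations as in the proof of Theorem \ref{thm:asymptotic_normality} applies \emph{verbatim}: compact support of $K$ takes care of the Lindeberg condition, and the ergodicity of the tagged-branch chain identifies the limiting quadratic characteristic as $|K|_2^2\,\nu(x)\,\Gamma$, whence
$$\sqrt{|\TT_n|h_n}\,\frac{M_{\iota,n}(x)}{\nu(x)} \toL \Nn_2\bigl(\boldsymbol{0}_2, \boldsymbol{\Sigma}_2(x)\bigr).$$

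The new computation is the exact expansion of $B_{\iota,n}(x)$. Conditioning on $\Ff_{|u|}$ and using $\EE[\varepsilon_{u\iota}|\Ff_{|u|}]=0$,
$$B_{\iota,n}(x) = |\TT_n|^{-1}\sum_{u\in\TT_n}\EE_\mu\bigl[K_{h_n}(x-X_u)\bigl(f_\iota(X_u)-f_\iota(x)\bigr)\bigr].$$
The geometric ergodicity of $Y$ with rate $\rho<1/2$ (Lemma \ref{lem:ergodicity}) gives $|\EE_\mu[\varphi(X_u)] - \nu(\varphi)| \lesssim \|\varphi\|_\infty \rho^{|u|}$; applied to $\varphi = K_{h_n}(x-\cdot)(f_\iota(\cdot)-f_\iota(x))$, whose sup-norm is $O(h_n^{-1})$, this yields
$$B_{\iota,n}(x) = \int K_{h_n}(x-y)(f_\iota(y)-f_\iota(x))\nu(y)\,dy + O\bigl(2^{-n}h_n^{-1}\bigr),$$
and the remainder is $o(h_n^\beta)$. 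Writing $g_\iota(y) := (\nu f_\iota)(y) - f_\iota(x)\nu(y)$, which vanishes at $x$, substituting $y = x - h_n u$, Taylor-expanding to order $\beta$, and using the vanishing kernel moments from Assumption \ref{ass:kernel} to kill the intermediate terms, the first non-cancelled contribution gives
$$B_{\iota,n}(x) = \frac{(-h_n)^\beta}{\beta!}\int u^\beta K(u)\,du\,\bigl[(\nu f_\iota)^{(\beta)}(x) - f_\iota(x)\nu^{(\beta)}(x)\bigr] + o(h_n^\beta),$$
which after division by $\nu(x)$ is exactly the $\iota$-th component of $h_n^\beta\,\boldsymbol{m}_2(x) + o(h_n^\beta)$.

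Combining, $\sqrt{|\TT_n|h_n}\bigl(\widehat{f}_{\iota,n}(x)-f_\iota(x)\bigr) = \sqrt{|\TT_n|h_n}\,M_{\iota,n}(x)/\nu(x) + h_n^\beta\sqrt{|\TT_n|h_n}\,\boldsymbol{m}_2(x) + o(1)$. Under (i), $h_n^\beta\sqrt{|\TT_n|h_n}\to\kappa$ delivers the Gaussian limit with shifted mean $\kappa\,\boldsymbol{m}_2(x)$. Under (ii), after normalising by $h_n^{-\beta}$ instead, the stochastic piece is $O_\PP\bigl((h_n^\beta\sqrt{|\TT_n|h_n})^{-1}\bigr) = o_\PP(1)$ while the bias tends to $\boldsymbol{m}_2(x)$, yielding the stated convergence in probability. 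The main technical obstacle I anticipate is justifying the $\beta$-th order Taylor expansion of $g_\iota$: one needs $\nu \in \Hh^\beta$ in a neighbourhood of $x$, which should be derived from the invariance identity $\nu(y) = \tfrac{1}{2}\int\bigl[G_0(y-f_0(z))+G_1(y-f_1(z))\bigr]\nu(z)\,dz$, transferring the $y$-regularity of $G_0,G_1$ directly onto $\nu$.
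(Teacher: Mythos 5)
Your proposal is correct and follows essentially the paper's route: everything is reduced to the machinery behind Theorem~\ref{thm:asymptotic_normality}, and the only genuinely new computation, the exact order-$\beta$ Taylor expansion of the bias for integer $\beta$ with the vanishing kernel moments, is carried out as in the paper and identifies $\boldsymbol{m}_2(x)$ correctly; your handling of case (ii) (stochastic part $O_{\PP}\big((h_n^\beta\sqrt{|\TT_n|h_n})^{-1}\big)=o_{\PP}(1)$, bias converging) coincides with the paper's Step~2, and the denominator is rightly disposed of by $\widehat{\nu}_n(x)\vee\varpi_n\rightarrow\nu(x)$ a.s.\ (this is Proposition~\ref{prop:nuhat_ps} rather than Proposition~\ref{prop:asymptotic_as}) and Slutsky. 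The genuine difference is the bookkeeping of the bias: you center at the true expectation $\EE_\mu[\cdot]$ and then pass to the invariant measure via many-to-one and geometric ergodicity with rate $\rho<1/2$, paying an $O(2^{-n}h_n^{-1})$ correction (convergence of $\sum_m(2\rho)^m$ is exactly what Lemma~\ref{lem:ergodicity} buys), whereas the paper centers the deterministic part directly under $\nu$, so that $N^{(1)}_{\iota,n}(x)=h_n^{-1}\nu(H_n)$ exactly, and shifts the $\mu$-versus-$\nu$ discrepancy into the stochastic term $N^{(2)}_{\iota,n}$. The price of your bookkeeping is the one step you should not leave implicit: your $M_{\iota,n}(x)$ is not only the noise-driven array $|\TT_n|^{-1}\sum_u K_{h_n}(x-X_u)\ep_{u\iota}$, it also contains the $\EE_\mu$-centered fluctuation of $|\TT_n|^{-1}\sum_u K_{h_n}(x-X_u)\big(f_\iota(X_u)-f_\iota(x)\big)$, which is not a martingale increment array, so the martingale CLT does not apply to it \emph{verbatim}. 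Its negligibility at scale $\sqrt{|\TT_n|h_n}$ is precisely Step~1.2 of the proof of Theorem~\ref{thm:asymptotic_normality}: one needs the reinforced bound $|\Qq H_n|\lesssim h_n^{1+\beta}$ (available because $f_\iota(\cdot)-f_\iota(x)$ vanishes at $x$ and $\nu$ and $\Qq(y,\cdot)$ inherit H\"older regularity from $G_0,G_1$), giving $\EE_\mu\big[(N^{(2)}_{\iota,n}(x))^2\big]\lesssim h_n^{\beta}(|\TT_n|h_n)^{-1}$; the crude variance bound of order $(|\TT_n|h_n)^{-1}$ only shows this piece is of the \emph{same} order as the Gaussian term. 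Since you explicitly lean on the proof of Theorem~\ref{thm:asymptotic_normality}, this estimate is available and the argument closes, but it should be invoked by name rather than subsumed under the CLT.
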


\subsection{Construction of an asymmetry test}

Let $x_1,\ldots, x_k$ be distinct points in $\statespace$.
%with $\nu(x_l) > 0$ for $l = 1,\ldots,k$. 
We are going to build a statistical test that allows us to segregate between hypothesis
$$
\Hh_0 : \forall l \in \{ 1, \ldots, k\}, f_0(x_l) = f_1(x_l) \quad \text{vs.} \quad \Hh_1 : \exists l \in\{ 1, \ldots, k\}, f_0(x_l) \neq f_1(x_l).
$$
In the parametric studies on \textit{E. coli} \cite{Guyon, DM10}, these tests are known as detection of cellular aging and they permit to decide if the cell division is symmetric or asymmetric.

\subsubsection*{Construction of an asymptotically unbiased estimator} Inspired by Bierens \cite{Bierens} we define new estimators in order to both achieve the rate $|\TT_n|^{-\beta/(2\beta+1)}$ in the asymptotic normality property and remove the asymptotic bias. Let $\big(\widehat{f}_{\iota,n}^{(a)}(x),\iota\in\{0,1\}\big)$ be the estimators \eqref{eq:estimators_functions} with bandwidth $h_n^{(a)} \propto |\TT_n|^{-1/(2\beta+1)}$ and $\big(\widehat{f}_{\iota,n}^{(b)}(x),\iota\in\{0,1\}\big)$ be the estimators \eqref{eq:estimators_functions} with bandwidth $h_n^{(b)} \propto |\TT_n|^{-\delta/(2\beta+1)}$ for some $\delta\in (0,1)$. 
%We assume that $\beta$ is an integer so that the results of Theorem~\ref{thm:asymptotic_normality2} are valid. 
We define
\begin{equation} \label{eq:Bierensestimator}
%\bar{f}_{\iota,n}(\cdot) = \frac{\widehat{f}_{\iota,n}^{(a)} - |\TT_n|^{\frac{-2(1-\delta)}{2\b+1}} \widehat{f}_{\iota,n}^{(b)}}{1-|\TT_n|^{\frac{-2(1-\delta)}{2\b+1}}}.
\Big( \bar{f}_{\iota,n}(x) = \big( 1-|\TT_n|^{\frac{-\beta(1-\delta)}{2\b+1}} \big)^{-1} \big( \widehat{f}_{\iota,n}^{(a)}(x) - |\TT_n|^{\frac{-\beta(1-\delta)}{2\b+1}} \widehat{f}_{\iota,n}^{(b)}(x)\big), \iota\in\{0,1\} \Big).
\end{equation}
%
%This new estimator, centered by $f_\iota(x)$ and renormalised by $|\TT_n|^{-\b/(2\b+1)}$, converges in distribution to a centered gaussian. 
%
\begin{cor} \label{cor:TCLBierens} In the same setting as in Theorem~\ref{thm:asymptotic_normality2},
\begin{equation*} \label{eq:TCLBierens}
|\TT_n|^{\frac{\b}{2\b+1}}
\begin{pmatrix} \bar{f}_{0,n}(x)-f_{0}(x)\\ \bar{f}_{1,n}(x)-f_{1}(x)\end{pmatrix} 
\toL
\Nn_{2} \big( \boldsymbol{0}_2  , \boldsymbol{\Sigma}_2(x) \big) \quad \text{ with } \quad \boldsymbol{\Sigma}_2(x) = | K |_2^2  \big(\nu(x) \big)^{-1} \Gamma,
\end{equation*}
for every $\delta \in (0,1)$ in the definition \eqref{eq:Bierensestimator} of $\big( \bar{f}_{0,n}(x), \bar{f}_{1,n}(x)\big)$.
\end{cor}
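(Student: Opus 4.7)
My plan is to apply Theorem~\ref{thm:asymptotic_normality2} to each of the two auxiliary estimators appearing in \eqref{eq:Bierensestimator}---noting that they fall in \emph{different} regimes of that theorem---and to exploit the algebraic identity $c_n h_n^{(b)\beta} = h_n^{(a)\beta}$ built into the choice of $c_n$ to cancel the leading bias while retaining the Gaussian fluctuation. Beyond Theorem~\ref{thm:asymptotic_normality2}, only Slutsky's lemma is required.

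First, I identify the two regimes. With $h_n^{(a)} \propto |\TT_n|^{-1/(2\beta+1)}$, a direct computation gives
$$ h_n^{(a)\beta} \sqrt{|\TT_n|\, h_n^{(a)}} \longrightarrow \kappa_a > 0, $$
so case (i) of Theorem~\ref{thm:asymptotic_normality2} applies. Writing $\widehat{f}_n^{(a)}(x) = (\widehat{f}_{0,n}^{(a)}(x),\widehat{f}_{1,n}^{(a)}(x))^\top$ and $f(x) = (f_0(x),f_1(x))^\top$, this yields the vectorial expansion
$$ \widehat{f}_n^{(a)}(x) - f(x) = h_n^{(a)\beta}\, \boldsymbol{m}_2(x) + \big(|\TT_n|\, h_n^{(a)}\big)^{-1/2} W_n^{(a)}, \quad W_n^{(a)} \toL \Nn_2\big(\boldsymbol{0}_2,\boldsymbol{\Sigma}_2(x)\big). $$
Since $\delta\in(0,1)$, the bandwidth $h_n^{(b)} \propto |\TT_n|^{-\delta/(2\beta+1)}$ satisfies $h_n^{(b)\beta} \sqrt{|\TT_n|\, h_n^{(b)}} \propto |\TT_n|^{(1-\delta)/2} \to +\infty$, so case (ii) applies and delivers the in-probability expansion
$$ \widehat{f}_n^{(b)}(x) - f(x) = h_n^{(b)\beta}\big(\boldsymbol{m}_2(x) + o_{\PP_\mu}(1)\big). $$

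Second, I substitute into the Bierens combination. A direct rewrite gives
$$ \bar{f}_n(x) - f(x) = (1-c_n)^{-1}\Big[\big(\widehat{f}_n^{(a)}(x)-f(x)\big) - c_n\big(\widehat{f}_n^{(b)}(x)-f(x)\big)\Big]. $$
Inserting the two expansions and using the key identity $c_n\, h_n^{(b)\beta} = h_n^{(a)\beta}$, the deterministic $\boldsymbol{m}_2(x)$ contributions cancel exactly, leaving
$$ (1-c_n) \big(\bar{f}_n(x) - f(x)\big) = \big(|\TT_n|\, h_n^{(a)}\big)^{-1/2} W_n^{(a)} + c_n h_n^{(b)\beta}\cdot o_{\PP_\mu}(1). $$
Multiplying by $|\TT_n|^{\beta/(2\beta+1)}$ and using $|\TT_n| h_n^{(a)} \propto |\TT_n|^{2\beta/(2\beta+1)}$ together with $c_n h_n^{(b)\beta} = h_n^{(a)\beta} \propto |\TT_n|^{-\beta/(2\beta+1)}$, the first summand converges in distribution to $\Nn_2(\boldsymbol{0}_2,\boldsymbol{\Sigma}_2(x))$ while the second is $o_{\PP_\mu}(1)$. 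Since $c_n \to 0$, $(1-c_n)^{-1}\to 1$ and Slutsky's lemma closes the argument.

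The main obstacle is bookkeeping rather than probabilistic: one must verify that the bias cancellation is \emph{exact} at leading order, which rests on aligning the proportionality constants hidden in $h_n^{(a)}$ and $h_n^{(b)}$ with the precise form of $c_n$ in \eqref{eq:Bierensestimator}---a choice one may make without loss of generality. No revisiting of the martingale or bifurcating-chain machinery underlying Theorem~\ref{thm:asymptotic_normality2} is needed; crucially, the in-probability convergence provided by case (ii) is exactly what lets Slutsky's lemma be applied against the in-distribution convergence of case (i) without any joint-tightness argument.
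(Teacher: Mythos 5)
Your proposal is correct and follows essentially the same route as the paper: apply Theorem~\ref{thm:asymptotic_normality2} in regime (i) to $\widehat{f}^{(a)}_{\iota,n}$ and in regime (ii) to $\widehat{f}^{(b)}_{\iota,n}$, rewrite the Bierens combination so the $\boldsymbol{m}_2(x)$ contributions cancel by construction of the weight $|\TT_n|^{-\beta(1-\delta)/(2\beta+1)}$, and conclude by Slutsky since the second term converges in probability. Your explicit remark on aligning the proportionality constants in $h_n^{(a)}$, $h_n^{(b)}$ is a point the paper glosses over, but it does not change the argument.
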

As announced the trick of Bierens \cite{Bierens} enables us to remove the unknown asymptotic bias while keeping the optimal rate of convergence.

\subsubsection*{Test statistics}  
We define a test statistics based on the new estimators $\big(\bar f_{0,n},\bar f_{1,n}\big)$ by
\begin{equation}\label{eq:test_stat}
W_n(x_1,\ldots,x_k) = \frac{|\TT_n|^{\frac{2\b}{2\b+1}}}{(\sigma_0^2+ \sigma_1^2-2 \sigma_0 \sigma_1\varrho )| K |_2^2} \sum_{l = 1}^k \widehat{\nu}_n(x_l) \big(\bar{f}_{0,n}(x_l) - \bar{f}_{1,n}(x_l)\big)^2
\end{equation}
with $\widehat{\nu}_n(\cdot) = |\TT_n|^{-1} \sum_{u\in \TT_n} K_{h_n}(X_u-\cdot)$
where $h_n \propto |\TT_n|^{-1/(2\beta+1)}$.
\begin{thm}[Wald test for asymmetry] \label{thm:test}
In the same setting as in Theorem~\ref{thm:asymptotic_normality2}, %\ref{ass:kernel},~\ref{ass:geometric_ergodicity},~\ref{ass:ar_functions} and~\ref{ass:noise density}.  Assume the noise sequence $\left(\varepsilon_{u}, u \in \TT \right)$ has a finite moment of order~$4$ and $h_n = |\TT_n|^{-\alpha}$ with $\alpha >1/(2\beta +1)$. 
let $x_1,\ldots, x_k$ be distinct points in $\Dd$.  Then the test statistic $W_n(x_1,\ldots,x_k)$ converges in distribution to the chi-squared distribution with k degrees of freedom $\chi^{2}(k)$, under $\Hh_{0}$, and $\PP_\mu$-almost surely to $+\infty$, under $\Hh_1$.
\end{thm}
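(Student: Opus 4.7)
\medskip

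\noindent\textbf{Proof plan.} The strategy is to decompose $W_n$ under each hypothesis and reduce to Corollary~\ref{cor:TCLBierens} plus a consistent-plug-in argument for $\widehat{\nu}_n$. I would proceed in three steps.

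\emph{Step 1: consistency of the normalising ingredients.} First I would verify that $\widehat{\nu}_n(x_l)$ converges $\PP_\mu$-almost surely to $\nu(x_l)$ for each $l=1,\ldots,k$. This is essentially the content alluded to in footnote after \eqref{eq:QtransBAR} (the denominator of $\widehat{f}_{\iota,n}$ converges to $\nu$), and should already be available as Proposition~\ref{prop:nuhat_ps} with $h_n\propto |\TT_n|^{-1/(2\b+1)}$ lying in the allowed range $h_n\propto |\TT_n|^{-\a}$, $\a\in(0,1)$, of Proposition~\ref{prop:asymptotic_as}.

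\emph{Step 2: behaviour under $\Hh_0$.} Writing
\[
\bar{f}_{0,n}(x_l)-\bar{f}_{1,n}(x_l)=\bigl(\bar{f}_{0,n}(x_l)-f_0(x_l)\bigr)-\bigl(\bar{f}_{1,n}(x_l)-f_1(x_l)\bigr)
\]
(valid under $\Hh_0$ since $f_0(x_l)=f_1(x_l)$), Corollary~\ref{cor:TCLBierens} applied at $x_l$ combined with the continuous mapping theorem $(a,b)\mapsto a-b$ yields
\[
|\TT_n|^{\b/(2\b+1)}\bigl(\bar{f}_{0,n}(x_l)-\bar{f}_{1,n}(x_l)\bigr)\toL \Nn\bigl(0,|K|_2^2\,\nu(x_l)^{-1}(\sigma_0^2+\sigma_1^2-2\varrho\sigma_0\sigma_1)\bigr).
\]
To deal with the sum over $l$, I need joint asymptotic independence of the $k$ vectors
$\bigl(\bar{f}_{0,n}(x_l)-f_0(x_l),\bar{f}_{1,n}(x_l)-f_1(x_l)\bigr)$ for $l=1,\ldots,k$. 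This extends the multipoint independence already stated in Theorem~\ref{thm:asymptotic_normality} to the Bierens combination $\bar{f}_{\iota,n}$; since $\bar{f}_{\iota,n}$ is a fixed linear combination of two Nadaraya--Watson estimators with different bandwidths, the Cram\'er--Wold device together with the joint CLT (for all pairs $(\iota,l)$ and both bandwidths $h_n^{(a)},h_n^{(b)}$) transfers the multipoint independence to $\bar f_{\iota,n}$. Combining the two, $\sqrt{Z_l}:=|\TT_n|^{\b/(2\b+1)}(\bar{f}_{0,n}(x_l)-\bar{f}_{1,n}(x_l))/\bigl(|K|_2^2\nu(x_l)^{-1}(\sigma_0^2+\sigma_1^2-2\varrho\sigma_0\sigma_1)\bigr)^{1/2}$ converges jointly to $k$ i.i.d.\ standard Gaussians. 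Slutsky combined with Step~1 replaces $\nu(x_l)$ by $\widehat{\nu}_n(x_l)$ in the normalisation, and the definition \eqref{eq:test_stat} is precisely $\sum_{l=1}^k Z_l^2$, giving $\chi^2(k)$.

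\emph{Step 3: behaviour under $\Hh_1$.} Pick $l_0$ with $f_0(x_{l_0})\neq f_1(x_{l_0})$. By Proposition~\ref{prop:asymptotic_as} applied to both $\widehat{f}_{\iota,n}^{(a)}$ and $\widehat{f}_{\iota,n}^{(b)}$ (and hence to $\bar{f}_{\iota,n}$), one has $\bar{f}_{\iota,n}(x_{l_0})\toas f_\iota(x_{l_0})$, so $(\bar{f}_{0,n}(x_{l_0})-\bar{f}_{1,n}(x_{l_0}))^2$ converges $\PP_\mu$-almost surely to $(f_0(x_{l_0})-f_1(x_{l_0}))^2>0$, while $\widehat{\nu}_n(x_{l_0})\toas \nu(x_{l_0})>0$ by Step~1 (recalling that $\nu$ is positive on $\Dd$ by Lemma~\ref{lem:minorationnu}). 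Since the prefactor $|\TT_n|^{2\b/(2\b+1)}$ diverges and every other summand in \eqref{eq:test_stat} is nonnegative, $W_n\to+\infty$ $\PP_\mu$-a.s.

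\emph{Main obstacle.} Steps~1 and~3 are routine given the results in Sections~\ref{sec:nonasympto}--\ref{sec:asympto}; the delicate point is the joint asymptotic independence in Step~2, because the Bierens correction mixes two bandwidths of different orders, so one has to run the joint CLT of Theorem~\ref{thm:asymptotic_normality} for the $2k$ kernel averages at $(x_l,h_n^{(a)})$ and $(x_l,h_n^{(b)})$ simultaneously and then take the relevant linear combination. Once that joint CLT is in hand, the conclusion is immediate.
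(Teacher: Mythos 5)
Your overall route is the same as the paper's: under $\Hh_0$, combine Corollary~\ref{cor:TCLBierens} with the multipoint asymptotic independence of Theorem~\ref{thm:asymptotic_normality}, pass to the differences $\bar f_{0,n}(x_l)-\bar f_{1,n}(x_l)$ by a continuous-mapping/Delta-method step, replace $\nu(x_l)$ by $\widehat\nu_n(x_l)$ via Slutsky and Proposition~\ref{prop:nuhat_ps}, and conclude $\chi^2(k)$; under $\Hh_1$, use almost sure convergence of $\bar f_{\iota,n}(x_l)$ and $\widehat\nu_n(x_l)$ (with $\nu>0$ on $\Dd$ by Lemma~\ref{lem:minorationnu}) together with the diverging prefactor $|\TT_n|^{2\beta/(2\beta+1)}$. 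This matches the paper's proof, which is written out for $k=2$.

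The one place where your execution should be corrected is the device you propose for the ``delicate point''. You suggest running the joint CLT of Theorem~\ref{thm:asymptotic_normality} simultaneously for the kernel averages at bandwidths $h_n^{(a)}$ and $h_n^{(b)}$; but Theorem~\ref{thm:asymptotic_normality} requires $\alpha>1/(2\beta+1)$, whereas $h_n^{(b)}\propto|\TT_n|^{-\delta/(2\beta+1)}$ with $\delta\in(0,1)$ is an oversmoothing bandwidth, so no CLT centered at $f_\iota$ at rate $\sqrt{|\TT_n|h_n^{(b)}}$ holds there (the bias explodes at that scale). The correct, and in fact simpler, resolution — the one underlying the proof of Corollary~\ref{cor:TCLBierens} — is that in the Bierens combination \eqref{eq:Bierensestimator} the $h_n^{(b)}$-part only enters through $|\TT_n|^{\delta\beta/(2\beta+1)}\big(\widehat f^{(b)}_{\iota,n}(x_l)-f_\iota(x_l)\big)$, which converges in $\PP_\mu$-probability to the deterministic bias vector by Theorem~\ref{thm:asymptotic_normality2}(ii); since it is a constant in the limit, Slutsky lets you carry the multipoint joint CLT (and hence the asymptotic independence) at the single bandwidth $h_n^{(a)}$ over to the vectors $\big(\bar f_{0,n}(x_l)-f_0(x_l),\bar f_{1,n}(x_l)-f_1(x_l)\big)$, $l=1,\ldots,k$, with no cross-bandwidth Gaussian limit needed. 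With that substitution, your Steps 1--3 give exactly the paper's argument.
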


Note that in \eqref{eq:test_stat} we could replace $\sigma_0^2$, $\sigma_1^2$ and $\varrho$ by 
\begin{equation} \label{eq:varest}
\widehat{\sigma}_{0,n}^2 = |\TT_n|^{-1} \sum_{u \in \TT_n}  (\widehat{\varepsilon}_{u0})^2 \, , \quad \widehat{\sigma}_{1,n}^2 = |\TT_n|^{-1} \sum_{u \in \TT_n}  (\widehat{\varepsilon}_{u1})^2 \, ,\quad \widehat{\varrho}_n =  \widehat{\sigma}_{0,n}^{-1} \widehat{\sigma}_{1,n}^{-1} |\TT_n|^{-1} \sum_{u \in \TT_n}  \widehat{\varepsilon}_{u0} \widehat{\varepsilon}_{u1}
\end{equation}
with the empirical residuals
$ \widehat{\varepsilon}_{u\iota} =   X_{u\iota} - \widehat{f}_{\iota,n}(X_u)$ for  $u \in \TT_n.$ We claim that these estimators are consistent, so that Theorem~\ref{thm:test} is still valid for this new test statistics. Proving the convergence in probability of these three quantities would imply some technical calculations and we do not give here more details.

%%%%%%%%%%%%%%%%%%%%%%%%%%%%%%%%%%%%%%%%%%%%%%%%%%%%%%%%%%%%%%%%%%%%%%%%%%%%%%%%%%%
%%%%%%%%%%%%%%%%%%%%%%%%%%%%%%%%%%%%%%%%%%%%%%%%%%%%%%%%%%%%%%%%%%%%%%%%%%%%%%%%%%%

\section{\textsc{Numerical implementation}} \label{sec:num}

\subsection{Simulated data}

The goal of this section is to illustrate the theoretical results of the previous sections, in particular the results of Theorem~\ref{thm:upper_rateBAR} (Upper rate of convergence) and Theorem~\ref{thm:test} (Wald test for asymmetry).

\subsubsection*{Quality of the estimation procedure.} 
We pick trial autoregressive functions defined analytically by
$$
f_0(x) = x\big(1/4 + \exp(-x^{2})/2\big) \quad \text{and} \quad f_1(x) = x\big(1/8 + \exp(-x^{2})/2\big)
$$ 
for $x \in \statespace$. We take a Gaussian noise with $\sigma_0^2 = \sigma_1^2 = 1$ and $\varrho = 0.3$. 
%Assumptions~\ref{ass:geometric_ergodicity} and~\ref{ass:hairer_mattingly} are thus satisfied. 
We simulate $M = 500$ times a NBAR process up to generation $n + 1 = 15$, with root $X_\emptyset = 1$. 
We take a Gaussian kernel $K(x)= (2\pi)^{-1/2}\exp(-x^2/2)$ and $h_n = |\TT_n|^{-1/5}$ in order to implement estimators given by~\eqref{eq:estimators_functions}. We evaluate $\widehat{f}_{0,n}$ and $\widehat{f}_{1,n}$ on a regular grid of $\mathcal{D} = [-3,3]$ with mesh $\Delta x = |\TT_n|^{-1/2}$. We did not meet any problem with the denominator in practice and actually set $\varpi_n = 0$.  For each sample we compute the empirical error 
\begin{equation*} \label{error def}
e_\iota^{(i)} = \frac{\|\widehat{f}^{(i)}_{\iota,n} - f_\iota\|_{\Delta x}}{\|f_\iota\|_{\Delta x}},\quad i=1,\ldots, M,
\end{equation*}
where $\|\cdot\|_{\Delta x}$ denotes the discrete norm over the numerical sampling. Table~\ref{tab:empiricalerror_BAR} displays the mean-empirical errors together with the empirical standard deviations,
$\overline{e}_\iota =M^{-1}\sum_{i=1}^M e_\iota^{(i)}$ 
and
$\big( M^{-1}\sum_{i=1}^M (e_\iota^{(i)}-\overline{e}_\iota)^2 \big)^{1/2}$
 for $\iota\in\{0,1\}.$
The larger $n$, the better the reconstruction of $f_0$ and $f_1$ as shown in Table~\ref{tab:empiricalerror_BAR}.

%mean_err0 = [0.4442    0.3417    0.2633    0.2006    0.1517    0.1164    0.0891];
%ecarttype_err0 = [0.1509    0.1063    0.0761    0.0558    0.0387    0.0295    0.0209];
%mean_err1 = [0.6696    0.5141    0.4006    0.3027    0.2356    0.1776    0.1384];
%ecarttype_err1 = [0.2482    0.1626    0.1227    0.0831    0.0622    0.0440    0.0326];

%\vspace{1cm}

\begin{table}[h!]
\begin{center}
\begin{tabular}{ccccccccc}
\hline \hline
$\boldsymbol{n}$  &  $8$ & $9$ & $10$ & $11$ & $12$ & $13$ & $14$
\\
%\hline %\hline
$\boldsymbol{|\TT_n|}$  & 511 & 1~023 & 2~047 & 4~095 & 8~191 & 16~383 & 32~767 \\
\hline
$\boldsymbol{\overline{e}_0}$ & 0.4442 & 0.3417 & 0.2633 & 0.2006 & 0.1517 & 0.1285 & 0.0891 \\
%\hline
\textit{sd.} & \textit{\small{0.1509}} & \textit{\small{0.1063}} & \textit{\small{0.0761}} & \textit{\small{0.0558}} & \textit{\small{0.0387}} & \textit{\small{0.0295}} & \textit{\small{0.0209}} \\
\hline
$\boldsymbol{\overline{e}_1}$ & 0.6696 & 0.5141 & 0.4006 & 0.3027 & 0.2356 & 0.1776 &  0.1384 \\
%\hline
\textit{sd.} & \textit{\small{0.2482}} & \textit{\small{0.1626}} & \textit{\small{0.1227}} & \textit{\small{0.0831}} & \textit{\small{0.0622}} & \textit{\small{0.0440}} & \textit{\small{0.0326}} \\
\hline \hline
\end{tabular}
\end{center}
\caption{[Simulated data] {\it Mean empirical relative error $\overline{e}_0$ (resp. $\overline{e}_1$) and its standard deviation computed over $M=500$ Monte-Carlo trees, with respect to $|\TT_n|$, for the autoregressive function $f_0$ (resp. $f_1$) reconstructed over the interval $\mathcal{D} = [-3,3]$ by the estimator $\widehat{f}_{0,n}$ (resp. $\widehat{f}_{1,n}$).} \label{tab:empiricalerror_BAR}
}
\end{table}

%\vspace{0.3cm}

 This is also true at a visual level, as shown on Figure~\ref{fig:IC_BAR} where $95\%$-level confidence  bands are built so that for each point $x$, the lower and upper bounds include $95\%$ of the estimators $(\widehat{f}_{0,n}^{(i)}(x), i = 1\ldots M)$. As one can see on Figure~\ref{fig:IC_BAR}, the reconstruction is good around $0$ and deteriorates for large or small~$x$.  Indeed the invariant measure estimator shows that its mass is located around $0$ and thus more observations lie in this zone, which ensures a better reconstruction there. The same analysis holds for the reconstruction of $f_1$, see the thin blue lines. \\

%$|K|_2^2 = 1/(2\sqrt{\pi})$
\begin{figure}[h!]
\includegraphics[width=7.4cm]{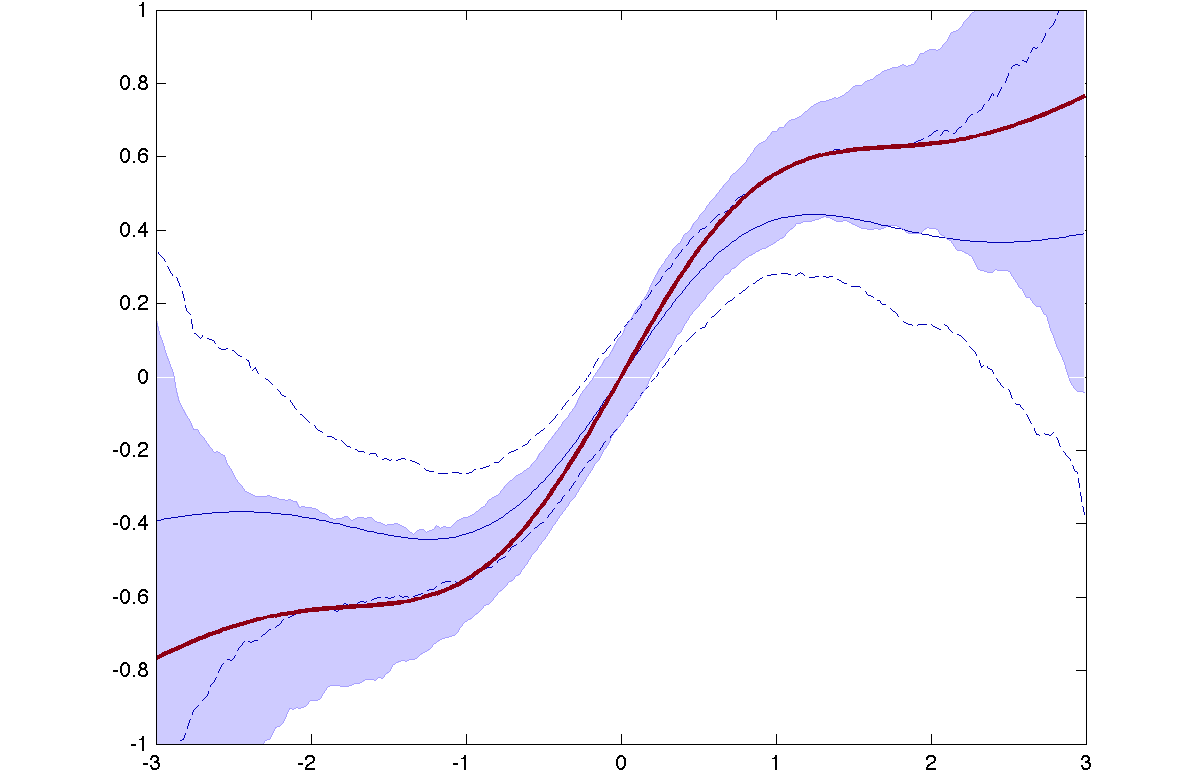}
\includegraphics[width=7.4cm]{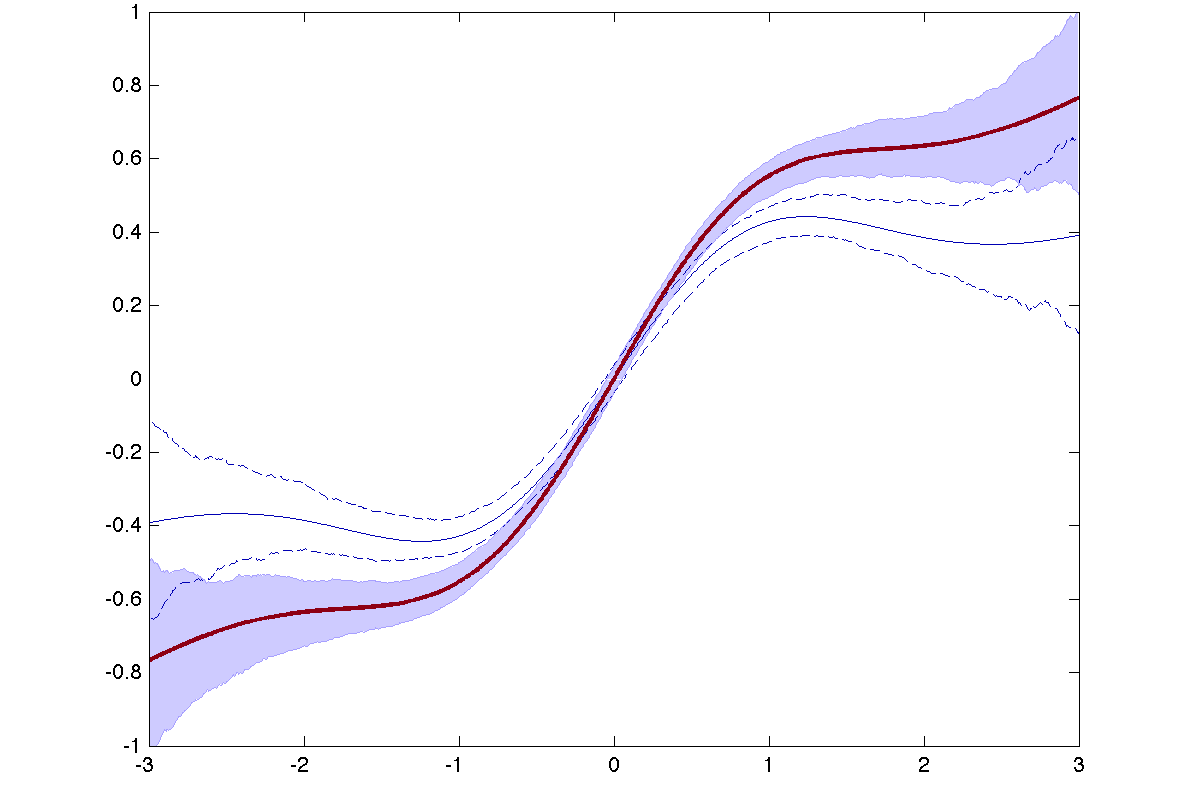}
\caption{[Simulated data]  {\it Reconstruction of $f_0$ over $\mathcal{D}= [-3,3]$ with $95\%$-level confidence bands constructed over $M = 500$ Monte Carlo trees. In bold red line: $x \leadsto f_0(x)$. In thin blue lines: reconstruction of $f_1$ with $95\%$-level confidence bands. Left: $n=10$ generations. Right: $n=14$ generations.} \label{fig:IC_BAR}} %Left: $|\TT_n| = 1023$. Right: $|\TT_n| = 16283$.
\end{figure}

The error is close to $|\TT|^{-2/5}$ for both $\widehat{f}_{0,n}$ and $\widehat{f}_{1,n}$ as expected: indeed, for a kernel of order $n_0$, the bias term in density estimation is of order $h^{\beta\wedge(n_0+1)}$. For the smooth $f_0$, $f_1$ and $\nu$ we have here, we rather expect for the rate $|\TT_n|^{-(n_0+1)/(2(n_0+1)+1)} = |\TT|^{-2/5}$ for the Gaussian kernel with $n_0=1$ that we use here, and this is consistent with what we observe on Figure~\ref{fig:erreurBAR}.\\

\begin{figure}[h!]
\centering
\includegraphics[width=8cm]{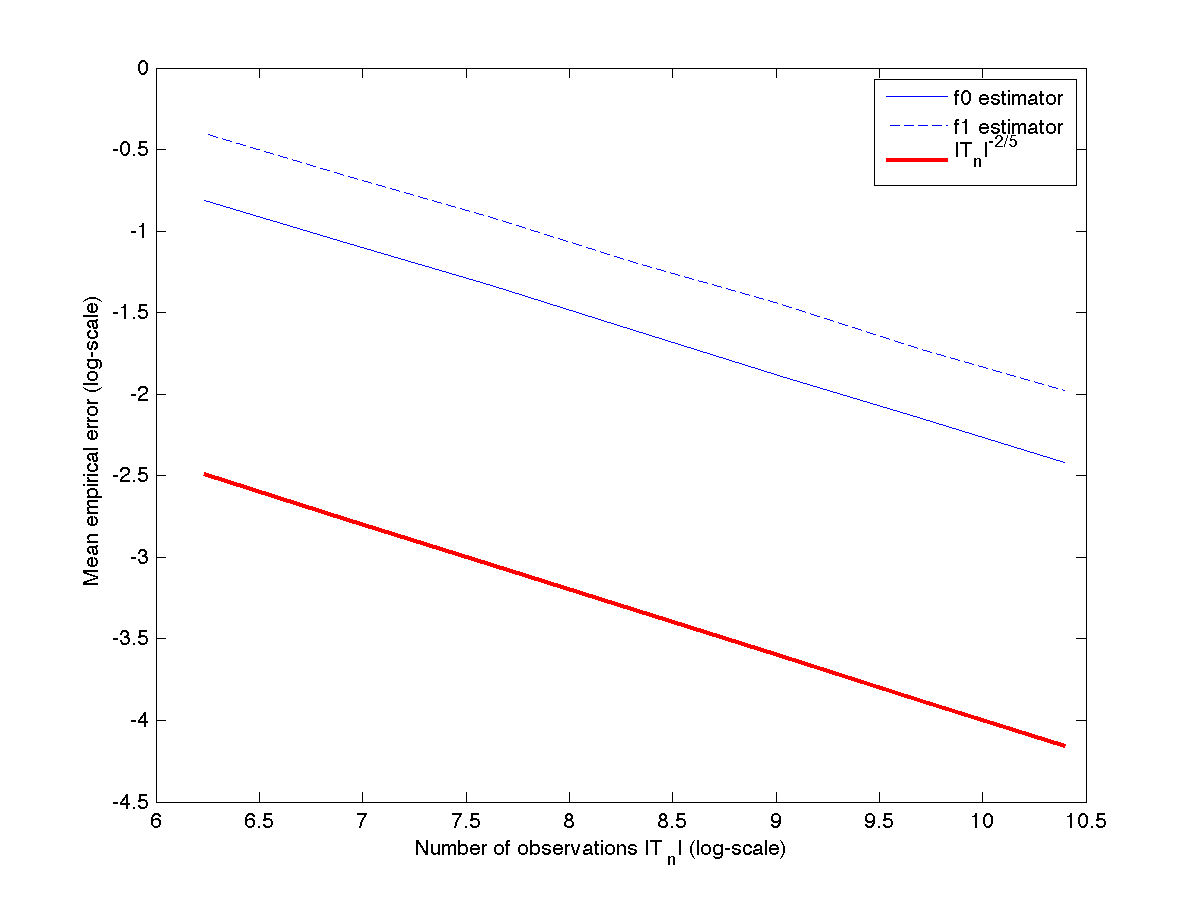}
\caption{[Simulated data]  {\it The log-average relative empirical error over $M = 500$ Monte Carlo trees vs. $\log(|\TT_n|)$ for $f_0$ (resp. $f_1$) reconstructed over $\mathcal{D} = [-3,3]$ by $\widehat{f}_{0,n}$ (solid blue line) (resp. $\widehat{f}_{1,n}$ (dashed blue line)) compared to the expected log-rate (solid red line). \label{fig:erreurBAR}}}
\end{figure}

\subsubsection*{Implementation of the asymmetry test.} 
We implement now the estimators \eqref{eq:Bierensestimator} inspired by \cite{Bierens} in order to compute our test statistics \eqref{eq:test_stat}. We keep a Gaussian kernel and we pick $h_n^{(a)} = |\TT_n|^{-1/5}$ and $h_n^{(b)} = |\TT_n|^{-1/10}$ (\textit{i.e.} $\delta = 1/2$ -- and the choice of $\delta$ proves to have no influence on our numerical results). The numerical study of $\bar{f}_{0,n}$ and $\bar{f}_{1,n}$ leads to similar results as those of the previous study. 
For a given grid $\{x_1,\ldots,x_k\}$ of $\Dd = [-3,3]$, we reject the null hypothesis $\Hh_0$ if $W_n(x_1,\ldots,x_k)$ exceeds the 5\%-quantile of the chi-squared distribution with $k$ degrees of freedom and thus obtain a test with asymptotic level 5\%. We measure the quality of our test procedure computing the proportion of rejections of the null. \\

We first implement the two following cases:
\begin{align*}
(\text{{\bf Case}} \boldsymbol{=}) \quad & f_0(x) = f_1(x) = x(1/4 + \exp(-x^{2})/2), \\
(\text{{\bf Case}} \boldsymbol{\neq}) \quad & f_0(x) = x(1/4 + \exp(-x^{2})/2) \quad \text{and} \quad f_1(x) = x(1/8 + \exp(-x^{2})/2).
\end{align*}
The test should reject $\Hh_0$ in the second case but not in the first one. The larger $n$, the better the test as one can see in Table~\ref{tab:test_BAR}: $\Hh_0$ is more and more often rejected for $(\text{{\bf Case}} \boldsymbol{\neq})$ and less and less often rejected for $(\text{{\bf Case}} \boldsymbol{=})$ as $n$ increases, which is what we expect. We also observe the influence of the number of points of the grid which enables us to build the test statistics. Three grids of $\Dd=[-3,3]$ are tested with $k=13, 25$ and 61 points. %(corresponding to $\Delta x = 0.5, 0.25$ and $0.1$ in Table~\ref{tab:test_BAR}). 
The larger the number of points, the larger the proportion of rejections of $\Hh_0$ in both cases. However, for $(\text{{\bf Case}} \boldsymbol{=})$, more that $n= 14$ generations are needed to reach the theoretical asymptotic level of 5\%. The choice of the number $k$ of points is delicate but we would recommend to use a low $k$ to build the test. 

%The test we have built shows good properties only for a reasonable number $k$ of points, as one can see in Table~\ref{tab:test_BAR}.\\

%For each couple $(f_0,f_{1,\lambda)}$, we compute the proportion of rejections of the null hypothesis $\Hh_0:$ $\{\forall l = 1, \ldots, k, f_0(x_l) = f_{1,\tau}(x_l)\}$ for 5\% asymptotic level tests over $M=500$ Monte-Carlo trees. The test is based on the test statistics $W_n(x_1,\ldots,x_k)$ \eqref{eq:test_stat} with the grid $\{x_l =-3 + (l-1) \Delta x \leq 3; l \geq 1\}$ for $\Delta x = 0.5$.

%\vspace{1cm}
\begin{table}[h]
\begin{center}
\begin{tabular}{clccccccc}
\hline \hline
$\boldsymbol{n}$                   & \multicolumn{1}{l}{} & 8      & 9       & 10      & 11      & 12      & 13     & 14     \\
$\boldsymbol{|\TT_n|}$             & \multicolumn{1}{l}{} & 511 & 1~023 & 2~047 & 4~095 & 8~191 & 16~383 & 32~767      \\
\hline
                                   & $\Delta x = 0.5$  &   46.8\% & 67.2\% & 87.6\%  & 99.0\%    & 100\% &    100\% & 100\%      \\
{\bf Case} $\boldsymbol{\neq}$ & $\Delta x = 0.25$    & 59.6\% & 77.8\%  & 92.8\%  & 99.8\%  & 100\% & 100\% & 100\% \\
                                   & $\Delta x = 0.1$     & 67.8\% & 85.4\%  & 95.6\%  & 99.8\%  & 100\% & 100\% & 100\% \\
\hline
                                   & $\Delta x = 0.5$     & 19.6\% & 18.6\%  & 18.2\%  & 16.2\%  & 13.4\%  & 14.8\% & 12.4\% \\
{\bf Case} $\boldsymbol{=}$   & $\Delta x = 0.25$    & 30.4\% & 30.0\%    & 29.0\%    & 24.8\%  & 21.4\%  & 19.4\% & 19.8\% \\
\multicolumn{1}{l}{}               & $\Delta x = 0.1$     & 42.6\% & 42.6\%  & 40.4\%  & 39.8\%  & 35\%    & 31.6\% & 32.2\% \\
\hline \hline
\end{tabular}
\end{center}
\caption{[Simulated data]  {\it Proportions of rejections of the null hypothesis $\Hh_0:$ $\{\forall l = 1, \ldots, k, f_0(x_l) = f_1(x_l)\}$ for 5\% asymptotic level tests over $M=500$ Monte-Carlo trees. The test is based on the test statistics $W_n(x_1,\ldots,x_k)$ \eqref{eq:test_stat} with the grids $\{x_l =-3 + (l-1) \Delta x \leq 3; l \geq 1\}$ for $\Delta x \in \{0.5;0.25;0.1\}$ ({\it i.e.} $k=13, 25$ and 61 points).  {\rm (}{\bf Case} $\boldsymbol{\neq}${\rm ):} the proportions (power of the test) should be high. {\rm (}{\bf Case} $\boldsymbol{=}${\rm ):} the proportions (type I error) should be low.} \label{tab:test_BAR}
}
\end{table}

%\vspace{-0.75cm}

% Cas \neq :
%res5 = [46.8000   67.2000   87.6000   99.0000  100.0000  100.0000] ;
%res5 = [59.6000   77.8000   92.8000   99.8000  100.0000  100.0000] ;
%res5 = [67.8000   85.4000   95.6000   99.8000  100.0000  100.0000] ;

% Cas = :
%res5 = [19.6000   18.6000   18.2000   16.2000   13.4000   14.8000   12.4000] ;
%res5 = [30.4000   30.0000   29.0000   24.8000   21.4000   19.4000   19.8000] ;
%res5 = [42.6000   42.6000   40.4000   39.8000   35.0000   31.6000   32.2000] ;

\begin{figure}[h!]
\includegraphics[width=7.4cm]{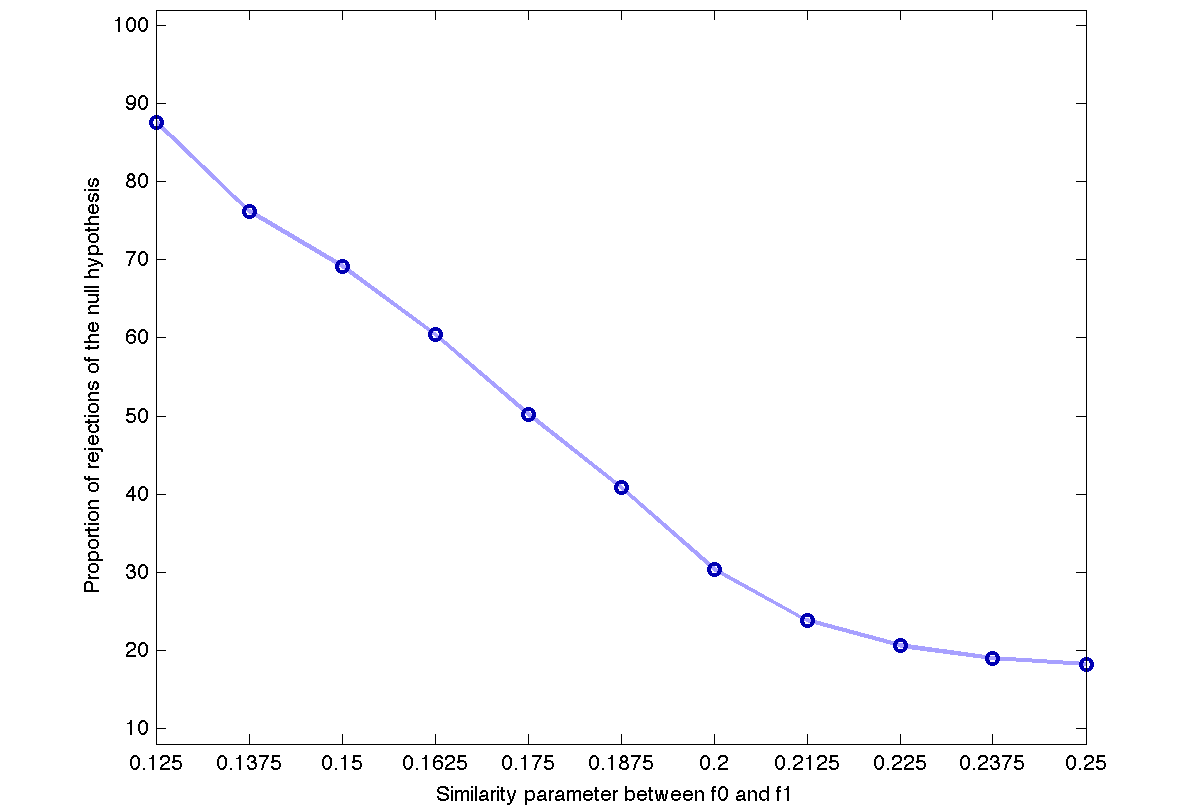}
\includegraphics[width=7.4cm]{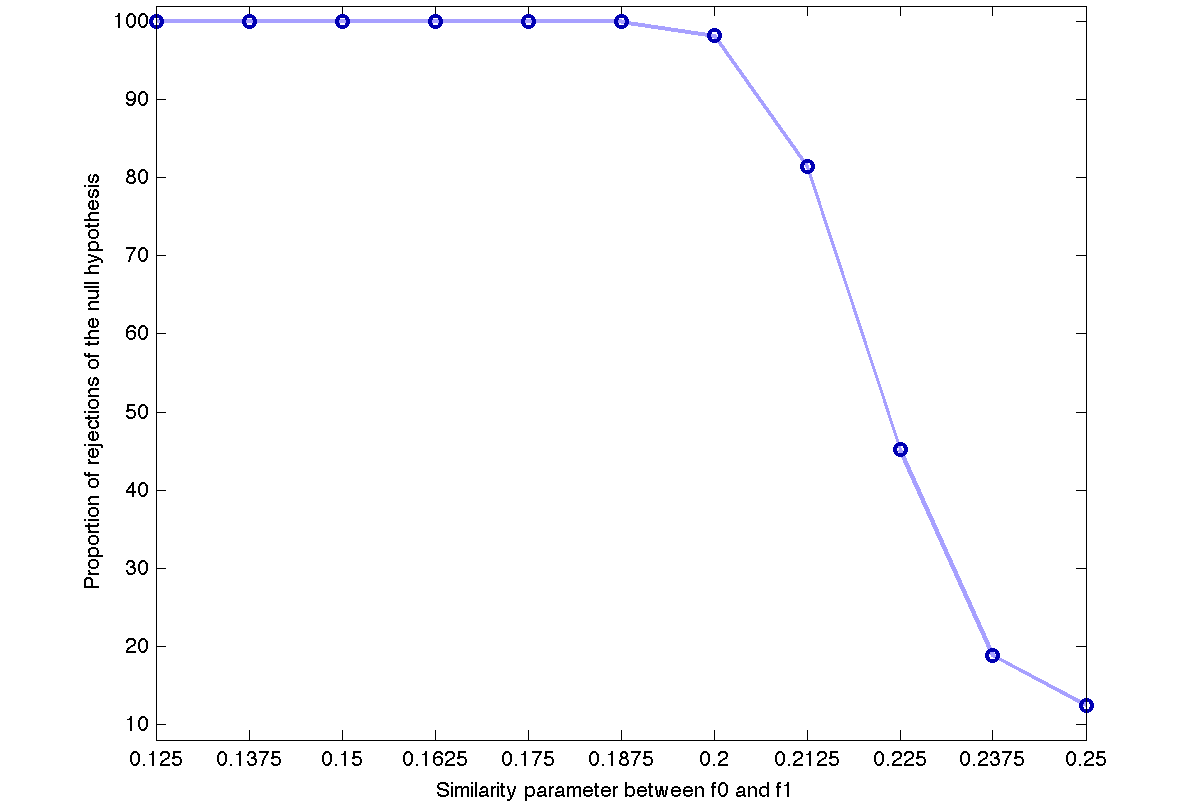}
\caption{[Simulated data]  {\it Proportions of rejections of the null hypothesis $\Hh_0$ (power of the test): $\{\forall l \in\{ 1, \ldots, k\}, f_0(x_l) = f_{1,\tau}(x_l)\}$ with respect to $\tau \in [1/8;1/4]$ for 5\% asymptotic level tests over $M=500$ Monte-Carlo trees. 
The test is based on the test statistics $W_n(x_1,\ldots,x_k)$ \eqref{eq:test_stat} with the grid $\{x_l =-3 + (l-1) \Delta x \leq 3; l \geq 1\}$ for $\Delta x = 0.5$ ({\it i.e.} $k=13$ points). 
Left: $n=10$ generations. 
Right: $n=14$ generations.} \label{fig:puissance_BAR}}
\end{figure}

The second numerical test aims at studying empirically the power of our test. We keep with the same autoregressive function $f_0$ for cells of type 0 and parametrize the autoregressive function for cells of type 1 such that it progressively comes closer to $f_0$:
$$
f_0(x) = x(1/4 + \exp(-x^{2})/2) \quad \text{and} \quad f_{1,\tau}(x) = x(\tau + \exp(-x^{2})/2)
$$
for $\tau\in[1/8,1/4]$.
This choice enables us to interpolate between ({\bf Case} $\boldsymbol{\neq}$) and ({\bf Case} $\boldsymbol{=}$). 

As $\tau$ becomes closer to $1/4$, \textit{i.e.} as $f_{1,\tau}$ becomes closer to $f_0$, we see the decrease of the proportions of rejections of the null on Figure~\ref{fig:puissance_BAR}. The steeper the decrease is, the better performs our test. The proportion of rejections of $\Hh_0$ is higher than 40\% only for $\tau$ up to 0.1875 for a reasonable number of observations ($|\TT_n| = 2~047$ on the left in Figure~\ref{fig:puissance_BAR}). On the right in Figure~\ref{fig:puissance_BAR}, one can see what become the results for a larger number of observations, $|\TT_n| = 32~767$: the performance is good for $\tau$ up to $0.225$, \textit{i.e.} closer to the equality case $\tau = 1/4$.

%\newpage

\subsection{Real data}

Quoting Stewart {\it et al.} \cite{SMPT05}, "The bacterium {\it E. coli} grows in the form of a rod, which reproduces by dividing in the middle. (...) one of the ends of each cell has just been created during division (termed the new pole), and one is pre-existing from a previous division (termed the old pole)."  At each division, the cell inheriting the old pole of the progenitor cell is of type 1, say, while the cell inheriting the new pole is of type 0. %Old poles can exist for many divisions and if cells are followed over time, an age in division can be assigned to each pole, and hence to each cell."
 The individual feature we focus on is the growth rate ({\it E. coli} is an exponential growing cell). Stewart {\it et al.} \cite{SMPT05} followed the growth of a large number of micro-colonies of {\it E. coli}, measuring the growth rate of each cell up to 9 generations (possibly with some missing data).

Recently, concerning the data set of Stewart {\it et al.},  Delyon {\it et al.} \cite{SaportaKrell} found out that "There is no stationarity of the growth rate across generations. This means that the initial stress of the experiment has not the time to vanish during only the first 9 generations." As a consequence, we should not aggregate the data from the different micro-colonies and we shall only work on the largest samples. The largest genealogical tree counts $655$ cells for which we know both the type and the growth rate, together with the type and the growth rate of the progenitor. The autoregressive functions estimators $(\bar f_{0,n}, \bar f_{1,n})$ are represented on Figure~\ref{fig:Stew655}\footnote{We keep on with a Gaussian kernel, the bandwidths are picked proportional to $N^{-1/5}$ and $N^{-1/10}$ ({\it i.e.} $\delta = 1/2$) with $N = 655$, up to a constant fixed using the rule of Silverman. We underline we do not observe the full tree $\TT_9$, and we compute our estimators accordingly. Point-wise confidence intervals of asymptotic level 95\% built using Corollary \ref{cor:TCLBierens} overlap the curves $x\leadsto \bar f_{0,n}(x)$ and $x\leadsto \bar f_{1,n}(x)$, and are far too optimistic (since $n \leq 9$).}. It is surprising that the relationship between the growth rates of the mother and its offspring may be decreasing.  In this case, our nonparametric estimated curves are close to the linear estimated curves (computed using the estimators of Guyon \cite{Guyon}). We show a second example on Figure \ref{fig:Stew446} where the relationship may not be linear.

Previously, Guyon {\it et al.} \cite{proceedingaging} and de Saporta {\it et al.} \cite{dSGPM12, dSGPM14} carried out asymmetry tests, and our conclusions seem to coincide with the previous ones. 
We implement our test statistic \eqref{eq:test_stat} for the largest tree, using 10 equidistant points of $\Dd = [0.0326;0.0407]$ -- where 80\% of the growth rates lie -- using the covariance matrix estimator \eqref{eq:varest}. For the largest tree of 655 cells, our test strongly reject the null hypothesis (p-value $< 10^{-3}$). In the same way, the null hypothesis is strongly rejected for the 10 largest trees available (from 443 cells to 655). Thus, we may conclude to asymmetry in the transmission of the growth rate from one cell to its offspring.
Admittedly, our test does not take into account the influence of missing data and the level of our test for small trees is poor (recall Table \ref{tab:test_BAR}, $(\text{{\bf Case}} \boldsymbol{=})$, Column $n=8$). Thus one should take this conclusion with caution.
%
%

%\begin{figure}[h!]
%\centering
%\includegraphics[width=8cm]{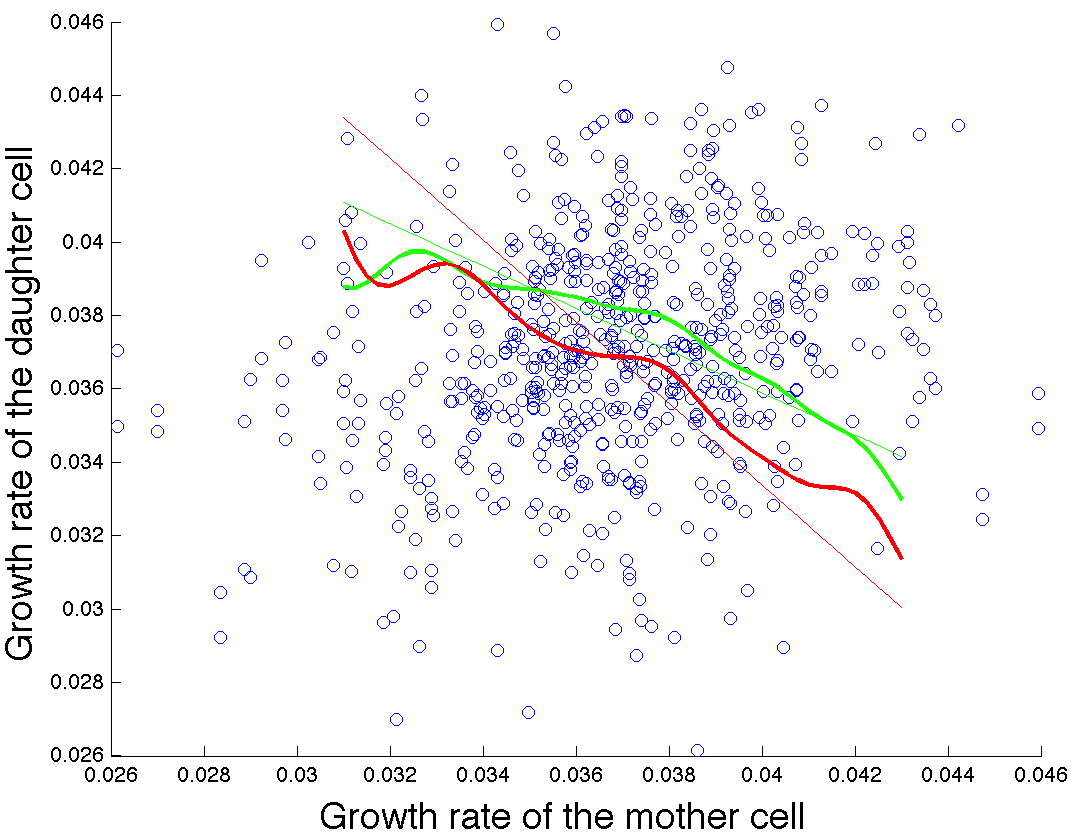}
%\caption{[Real data]  {\it The points $\big((X_u,X_{u0})\big)_{u\in \TT_9^0}$ and $\big((X_u,X_{u1})\big)_{u\in \TT_9^1}$ with $|\TT_9^0| = 325$ and $|\TT_9^1| = 330$ \label{fig:realdata}}}
%\end{figure}

\begin{figure}[h!]
\centering
\subfigure[$N = 655$]{\includegraphics[width=7cm]{GraphiqueStewart1.png} \label{fig:Stew655}}
\subfigure[$N = 446$]{\includegraphics[width=7cm]{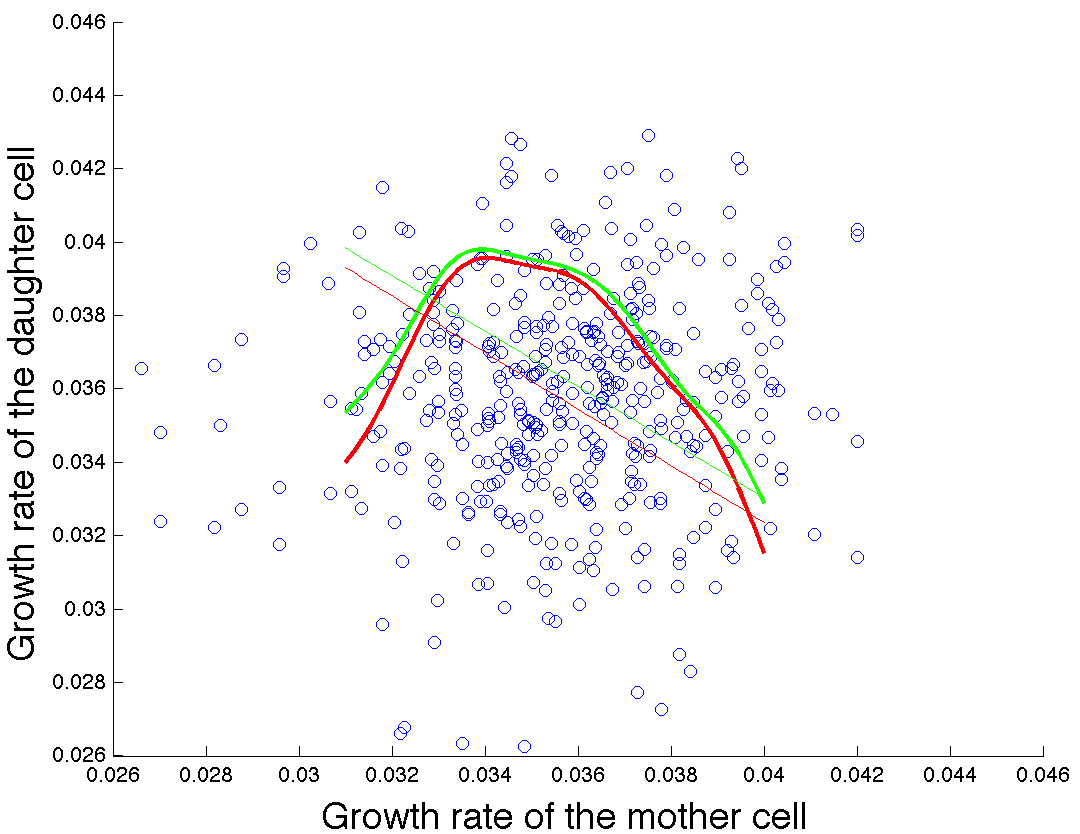} \label{fig:Stew446}}
\caption{[Real data from Stewart {\it et al.} \cite{SMPT05}] {\it  Points  $(X_u,X_{u0})$ and $(X_u,X_{u1})$ for $N$ cells $u\in \TT_9$. Bold green curve (resp. Thin green line): reconstruction of $x\leadsto f_0(x)$ with $x\leadsto \bar f_{0,n}(x)$ (resp. with a linear estimator). Bold red curve (resp.  Thin red line): reconstruction of $x\leadsto f_1(x)$ with $x\leadsto \bar f_{1,n}(x)$  (resp. with a linear estimator).}}
\end{figure}
%

%%%%%%%%%%%%%%%%%%%%%%%%%%%%%%%%%%%%%%%%%%%%%%%%%%%%%%%%%%%%%%%%%%%%%%%%%%%%%%%%%%%
%%%%%%%%%%%%%%%%%%%%%%%%%%%%%%%%%%%%%%%%%%%%%%%%%%%%%%%%%%%%%%%%%%%%%%%%%%%%%%%%%%%

\section{\textsc{Discussion}} \label{sec:discussion}
%{\tt Je pense qu'une discussion solide doit faire deux pages environ, je ne suis qu'\`a une page. Help pour \'etoffer !!}
\subsubsection*{Recursive estimators}
%For the sake of clarity, we have picked a bandwidth that depends on the size of the whole subtree $\TT_{n}$. We could also choose bandwidths that depend on the sizes of the generations. More precisely, 
We could estimate $\big( f_0(x), f_1(x)\big)$ the autoregressive functions at point $x\in\Dd$ from the observations $(X_u)_{ u\in\TT_{n+1}}$ by
\begin{equation*}
\Big( \widehat{f}_{\iota, n}(x) =
\frac{\sum\limits_{m=0}^{n}\sum\limits_{u \in \GG_{m}}  K_{h_m} (x-X_u) X_{u\iota}}{
\sum\limits_{m=0}^{n}\sum\limits_{u \in \GG_{m}} K_{h_m}(x-X_u)} \, ,  \iota \in \{0,1\} \Big)
\end{equation*}
with the collection of bandwidths $(h_{m} = |\GG_{m}|^{-\alpha})_{0 \leq m \leq n}$ for $\alpha\in(0,1)$. 
These estimators can be seen as a version of recursive Nadaraya-Watson estimators when the index set has a binary tree structure. We stress that our results also hold  for this alternative procedure.

\subsubsection*{Heteroscedasticity} Given two functions $\sigma_0,\sigma_1 : \statespace \rightarrow [0,\infty)$, we could consider the generalized autoregressive equations
$$
X_{u0} = f_0(X_u) + \sigma_0(X_u) \varepsilon_{u0} \quad \text{ and } \quad X_{u1} = f_1(X_u) + \sigma_1(X_u)  \varepsilon_{u1}
$$
with $\EE[\varepsilon_{u0}^2] = \EE[\varepsilon_{u1}^2] = 1$ and $\EE[\varepsilon_{u0} \varepsilon_{u1}] = \varrho$ where $\varrho\in(-1,1)$.
Assuming $0 < \inf_{x \in\statespace} \sigma_\iota(x)\leq \sup_{x\in\statespace} \sigma_\iota(x) < \infty$ for $\iota \in \{ 0,1\}$, Theorems~\ref{thm:asymptotic_normality} and~\ref{thm:asymptotic_normality2} still hold with 
$$
\boldsymbol{\Sigma}_2(x) = | K |_2^2 \big(\nu(x)\big)^{-1} \begin{pmatrix} \sigma_0^2(x) & \varrho \sigma_0(x) \sigma_1(x) \\ \varrho \sigma_0(x) \sigma_1(x) & \sigma_1^2(x) \end{pmatrix}.
$$
The estimation of the variance functions $\sigma_0$ and $\sigma_1$ would be interesting but the theoretical study of such estimators lies here beyond the scope of this work.

%%%%\subsubsection*{Adaptive estimation}
%%%%% Wavelets
%%%%% Design adapted methods : citer Delouille
%%%%Our estimators of the autoregressive functions are not adaptive in the smoothness $\beta$ of these two functions since the choice of the bandwidth $h_n = |\TT_n|^{-1/(2\beta+1)}$ still depends on~$\beta$. 
%%%%%
%%%%For classical autoregressive models ({\it i.e.} nonbifurcating), we refer to the early work by Hoffmann \cite{H6}. This has been also investigated by Delouille and van Sachs  \cite{Delouille} and they propose there a protocol with design-adapted wavelets.
%%%%%
%%%%It is well-known that the question of adaptivity boils down to proving deviations inequalities. In our framework, we should prove deviations inequalities for the two empirical means
%%%%$$
%%%%|\TT_n|^{-1} \sum\limits_{u \in \TT_n} K_{h_n}(x-X_u) X_{u\iota} \quad {\rm and} \quad |\TT_n|^{-1} \sum\limits_{u \in \TT_n} K_{h_n}(x-X_u).
%%%%$$
%%%%In the restricted class of bounded autoregressive functions $\Ff(0,\ell)$, with compactly supported noise density $\noisedensity$, the results of \cite{BHO} brings such deviations inequalities. Indeed, on the class $\Ff(0,\ell)$, we can achieve uniform ergodicity, a key assumption in \cite{BHO}. However, on the more general class $\Ff(\gamma, \ell)$ with $\gamma>0$, the problem of deviations inequalities is still open. We recall that on the class $\Ff(\gamma, \ell)$ with $\gamma>0$, only $\VV$-geometric ergodicity with $\VV$ a Lyapunov function is achieved.

\subsubsection*{Uniform test} The asymmetry test we have built is based on the choice of a grid of points on $\statespace$. A theoretical result is needed in order to build a uniform test on a interval $\Dd\subset \statespace$. More precisely, to achieve such a uniform test we should study the asymptotic behaviour of
$$
\sup_{x\in\Dd} \big| \widehat{f}_{\iota,n}(x)-f_{\iota}(x) \big| , \quad \iota\in\{0,1\}.
$$
This asymptotic study lies in the scope of the theory of extrema. One can see the study of Liu and Wu \cite{LiuWu} for autoregressive processes of order 1: an asymptotic Gumbel behaviour is highlighted for the Nadaraya-Watson type estimator of the autoregressive function. 
%One can also see Li \cite{Li} where the aim is to compare the two autoregressive functions of two autoregressive processes of order 1: a test statistics based on estimated residuals is studied there.
Alternatively, studying the limit distribution of
$$
\int_\Dd \big( \widehat f_{\iota, n}(x) - f_{\iota}(x) \big)^2 dx  , \quad \iota\in\{0,1\},
$$
we could derive an other criterion to discriminate between $f_0 = f_1$ and $f_0 \neq f_1$.

%%%%\subsubsection*{Weaker assumption on the noise} More realistic assumptions on the noise sequence, which represent environmental effects, should be considered. As in Bercu, de Saporta and G\'egout-Petit \cite{BdSGP09}, we could assume that the noise is such that, for any $u \in \GG_{m}$,
%%%%$$
%%%%\EE[\ep_u | \Ff_{m-1}] = 0
%%%%$$
%%%%where $\Ff_{m-1}$ is the sigma-field generated by $(X_u , u\in\TT_{m-1})$. This is much weaker than assuming $\big((\ep_{u0},\ep_{u1}), u\in \TT\big)$ is an independent and identically distributed sequence but the Markovian structure of $(X_u , u \in \TT)$ is lost and the techniques we have used in this work are no longer available. 
%%%%%We should instead use a martingale approach and this is a forthcoming work.

\subsubsection*{Moderate deviations principle} 
The work of Bitseki Penda {\it et al.} \cite{BHO} brings deviations inequalities  which  enable us to derive a moderate deviations principle for the estimators $\big(\widehat f_{0,n}(x),\widehat f_{1,n}(x)\big)$.
The results of \cite{BHO} are valid under a uniform ergodicity assumption for the tagged-branch chain, which can be achieved restricting ourselves to the class $\mathcal F(\gamma=0,\ell)$.

%\subsection{Comparison with the linear case}

%%%%%%%%%%%%%%%%%%%%%%%%%%%%%%%%%%%%%%%%%%%%%%%%%%%%%%%%%%%%%%%%%%%%%%%%%%%%%%%%%%%
%%%%%%%%%%%%%%%%%%%%%%%%%%%%%%%%%%%%%%%%%%%%%%%%%%%%%%%%%%%%%%%%%%%%%%%%%%%%%%%%%%%
%\newpage
\section{\textsc{Proofs}} \label{sec:proofs}
%The following notation is used in this section. 
The notation $\lesssim$ means up to some constant independent of $n$ and uniform on the class $(f_0,f_1)\in \Ff(\gamma,\ell)^2$.

For a $\mathfrak B$-measurable function $g : \statespace \rightarrow \RR$ and a measure $\mu$ on $(\statespace,\mathfrak B)$ we define  $\mu(g) = \int _{\statespace} g(x) \mu(dx)$. 
For $K \subseteq \statespace$ let
$$
| g |_1 = \int_\statespace |g(y)| dy \, , \quad  | g |_2^2 = \int_\statespace g(y)^2 dy \, , \quad |g|_K = \sup_{y \in K} |g(y)|
$$
and $|g|_\infty = |g|_\statespace$. For a function $g : \statespace^2 \rightarrow \RR$ and $K,K' \subseteq \statespace$ let
$$
|g|_{K,K'} = \sup_{(x,x') \in K\times K'} |g(x,x')|.
$$

The following lemma is well-known  in the general setting of bifurcating Markov chains including our NBAR model (see Delmas and Marsalle \cite{DM10}, Lemma 2.1 and Guyon \cite{Guyon}, Equation (7)) and highlights the key role of the tagged-branch Markov chain. We prove it in Appendix for the sake of completeness. Introduce 
\begin{equation} \label{eq:TtransitionBAR}
\Ttransition(x,dydz) =  \noisedensity(y-f_0(x),z-f_1(x)) dy dz,%\Ttransition(x,dydz) =  \noisedensity(y-f_0(x),z-f_1(x)) \, dy \, dz,
\end{equation}
Markov kernel from $(\mathbb R,\mathfrak B)$ to $(\mathbb R\times \mathbb R,\mathfrak B \otimes \mathfrak B)$.
\begin{lem}[Many-to-one formulae] \label{lem:Mto1} %Many-to-one and many-to-two formulae
Let $(X_u)_{u\in\TT}$ be a NBAR process, with any initial probability measure $\mu(dx)$ on $(\statespace,\mathfrak B)$ for $X_\emptyset$ such that $\mu\big((1+|\cdot|)^2\big)<\infty$. Then for $g : \statespace \rightarrow \RR$ such that $|g(x)|\leq1+|x|$ for any $x\in \statespace$, we have
\begin{equation} \label{eq:Mto11}
\EE_\mu \Big[ \sum_{u \in\GG_m} g(X_u) \Big]   = |\GG_m|\EE_\mu \big[g(Y_m) \big]  = |\GG_m| \mu \big( \Qq^m g \big)
\end{equation}
with $\Qq$ defined by \eqref{eq:QtransBAR} and
\begin{equation} \label{eq:Mto12}
\EE_\mu \Big[\sum_{(u,v) \in\GG_m^2 \atop u\neq v} g(X_u) g(X_v) \Big]  =  |\GG_m| \sum_{l = 1}^m 2^{l -1} \mu \Big( \Qq^{m-l} \big( \Ttransition ( \Qq^{l - 1} g \otimes \Qq^{l  - 1} g ) \big) \Big),
\end{equation}
with $\Ttransition$ defined by \eqref{eq:TtransitionBAR}.
\end{lem}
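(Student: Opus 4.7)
The plan is to establish the two identities in sequence: the first by a direct induction on $m$, then the second by reducing it to an application of the first on pairs of independent subtrees indexed by the most recent common ancestor.

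For the first formula \eqref{eq:Mto11}, I induct on $m$. The case $m=0$ is trivial since $\Qq^0 g = g$ and $|\GG_0|=1$. For the inductive step, I condition on $\Ff_m$ and use the NBAR dynamics: for each $u \in \GG_m$,
$$
\EE\big[g(X_{u0}) + g(X_{u1}) \mid \Ff_m\big] = \int g(y)\, G_0(y-f_0(X_u))\,dy + \int g(y)\, G_1(y-f_1(X_u))\,dy = 2\,\Qq g(X_u),
$$
by \eqref{eq:gepsmargin}--\eqref{eq:QtransBAR}. Summing over $u \in \GG_m$ and applying the inductive hypothesis to $\Qq g$ (which inherits a linear-growth bound from $g$) concludes.

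For the second formula \eqref{eq:Mto12}, the key combinatorial remark is that every ordered pair $(u,v) \in \GG_m^2$ with $u \neq v$ admits a unique most recent common ancestor $w \in \GG_{m-l}$ for some $l \in \{1,\ldots,m\}$, with $u = w\iota u'$ and $v = w\bar\iota v'$, $\iota \neq \bar\iota$ in $\{0,1\}$, and $u',v' \in \GG_{l-1}$. For fixed $w$, the subtrees rooted at $w0$ and $w1$ (up to depth $l-1$) are, conditional on $(X_{w0}, X_{w1})$, two independent NBAR processes starting respectively from $X_{w0}$ and $X_{w1}$. Applying \eqref{eq:Mto11} inside each subtree, then using this conditional independence and the branching transition $\Ttransition(X_w, \cdot)$ for $(X_{w0}, X_{w1})$ given $X_w$, and accounting for the factor $2$ arising from the two orderings $(\iota,\bar\iota) \in \{(0,1),(1,0)\}$, I obtain
$$
\EE\Big[\sum_{(\iota,\bar\iota)}\sum_{u',v' \in \GG_{l-1}} g(X_{w\iota u'})\, g(X_{w\bar\iota v'}) \,\Big|\, X_w\Big] = 2^{2l-1}\,\Ttransition\big(\Qq^{l-1}g \otimes \Qq^{l-1}g\big)(X_w).
$$
A final summation over $w \in \GG_{m-l}$ via \eqref{eq:Mto11} yields the contribution $2^{m+l-1}\,\mu\big(\Qq^{m-l}\Ttransition(\Qq^{l-1}g\otimes \Qq^{l-1}g)\big)$, and summing over $l$ and factoring $|\GG_m| = 2^m$ produces \eqref{eq:Mto12}.

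The main obstacle is the bookkeeping: keeping the count $2 \cdot 2^{l-1} \cdot 2^{l-1} = 2^{2l-1}$ of ordered descendant pairs below the two children of $w$ consistent with the powers of two in the statement, and invoking \eqref{eq:Mto11} at the correct moment on the inner subtrees before pushing back up to the root via the outer expectation. Integrability is routine under $|g(x)| \leq 1+|x|$ and $\mu\big((1+|\cdot|)^2\big) < \infty$, propagating linear bounds through iterates of $\Qq$ using the model's linear-growth class $\Ff(\gamma,\ell)$ together with the first moment of $G_0, G_1$.
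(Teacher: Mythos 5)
Your proposal is correct and follows essentially the same route as the paper: part (i) rests on the one-step identity that averaging the two marginal kernels gives $\Qq$ (the paper expands each lineage as a product $\Pp_{u_1}\cdots\Pp_{u_m}$ and sums, which is your induction unrolled), and part (ii) is exactly the paper's argument via the most recent common ancestor, conditional independence of the two subtrees, the $\TT$-transition $\Ttransition$, and a final application of (i), with the same bookkeeping $2\cdot(2^{l-1})^2\cdot 2^{m-l}=|\GG_m|\,2^{l-1}$. No substantive differences to report.
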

%\noindent The formula given by \eqref{eq:Mto11} is called many-to-one formula and the formula given by \eqref{eq:Mto12} is called many-to-two formula. 

\subsection{Preliminary}

Set $\VV(x) = 1+|x|$ for $x\in\statespace$. It plays the role of the Lyapunov function in the following
\begin{lem}[Ergodicity] \label{lem:ergodicity} 
%Work under Assumption~\ref{ass:gepsilon}. 
Let $\gamma \in (0,1/2)$ and $\ell>0$, let $r>0$ and $\lambda>2$. For every $(f_0,f_1)\in\Ff(\gamma,\ell)^2$, for every $\noisedensity$ such that $(G_0,G_1)\in \Gg(r,\lambda)^2$ satisfy Assumption~\ref{ass:ergodicity2}, 
the Markov kernel $\Qq$ admits a unique invariant probability measure $\nu$ of the form $\nu(dx) = \nu(x) dx$ on $(\statespace,\mathfrak B)$.
%a $\nu$-integrable function $\VV : \statespace \rightarrow [1,+\infty)$, 
Moreover, for every $\noisedensity$ such that $(G_0,G_1)\in \Gg(r,\lambda)^2$ satisfy Assumption~\ref{ass:ergodicity2}, there exist a constant $R >0$ and $\rho \in(0,1/2)$ such that
\begin{equation*}\label{eq:lyapunov}
\sup_{(f_0,f_1)} \sup_{|g|\leq \VV}|\Qq^{m}g(x) - \nu(g)|\leq R \, \VV (x) \, \rho^{m}, \quad x \in \statespace, \quad m \geq 0,
\end{equation*}
where the supremum is taken among all functions $(f_0,f_1)\in \Ff(\gamma,\ell)^2$ and among all functions $g:\statespace \rightarrow \RR$ which satisfy $|g(x)|\leq \VV(x)$ for all $x\in \statespace$. %Moreover, under Assumption~\ref{ass:ergodicity2}, we can take $\rho<1/2$.
\end{lem}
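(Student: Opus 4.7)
My plan is to combine a Foster--Lyapunov drift inequality for the tagged-branch chain $Y$ with a Doeblin minorization on a well-chosen compact set, and then invoke a quantitative Harris theorem. Assumption~\ref{ass:ergodicity2} is precisely engineered so that the resulting geometric rate falls below the critical threshold $1/2$.

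First I would establish a one-step drift. From the transition decomposition $Y_1 = f_{\iota_1}(Y_0) + \ep'_1$, the class bound $|f_\iota(x)| \le \gamma|x|+\ell$, and the constant $M_0 = \ell + \EE[|\ep'_1|] < \infty$ (finite since $(G_0,G_1)\in\Gg(r,\lambda)^2$ with $\lambda>2$), a direct computation yields
\[ \Qq \VV(x) \le \gamma \VV(x) + b, \qquad b = 1-\gamma+M_0, \]
with $b$ depending only on $(\gamma,\ell,G_0,G_1)$ and not on the specific pair $(f_0,f_1) \in \Ff(\gamma,\ell)^2$. Iterating gives $\Qq^m \VV \le \gamma^m \VV + b/(1-\gamma)$; combined with the irreducibility established next, this produces an invariant probability measure $\nu$ by a Krylov--Bogolyubov argument. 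Since $\Qq(x,dy)$ has density $q(x,y) = \tfrac12(G_0(y-f_0(x))+G_1(y-f_1(x)))$, any such $\nu$ is automatically absolutely continuous with density $\nu(y) = \int q(x,y)\nu(dx)$.

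Next I would set up a Doeblin minorization on the compact interval $C = [-M_1, M_1]$ provided by Assumption~\ref{ass:ergodicity2}. For $x, y \in C$ one has $|y - f_\iota(x)| \le (1+\gamma)M_1 + \ell$, whence $q(x,y) \ge \delta((1+\gamma)M_1+\ell)$ by \eqref{eq:minorg}, and integration against $dy$ on $C$ gives
\[ \Qq(x, \cdot) \ge \alpha\, \phi(\cdot), \quad x \in C, \qquad \alpha := 2 M_1\, \delta\big((1+\gamma)M_1 + \ell\big) > 1/2, \]
where $\phi$ is the uniform probability on $C$; the strict inequality $\alpha > 1/2$ is exactly the quantitative part of Assumption~\ref{ass:ergodicity2}. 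Simultaneously, the size constraint $M_1 > 2M_0/(1/2-\eta-\gamma)$ in the same assumption forces $b/\VV(x)$ to be strictly below $1/2 - \eta - \gamma$ for $|x| \ge M_1$ (after possibly enlarging $M_1$ slightly, which the minorization condition can absorb), so that $\Qq \VV(x) < (1/2 - \eta) \VV(x)$ outside $C$. Thus the small set $C$ coincides with the exit set of a drift contracting at rate already below $1/2$.

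Combining these two ingredients via the quantitative Harris theorem of Hairer--Mattingly, or the $\VV$-uniform ergodicity theorem of Meyn--Tweedie (Ch.~15), yields the announced convergence $\sup_{|g|\le \VV}|\Qq^m g(x) - \nu(g)| \le R\, \VV(x)\, \rho^m$, with an explicit rate that can be bounded above by a quantity of the form $\max(\gamma + b/M_1,\, 1-\alpha) + o(1)$. The main obstacle is quantitative rather than qualitative: a plain application of Harris only gives $\rho < 1$, whereas the need to later compete with the branching factor $|\GG_m| = 2^m$ in the many-to-one formula \eqref{eq:Mto12} imposes the sharper requirement $\rho < 1/2$. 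Assumption~\ref{ass:ergodicity2} is crafted precisely for this purpose: the first entry of the maximum lies below $1/2 - \eta/2$ by the lower bound on $M_1$, and the second is strictly below $1/2$ by $\alpha > 1/2$. Since every constant appearing in the drift, the minorization and the resulting rate depends only on $(\gamma, \ell, G_0, G_1)$, the bound is uniform over $(f_0, f_1) \in \Ff(\gamma,\ell)^2$, as the lemma states.
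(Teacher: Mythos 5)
Your overall route is the same as the paper's: a Foster--Lyapunov drift, a Doeblin minorization on $[-M_1,M_1]$ with constant $2M_1\delta((1+\gamma)M_1+\ell)>1/2$, and the quantitative Harris theorem of Hairer--Mattingly to extract a rate $\rho<1/2$, uniformly over $\Ff(\gamma,\ell)^2$. The existence/uniqueness and absolute continuity of $\nu$, and the uniformity discussion, are fine.

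There is, however, a genuine quantitative gap in the step that is supposed to push the rate below $1/2$, and it comes from running the drift with the Lyapunov function $\VV(x)=1+|x|$. Your drift constant is then $b=1-\gamma+M_0$, and the condition you need outside $C=[-M_1,M_1]$, namely $b/\VV(x)<1/2-\eta-\gamma$, amounts to $1+M_1>(1-\gamma+M_0)/(1/2-\eta-\gamma)$. Assumption~\ref{ass:ergodicity2} only provides $M_1>2M_0/(1/2-\eta-\gamma)$, which implies the needed inequality exactly when $M_0\geq 1/2+\eta$ --- and $M_0=\ell+\EE[|\ep_1'|]$ can perfectly well be smaller than that. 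Your proposed repair, ``enlarge $M_1$ slightly, which the minorization condition can absorb,'' is not justified: $\delta(\cdot)$ defined in \eqref{eq:minorg} is nonincreasing and can decay much faster than $1/M$ (Gaussian-type marginals), so \eqref{eq:ergodicity2} may fail for \emph{every} $M_1'>M_1$; the assumption guarantees the minorization only for the given $M_1$. The paper sidesteps this entirely by applying Hairer--Mattingly with the Lyapunov function $V(x)=|x|$, whose drift constant is $M_0$ (not $1-\gamma+M_0$); the precise rate there is $\rho=\big(1-(\alpha-\alpha_0)\big)\vee\frac{2+M_1\beta\gamma_0}{2+M_1\beta}$ with $\beta=\alpha_0/M_0$ and $\gamma_0=\gamma+2M_0/M_1+\eta$, and the second term is $<1/2$ if and only if $M_1>(1+2\alpha_0)M_0/(1/2-\gamma-\eta)$, which is exactly what the threshold $2M_0/(1/2-\eta-\gamma)$ in Assumption~\ref{ass:ergodicity2} delivers for $\alpha_0\in(0,1/2)$; the bound in terms of $1+\beta|x|$ is converted to $R\,\VV(x)$ only at the very end. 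Your sketch also simplifies the Harris rate to $\max(\gamma+b/M_1,\,1-\alpha)+o(1)$, which skips the convex-combination term $\frac{2+M_1\beta\gamma_0}{2+M_1\beta}$ whose control is precisely where the constant in the assumption is used; and the alternative reference to Meyn--Tweedie $V$-uniform ergodicity would only give some $\rho<1$, not the required $\rho<1/2$. So: right architecture, but the $\rho<1/2$ argument as written does not go through with $\VV$; redo the Harris step with $V(x)=|x|$ as in the paper (or redo the bookkeeping for $\VV$ with the exact Hairer--Mattingly constants) to close it.
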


\begin{proof}[Proof of Lemma~\ref{lem:ergodicity}]
We shall rely on the results of Hairer and Mattingly \cite{HairerMattingly}.
\vip
\noindent {\it Step 1.}
In order to make use of Theorem 1.2 of  \cite{HairerMattingly} we shall verify their Assumptions 1 and 2.
%We keep as much as possible with the notation of \cite{HairerMattingly}.\\
%
%Let $\VV_0$ be the function defined on $\statespace$ by $\VV_0(y) = |y|$. 
Since $Y_1 = f_{\iota_1}(Y_0) + \ep'_1$ where $\iota_1$ is drawn according to the Bernoulli distribution with parameter $1/2$ and $\ep'_1$ has density $(G_0+G_1)/2$, we get
\begin{align*}\label{eq:Hairer-Mattingly1}
\Qq (|\cdot|)(x) & = \EE_x \big[|Y_{1}|\big] \leq \EE\big[|f_{\iota_1}(x)| \big] + \EE[|\ep'_1|] \leq \gamma |x| + M_0
\end{align*}
using $(f_0,f_1) \in \Ff(\gamma,\ell)^2$, with $M_0 = \ell + \EE[|\ep'_1|]$ as defined previously. We have $\gamma\in(0,1)$ and $M_0 \geq0$, so that is Assumption 1 in \cite{HairerMattingly} (with their $V(y) = |y|$). \\

Set $\Cc = \{x\in\statespace; |x|\leq M_1 \}$ where $M_1$ comes from Assumption~\ref{ass:ergodicity2}. For any $A\in\mathfrak B$ and $x\in\Cc$, using the expression of $\Qq$ given by \eqref{eq:QtransBAR},
\begin{align*}
\Qq(x,A) %& = \frac{1}{2} \int_{A} G_0\big(y-f_0(x)\big) dy + \frac{1}{2} \int_{A} G_1\big(y-f_1(x)\big) dy  \\
& \geq \frac{1}{2} \int_{A\cap \Cc} G_0\big(y-f_0(x)\big) dy + \frac{1}{2} \int_{A\cap \Cc} G_1\big(y-f_1(x)\big) dy. 
\end{align*}
For $(x,y)\in\Cc^2$, we have $|y - f_\iota(x)| \leq (1+\gamma)M_1+\ell$ for $\iota\in\{0,1\}$. Thus
\begin{equation*}\label{eq:Hairer-Mattingly1}
\inf_{x\in \Cc} \Qq(x,A) \geq 2 M_1 \delta\big( (1+\gamma)M_1+\ell \big) \frac{|A\cap\Cc|}{|\Cc|} \quad \forall A\in \mathfrak B,
\end{equation*}
where $\delta(\cdot)$ is defined by \eqref{eq:minorg} and $|A|$ denotes the Lebesgue measure of $A\in\mathfrak B$. That is Assumption~2 in \cite{HairerMattingly} with $\alpha = 2 M_1 \delta\big( (1+\gamma)M_1+\ell \big)>0$.  The existence and uniqueness of an invariant probability measure $\nu$ follows from Theorem 1.2 of \cite{HairerMattingly}. %(see also Theorem 1 of \cite{BL95}).
Moreover $\nu$ is absolutely continuous with respect to the Lebesgue measure, since $\Qq(x,dy)$ defined by \eqref{eq:QtransBAR} itself is absolutely continuous with respect to the Lebesgue measure. 
By Assumption~\ref{ass:ergodicity2}, for $M_1$ satisfying \eqref{eq:ergodicity2}, there exists some $\a_0\in(0,1/2)$ such that 
\begin{equation} \label{eq:ergodicity22}
\alpha = 2M_1 \delta\big( (1+\gamma)M_1+\ell \big) > 1/2 + \alpha_0.
\end{equation}
We set $\beta = \alpha_{0}/M_0$. For all $x\in \statespace$ we pick $\Qq^{m}\delta_{x}$ and $\nu$ for $\mu_1$ and $\mu_2$ in Theorem 1.3 of \cite{HairerMattingly} and apply it recursively. We conclude that for any function $g$ such that $|g(x)| \leq (1+\beta|x|)$ for all $x\in\statespace$, for some positive constant $C$, we have
\begin{equation*}
|\Qq^{m}g(x) - \nu(g)| \leq C\rho^m (1+\beta|x|)
\end{equation*}
with $C= 1+\int_\statespace (1+\b|x|)\nu(x)dx<\infty$.
\vip
\noindent {\it Step 2.} A precise control of $\rho$ with respect to $\gamma$, $M_0$ and $\alpha$ is established in \cite{HairerMattingly}. Set $\gamma_0 = \gamma+2M_0/M_1 +\eta \in (\gamma+2M_0/M_1,1)$ where $\eta$ comes from Assumption~\ref{ass:ergodicity2}. Theorem 1.3 of \cite{HairerMattingly} states one can take 
$$
\rho = \big(1 - (\alpha - \alpha_0)\big) \vee \big( \frac{2+M_1\beta\gamma_0}{2+M_1\beta} \big).
$$
Condition \eqref{eq:ergodicity22} gives immediately
$1-(\alpha-\alpha_0) < 1/2.$ Note that
$$
\frac{2+M_1\beta\gamma_0}{2+M_1\beta} < \frac{1}{2} \quad \Longleftrightarrow \quad M_1 > \frac{(1+2\alpha_0)M_0}{1/2-\gamma-\eta}
$$
which is satisfied for $\alpha_0\in(0,1/2)$ with our choice of $M_1$ (which satisfies \eqref{eq:ergodicity2}).
Thus Assumption~\ref{ass:ergodicity2} guarantees  we can take $\rho < 1/2$. 
\vip
It immediately follows from Step 1 and Step 2 that, for any function $g$ such that $|g| \leq \VV$, for some positive constant $R$, we have
\begin{equation*}
|\Qq^{m}g(x) - \nu(g)| \leq R\rho^{m}\VV(x)
\end{equation*}
with $\VV(x) = 1+|x|$, as asserted. This bound holds uniformly over $(f_0,f_1)\in \Ff(\gamma,\ell)^2$ by construction (the uniform choice of $\rho$ is guaranteed by Step 2 and for a uniform choice of $C$ in Step 1 recall that $(G_0,G_1)\in\Gg(r,\lambda)^2$).
\end{proof}
Step 2 of this proof highlights that Assumption~\ref{ass:ergodicity2} is written to readily obtain $\rho<1/2$. Note that to prove the existence of some $\rho\in (0,1)$ in Step 1, one only need the existence of some $M_1> 2M_0/(1-\gamma)$ such that $2 M_1 \delta\big( (1+\gamma)M_1+\ell \big)>0$ with $\delta(\cdot)$ defined by \eqref{eq:minorg}.
\subsection{Estimation of the density of the invariant measure}
For $x\in \Dd$, set%with $\Dd\subset \statespace$ compact interval, set
\begin{equation} \label{nuhat}
\widehat{\nu}_n(x) = \frac{1}{|\TT_n|}\sum\limits_{u \in\TT_n} K_{h_n} (x-X_u),
\end{equation}
a kernel estimator of the density $\nu$ of the invariant measure of the tagged-branch chain $Y$ of transition~$\Qq$.
\begin{prop} \label{prop:nuhat_L2}
%Under Assumptions~\ref{ass:kernel},~\ref{ass:geometric_ergodicity},~\ref{ass:ar_functions} and~\ref{ass:noise density}, 
%$\widehat{\nu}_n(x)$ converges in quadratic mean and almost surely towards $\nu(x)$ for any $x\in\statespace$.
%Work under Assumption~\ref{ass:ergodicity2} and Assumption~\ref{ass:noise}. For every $x\in \Dd$,
%
%Work under Assumption~\ref{ass:gepsilon}.
Let $\gamma\in(0,1/2)$ and $\ell>0$, let $r>0$ and $\lambda>3$.
Specify $\widehat{\nu}_n$ with a kernel $K$ satisfying Assumption~\ref{ass:kernel} for some $n_0>0$ and
$$
h_n \propto |\TT_n|^{-1/(2\beta+1)}.
$$
For every $L'>0$ and $0<\beta<n_0$, for every $\noisedensity$ such that $(G_0,G_1)\in \big( \Gg(r,\lambda)\cap \Hh^{\beta}_\statespace(L')\big)^2$ satisfy Assumptions~\ref{ass:ergodicity2} and~\ref{ass:numinor}, for every compact interval $ \Dd\subset \statespace$ with nonempty interior and every $x$ in the interior of $\Dd$,
$$
\sup_{(f_0,f_1)} \EE_{\mu} \Big[ \big(\widehat{\nu}_n(x) - \nu(x) \big)^2 \Big] \lesssim |\TT_n|^{\frac{-2\beta}{2\beta+1}}
$$
where the supremum is taken among all  functions $(f_0,f_1) \in \Ff(\gamma,\ell)^2$, for any initial probability measure $\mu(dx)$ on $\statespace$ for $X_\emptyset$ such that $\mu\big((1+|\cdot|)^2\big)<\infty$.
\end{prop}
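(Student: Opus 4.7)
My plan is to apply the classical bias–variance decomposition of the mean-squared error at point $x$, show that each piece is $O(|\TT_n|^{-2\beta/(2\beta+1)})$, and lean on the many-to-one formulas of Lemma~\ref{lem:Mto1} together with the uniform geometric ergodicity with rate $\rho<1/2$ provided by Lemma~\ref{lem:ergodicity}. Write $\EE_\mu[(\widehat\nu_n(x)-\nu(x))^2]=B_n(x)^2+V_n(x)$ with $B_n(x)=\EE_\mu[\widehat\nu_n(x)]-\nu(x)$ and $V_n(x)=\mathrm{Var}_\mu(\widehat\nu_n(x))$. Using \eqref{eq:Mto11} I get $\EE_\mu[\widehat\nu_n(x)]=|\TT_n|^{-1}\sum_{m=0}^n 2^m\,\mu\bigl(\Qq^m K_{h_n}(x-\cdot)\bigr)$, which I split as $\nu\bigl(K_{h_n}(x-\cdot)\bigr)$ plus an ergodic remainder. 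Applying Lemma~\ref{lem:ergodicity} to the normalised function $K_{h_n}(x-\cdot)/(h_n^{-1}|K|_\infty)$ (which is bounded by $\VV$) yields $|\mu(\Qq^m K_{h_n}(x-\cdot))-\nu(K_{h_n}(x-\cdot))|\lesssim \rho^m h_n^{-1}\mu(\VV)$, so since $2\rho<1$ the series $\sum_m (2\rho)^m$ converges and the remainder is $O(|\TT_n|^{-1}h_n^{-1})$. The main bias term $(K_{h_n}\ast\nu)(x)-\nu(x)$ is $O(h_n^\beta)$ by the vanishing moments of $K$ up to order $n_0>\beta$ (Assumption~\ref{ass:kernel}) combined with the H\"older regularity of $\nu$.

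For the regularity of $\nu$ I would use $\nu=\nu\Qq$: the invariant density satisfies $\nu(y)=\tfrac12\int \nu(x)\bigl(G_0(y-f_0(x))+G_1(y-f_1(x))\bigr)dx$, so since $(G_0,G_1)\in\Hh^\beta_\statespace(L')^2$ the function $\nu$ inherits the $\Hh^\beta$ regularity with a semi-norm bounded by $\tfrac12(|G_0|_{\Hh^\beta}+|G_1|_{\Hh^\beta})$, uniformly in $(f_0,f_1)\in\Ff(\gamma,\ell)^2$. For the variance I decompose $V_n(x)=|\TT_n|^{-2}\bigl[\sum_u\mathrm{Var}(K_{h_n}(x-X_u))+\sum_{u\neq v}\mathrm{Cov}(K_{h_n}(x-X_u),K_{h_n}(x-X_v))\bigr]$. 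Each diagonal term is $\EE[K_{h_n}(x-X_u)^2]=h_n^{-1}\int K^2(z)\,\mu\Qq^m(x-h_n z)\,dz$, which by ergodicity is uniformly $O(h_n^{-1})$, giving a diagonal contribution $O((|\TT_n|h_n)^{-1})$. For pairs $u\in\GG_m$, $v\in\GG_l$ with most recent common ancestor $w$ at level $r$, I condition on $X_w$ and use $(X_{w0},X_{w1})\sim\Ttransition(X_w,\cdot)$ together with $\Qq$-iteration from $w0$ and $w1$; Lemma~\ref{lem:ergodicity} then gives $|\Qq^{m-r-1}K_{h_n}(x-\cdot)(y)-\nu(K_{h_n}(x-\cdot))|\lesssim\rho^{m-r-1}\VV(y)h_n^{-1}$ (and similarly for $l$), which after summing over MRCA levels with weight $2^r$ and over $m,l$ yields an off-diagonal contribution of order $|\TT_n|^{-1}\sum_m(2\rho)^m=O(|\TT_n|^{-1})$, absorbed in $O((|\TT_n|h_n)^{-1})$.

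Combining, $B_n(x)^2\lesssim h_n^{2\beta}+|\TT_n|^{-2}h_n^{-2}$ and $V_n(x)\lesssim(|\TT_n|h_n)^{-1}$; with $h_n\propto|\TT_n|^{-1/(2\beta+1)}$ both are of the same optimal order $|\TT_n|^{-2\beta/(2\beta+1)}$. Uniformity over $(f_0,f_1)\in\Ff(\gamma,\ell)^2$ is automatic because Lemma~\ref{lem:ergodicity} already provides $R$ and $\rho$ uniformly on this class, and the H\"older control on $\nu$ depends only on $G_0,G_1$. The main obstacle will be the off-diagonal variance across different generations: either one extends \eqref{eq:Mto12} to mixed generations or one argues directly by conditioning on the MRCA. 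In both approaches the decisive point is that $\rho<1/2$ (precisely the content of Assumption~\ref{ass:ergodicity2}), for otherwise the exponential growth $2^r$ of ancestors would overcome the geometric decay of covariances and the variance bound would collapse.
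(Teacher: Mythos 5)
Your bias analysis (H\"older regularity of $\nu$ inherited from $(G_0,G_1)$ through $\nu=\nu\Qq$, Taylor expansion with the vanishing moments of $K$, and an ergodic correction of order $(|\TT_n|h_n)^{-1}$ using $\rho<1/2$) is sound and matches the paper. The genuine gap is in the variance, precisely in the off-diagonal covariances for pairs whose most recent common ancestor (MRCA) is close to them. The only control you invoke there is Lemma~\ref{lem:ergodicity} applied to the normalised kernel, i.e.\ $|\Qq^{a}K_{h_n}(x-\cdot)(y)-\nu(K_{h_n}(x-\cdot))|\lesssim \rho^{a}\,\VV(y)\,h_n^{-1}$. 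Carry this bound honestly through your own MRCA summation: a pair $u\in\GG_m$, $v\in\GG_l$ with MRCA at level $r$ contributes at most of order $h_n^{-2}\rho^{(m-r-1)+(l-r-1)}$, and there are $2^{m+l-r-1}$ such pairs, so the off-diagonal part of the variance is of order $|\TT_n|^{-2}h_n^{-2}\sum_{r\le n} 2^{r}\sum_{a,b\ge 0}(2\rho)^{a+b}\asymp |\TT_n|^{-1}h_n^{-2}$, not $|\TT_n|^{-1}$: your final step silently drops the two factors $h_n^{-1}$ coming from the normalisation of $K_{h_n}$. With $h_n\propto|\TT_n|^{-1/(2\beta+1)}$ this is $|\TT_n|^{-(2\beta-1)/(2\beta+1)}$, which exceeds both $(|\TT_n|h_n)^{-1}$ and the target rate. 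Sibling pairs already ruin the argument: there are about $|\TT_n|$ of them, $\rho^{a+b}\asymp 1$ for them, and your bound for each is $h_n^{-2}$; so $\rho<1/2$ alone does not save the covariance sum.

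What is missing is exactly the content of the paper's Lemma~\ref{lem:gagnerh}: besides the ergodic bound, one has $|\Qq H_n|_\infty\lesssim h_n$ and $|\nu(H_n)|\lesssim h_n$ for $H_n=K\big(h_n^{-1}(x-\cdot)\big)$, a gain of a factor $h_n$ obtained by integrating the kernel against the bounded transition density, whence the two-regime bound $|\Qq^{a}H_n(y)-\nu(H_n)|\lesssim h_n\wedge\big(\VV(y)\rho^{a}\big)$. The paper then splits the sum over the distance $l$ to the MRCA at $l^*=\lfloor|\log h_n|/|\log\rho|\rfloor$, using the $h_n$-bound for $l\le l^*$ and the $\rho$-bound beyond, and $\rho<1/2$ makes both pieces of order $h_n$ per generation, which is what yields $(|\GG_m|h_n)^{-1}$ and then $(|\TT_n|h_n)^{-1}$. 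Two further structural points: the paper never handles cross-generation covariances, because it bounds each generation $\GG_m$ separately through \eqref{eq:Mto12} (stated for $u\neq v$ in the same $\GG_m$) and assembles the generations by the triangle inequality in $L^2$ --- a simpler route you could adopt; and your all-pairs conditioning would in any case also need the ancestor--descendant pairs (where the MRCA is $u$ itself), which your $(X_{w0},X_{w1})$ decomposition does not cover. With the $h_n$-gain and the $l^*$-splitting imported, your scheme can be repaired; as written, the claimed absorption of the off-diagonal term into $O\big((|\TT_n|h_n)^{-1}\big)$ is incorrect.
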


\begin{proof}[Proof] %of Proposition~\ref{prop:nuhat_L2}
The usual bias-variance decomposition can be written here as
\begin{align*}
\EE\Big[ \big(\widehat{\nu}_n(x) - \nu(x) \big)^2 \Big] &= \EE\Big[ \big(\frac{1}{|\TT_n|}\sum\limits_{u \in\TT_n} K_{h_n} (x-X_u) - \nu(x) \big)^2 \Big]  \\
& = \EE\Big[ \big(\frac{1}{|\TT_n|}\sum\limits_{u \in\TT_n} K_{h_n} (x-X_u) - K_{h_n}\star\nu(x) \big)^2 \Big]  + \big( K_{h_n}\star\nu(x) - \nu(x) \big)^2
\end{align*}
where $\star$ stands for the convolution.
For $(G_0,G_1) \in \Hh^\b_\statespace(L')$, we have $\nu\in\Hh^\b_\Dd(L'')$ for some $L''>0$: since $\nu$ is invariant for $\Qq$, using \eqref{eq:QtransBAR} which defines $\Qq$, %and Assumption~\ref{ass:gepsilon}
we can write
$$
\nu(y) = \int_\statespace \nu(x) \Qq(x,y) dx = \frac{1}{2} \int_\statespace \nu(x) \Big(G_0\big( y - f_0(x) \big) + G_1\big( y - f_1(x) \big)\Big) dx,
$$
where we immediately see that the regularity of $\nu$ is inherited from the regularity of $G_0$ and $G_1$ the marginals of the noise density $\noisedensity$.
By a Taylor expansion up to order $\lfloor \beta \rfloor$ (recall that the number $n_0$ of vanishing moments of $K$ in Assumption~\ref{ass:kernel} satisfies $n_0>\beta$), we obtain
\begin{equation} \label{nuhat_biais}
\big( K_{h_n}\star\nu(x) - \nu(x) \big)^2 \lesssim h_n^{2\beta},
\end{equation}
see for instance Proposition 1.2 in Tsybakov \cite{Tsybakov}.
In addition, we claim that
\begin{equation} \label{nuhat_var}
\EE\Big[ \big(\frac{1}{|\TT_n|}\sum\limits_{u \in\TT_n} K_{h_n} (x-X_u) - K_{h_n}\star\nu(x) \big)^2 \Big]  \lesssim (|\TT_n| h_n)^{-1}.
\end{equation}
Choosing $h_n \propto |\TT_n|^{-1/(2\beta+1)}$ brings the announced result. Let us now prove \eqref{nuhat_var} in two steps.
\vip
\noindent {\it Step 1. Result over one generation}. 
We heavily rely on the following controls:
\begin{lem} \label{lem:gagnerh} Let $F$ be a bounded function with compact support and $G$ be a locally bounded function. For $h_n>0$ and $x$ in the interior of $\Dd$,
we define the function $H_{n} : \statespace \rightarrow \RR$ by
 $$H_n(\cdot) = F\big(h_n^{-1}(x-\cdot)\big)G(\cdot).$$
For $h_n$ such that $x- h_n {\rm supp}(F) \subset \Dd$, we have
\begin{itemize}
\item[(i)] $|\Qq H_n|_{\infty} \leq h_n |F|_1 |G|_\Dd  |\Qq|_{S,\Dd}$ and $|\nu(H_n)| \leq h_n |F|_1 |G|_\Dd |\nu|_\Dd$.
\item[(ii)] Under Assumption~\ref{ass:ergodicity2}, for $m\geq 1$, $y\in\statespace$,
$$|\Qq^m H_n(y) - \nu(H_n) | \lesssim h_n \wedge \big(\VV(y) \rho^m\big)
$$
up to the constant $\max\{ |F|_1 |G|_\Dd \big(|\Qq|_{\statespace,\Dd}+|\nu|_\Dd \big), R |F|_\infty |G|_{\Dd} \}$.
\end{itemize}
\end{lem}
\noindent The proof of this lemma is postponed to the Appendix. 
Note that we have $|\nu|_\Dd\leq|\Qq|_{\statespace,\Dd} \leq |\Qq|_{\statespace,\statespace} \leq (|G_0|_\infty+|G_1|_\infty)/2<\infty$ (recall that $\nu$ is invariant for $\Qq$ defined by \eqref{eq:QtransBAR} and $(G_0,G_1)\in\Gg(r,\lambda)^2$).
Set
$$
H_n (\cdot ) = K\big(h_n^{-1} (x-\cdot) \big)  \quad \text{and} \quad \widetilde{H}_n(\cdot) = H_n(\cdot) - \nu (H_n),
$$
with $h_n$ sufficiently small such that $x-h_n\text{supp}(K) \subset \Dd$.
Pick $m\geq 1$.
On the one hand,
\begin{align*}
\EE_\mu \Big[ \sum_{u \in\GG_m} \widetilde{H}_n^2(X_u) \Big]   & = |\GG_m| \mu \big( \Qq^m \widetilde{H}_n^2 \big) \lesssim |\GG_m| h_n
\end{align*}
relying on the many-to-one formula \eqref{eq:Mto11} and Lemma~\ref{lem:gagnerh}(i).
Inspired by Doumic, Hoffmann, Krell and Robert \cite{DHKR1} (proof of Proposition~8), set $l^{*} = \lfloor |\log h_n| / |\log \rho |\rfloor$. Since $\mu \big( \Qq^{m-l} \big( \Ttransition ( \VV \otimes \VV ) \big) \big) < \infty$ (use $\VV(x) = 1+|x|$ for $x\in \statespace$, $(f_0,f_1) \in \Ff(\gamma,\ell)^2$ and finally $\mu(\VV^2)<\infty$, one can look at Lemmae~25 and 26 of Guyon \cite{Guyon}), by the many-to-one formula \eqref{eq:Mto12},
$$
\EE_\mu \Big[ \sum_{u \neq v \in \GG_m} \widetilde{H}_n(X_u) \widetilde{H}_n(X_v) \Big] \lesssim |\GG_m| \Big( \sum_{l = 1}^{l^*} 2^{l-1} h_n^2  + \sum_{l =
l^* +1}^{m} 2^{l-1} \rho^{2( l-1)} \Big) \lesssim 2^m h_n \, ,
 $$
using the first upper-bound given by Lemma~\ref{lem:gagnerh}(ii) for before $l^*$, the second one after $l^*$, and using $\rho \in(0, 1/2)$.
Finally, we conclude that
$$
\EE_\mu \Big[\big(\sum\limits_{u \in \GG_m} \widetilde{H}_n(X_u)\big)^{2}\Big] \lesssim |\GG_m|h_n.
$$
Thus, we have uniformly over $(f_0,f_1)\in\Ff(\gamma,\ell)^2$,
\begin{equation} \label{eq:1ge}
\EE_\mu\Big[ \big(\frac{1}{|\GG_m|}\sum\limits_{u \in\GG_m} K_{h_n} (x-X_u) - K_{h_n}\star\nu(x) \big)^2 \Big] \lesssim (|\GG_m| h_n)^{-1}, \quad m\geq 1.
\end{equation}
%This holds uniformly since $|\nu|_\infty \leq |\Qq|_{\statespace,\statespace} \leq (|G_0|_\infty+|G_1|_\infty)/2$.
\vip
\noindent {\it Step 2. Result over a subtree}. We rely on the previous inequality \eqref{eq:1ge}.
Decomposing by generation and by the triangle inequality, we obtain
\begin{align*}
\EE_\mu \Big[  \big(\frac{1}{|\TT_n|} & \sum\limits_{u \in\TT_n} K_{h_n} (x-X_u) - K_{h_n}\star\nu(x) \big)^2 \Big]  \\
%& = 
& \leq \bigg(\sum_{m=0}^{n} \frac{|\GG_{m}|}{|\TT_n|} \Big(\EE_\mu \Big[\big(\frac{1}{|\GG_{m}} \sum_{u \in\GG_{m}} K_{h_n} (x-X_u) - K_{h_n}\star\nu(x) \big)^2 \Big]\Big)^{1/2}\bigg)^2 \\
& \lesssim \Big(  \frac{|\GG_{0}|}{|\TT_n|} h_n^{-1} + \sum_{m=1}^{n} \frac{|\GG_{m}|}{|\TT_n|}(|\GG_{m}|h_n)^{-1/2}\Big)^{2} \lesssim (|\TT_n|h_n)^{-1}.
\end{align*}
%
%The term for $m=0$ is negligible compared to 
This proves inequality \eqref{nuhat_var} we claimed and the proof is now complete. Note that we have removed the log-term which appears in Proposition 8 of Doumic {\it et al.} \cite{DHKR1}.
\end{proof}

\begin{prop} \label{prop:nuhat_ps}
In the same setting as in Proposition~\ref{prop:nuhat_L2},
$$
\widehat{\nu}_n(x) \rightarrow \nu(x) , \quad \PP_\mu - a.s.
$$
as $n\rightarrow \infty$. %for any initial distribution $\mu$.
\end{prop}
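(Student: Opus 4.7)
The strategy is to derive the almost sure convergence directly from the quantitative $L^2$ bound of Proposition~\ref{prop:nuhat_L2} via the first Borel--Cantelli lemma. The key arithmetic observation is that, unlike in classical i.i.d.\ density estimation where the sample size grows linearly, here $|\TT_n|=2^{n+1}-1$ grows geometrically in $n$, so any polynomial rate in $|\TT_n|^{-1}$ becomes summable.

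Concretely, I would fix $\epsilon>0$ and apply Markov's inequality together with Proposition~\ref{prop:nuhat_L2} to obtain
$$
\PP_\mu\bigl(|\widehat\nu_n(x)-\nu(x)|>\epsilon\bigr)\;\leq\;\epsilon^{-2}\,\EE_\mu\bigl[\bigl(\widehat\nu_n(x)-\nu(x)\bigr)^{2}\bigr]\;\lesssim\;\epsilon^{-2}\,|\TT_n|^{-2\beta/(2\beta+1)},
$$
the hidden constant being independent of $n$. Since $|\TT_n|\geq 2^{n}$ and $\beta>0$, the bound $2^{-2n\beta/(2\beta+1)}$ is the term of a convergent geometric series, hence
$$
\sum_{n\geq 0}\PP_\mu\bigl(|\widehat\nu_n(x)-\nu(x)|>\epsilon\bigr)<\infty.
$$
The first Borel--Cantelli lemma (which requires no independence) gives $\PP_\mu\bigl(\limsup_n\{|\widehat\nu_n(x)-\nu(x)|>\epsilon\}\bigr)=0$, so $\limsup_n|\widehat\nu_n(x)-\nu(x)|\leq\epsilon$ $\PP_\mu$-almost surely. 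Applying this with $\epsilon=1/k$ for each $k\in\NN^{*}$ and intersecting the countable family of full-probability events produced thereby yields $\widehat\nu_n(x)\to\nu(x)$ $\PP_\mu$-almost surely.

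There is no serious obstacle here: the whole difficulty is already packaged in Proposition~\ref{prop:nuhat_L2}, whose proof carefully combines the ergodicity of the tagged-branch chain with the many-to-one formulae to control both bias and variance. Once that quantitative statement is in hand, the geometric growth of the binary tree does the rest of the work for free, and no law of large numbers for martingales or bifurcating Markov chains is needed at this step. The only point requiring mild care is the uniformity in $n$ of the $\lesssim$ constant, which is precisely what the supremum formulation of Proposition~\ref{prop:nuhat_L2} ensures.
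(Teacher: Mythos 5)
Your proof is correct and follows essentially the same route as the paper: a summable mean-square bound (thanks to the geometric growth of $|\TT_n|$) combined with Markov's inequality and the first Borel--Cantelli lemma. The only cosmetic difference is that the paper applies Borel--Cantelli to the centered stochastic term $|\TT_n|^{-1}\sum_{u\in\TT_n}K_{h_n}(x-X_u)-K_{h_n}\star\nu(x)$ alone and treats the bias $K_{h_n}\star\nu(x)-\nu(x)$ as a deterministic $O(h_n^\beta)$ term, which makes the argument valid for any bandwidth $h_n\propto|\TT_n|^{-\alpha}$, $\alpha\in(0,1)$, whereas you invoke the packaged bound of Proposition~\ref{prop:nuhat_L2} for the specific choice $h_n\propto|\TT_n|^{-1/(2\beta+1)}$; both are fine for the statement as given.
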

\begin{proof}[Proof]% of Proposition~\ref{prop:nuhat_ps}]
Write
\begin{equation*}
\widehat{\nu}_n(x) - \nu(x) = \big(\frac{1}{|\TT_n|}\sum\limits_{u \in\TT_n} K_{h_n} (x-X_u) - K_{h_n}\star\nu(x) \big) + \big( K_{h_n}\star\nu(x) - \nu(x) \big).
\end{equation*}
From \eqref{nuhat_biais}, we deduce that
\begin{equation}\label{eq:bochner}
| K_{h_n}\star\nu(x) - \nu(x) | \rightarrow 0  \, \text{ as } \, n\rightarrow \infty.
\end{equation}
Note that we could obtain \eqref{eq:bochner} invoking the result stated by Theorem 2.1.1 of Prakasa Rao \cite{PrakasaRao83}, result also known as the Bochner Lemma (see section 7.1.2 of Duflo \cite{Duflo98}).
Using \eqref{nuhat_var} for $h_n \propto |\TT_n|^{-\alpha}$ with $\alpha \in (0,1)$,
$$
\sum_{n\geq 0} \EE_\mu \Big[ \big(\frac{1}{|\TT_n|}\sum\limits_{u \in\TT_n} K_{h_n} (x-X_u) - K_{h_n}\star\nu(x) \big)^2 \Big] < \infty,
$$
and by the Borel-Cantelli lemma, we deduce that
$$
\Big| \frac{1}{|\TT_n|}\sum_{u \in\TT_n} K_{h_n} (x-X_u) - K_{h_n}\star\nu(x) \Big| \rightarrow 0  , \quad \PP_\mu - a.s.
$$
as $n\rightarrow \infty$. Thus $| \widehat{\nu}_n(x) - \nu(x) | \rightarrow 0$, $\PP_\mu - a.s.$ as $n\rightarrow \infty$.
\end{proof}

%%%%
%\subsection{Proofs of Theorems~\ref{thm:upper_rateBAR} and~\ref{thm:lower_rateBAR}}

%\begin{proof}[Proof of Theorem~\ref{thm:upper_rateBAR}]
\subsection{Proof of Theorem~\ref{thm:upper_rateBAR}}
For $x$ in the interior of $\Dd$, for $\iota \in \{ 0,1\}$, we plan to use the decomposition
\begin{align*}
 \widehat{f}_{\iota,n}(x)  - f_\iota(x) & = \frac{M_{\iota ,n}(x)  + L_{\iota ,n}(x) }{\widehat{\nu}_n(x)\vee \varpi_n} - \frac{(\nu f_\iota) (x)}{\nu(x)} \\
& = \frac{M_{\iota ,n}(x)}{\widehat{\nu}_n(x)\vee \varpi_n} + \frac{L_{\iota ,n}(x) - (\nu f_\iota) (x) }{\widehat{\nu}_n(x)\vee \varpi_n} - \frac{\widehat{\nu}_n(x)\vee \varpi_n - \nu(x)}{\widehat{\nu}_n(x)\vee \varpi_n} f_\iota(x)
\end{align*}
with 
\begin{align}
M_{\iota ,n}(x) & = \frac{1}{|\TT_n|} \sum\limits_{u \in \TT_n} K_{h_n} (x-X_u) \ep_{u\iota}, \label{eq:partmart} \\
L_{\iota,n}(x) & = \frac{1}{|\TT_n|} \sum_{u \in \TT_n}  K_{h_n}(x-X_u) f_{\iota}(X_u) \label{eq:partnuf}.
\end{align}
Thus
\begin{multline} \label{3terms}
\EE_\mu \Big[ \big(  \widehat{f}_{\iota,n}(x)  - f_\iota(x)  \big)^2 \Big]  \lesssim \varpi_n^{-2} \Big(\EE_\mu \Big[  \big( M_{\iota ,n}(x) \big)^2 \Big]  + \EE_\mu \Big[ \big( L_{\iota ,n}(x) - (\nu f_\iota) (x) \big)^2 \Big]  \\ + \EE_\mu \Big[ \big( \widehat{\nu}_n(x)\vee \varpi_n - \nu(x) \big)^2 \Big] \Big) 
\end{multline}
using $| f_\iota|_\Dd < \infty$, uniformly over the class $\Ff(\gamma,\ell)$ for $\Dd$ compact interval.
We successively treat the three terms in Steps from 1 to 3.
\vip
\noindent \textit{Step 1. Term $M_{\iota ,n}(x)$.} For all $m \geq 1$ and $\iota\in\{0,1\}$ fixed, the sequence $( \ep_{u\iota})_{ u \in \GG_m}$ is a family independent random variables such that $\EE[\varepsilon_{u\iota}^2] = \sigma_\iota^2$. Thus
\begin{align*}
\EE_\mu \Big[\big(\frac{1}{|\GG_{m}|}\sum_{u \in\GG_m} K_{h_n} (x-X_u )\ep_{u\iota} \big)^{2} \Big] & = 
\frac{\sigma_\iota^{2}}{|\GG_{m}| h_n^{2}}  \EE_\mu \Big[  \frac{1}{|\GG_m|} \sum_{u \in\GG_{m}} K^{2} \big(h_n^{-1}(x-X_u)\big)\Big] \\
& = \frac{\sigma_\iota^{2}}{|\GG_{m}|h_n^{2}} \mu\Big(\Qq^m K^{2}\big(h_n^{-1}(x-\cdot)\big)\Big)  \lesssim (|\GG_m| h_n)^{-1}
\end{align*}
by the many-to-one formula \eqref{eq:Mto11} and using Lemma~\ref{lem:gagnerh}(i).
The result over a subtree follows by the triangle inequality (as in Step 2 of the proof of Proposition~\ref{prop:nuhat_L2}),
\begin{equation} \label{termM}
\EE_\mu \Big[  \big( M_{\iota ,n}(x) \big)^2 \Big] \lesssim (|\TT_n| h_n)^{-1}.
\end{equation}
\vip
\noindent \textit{Step 2. Term $L_{\iota ,n}(x)$.} By usual the bias-variance decomposition,
\begin{multline*}
\EE_\mu \Big[ \big( L_{\iota ,n}(x) - (\nu f_\iota) (x) \big)^2 \Big]  = \EE_\mu \Big[ ( \frac{1}{|\TT_n|} \sum_{u \in \TT_n}  K_{h_n}(x-X_u) f_{\iota}(X_u) - K_{h_n}\star (\nu f_\iota)(x) \big)^2  \Big] 
\\+\big(  K_{h_n}\star (\nu f_\iota)(x) - (\nu f_\iota)(x)\big)^2.
\end{multline*}
First, since $(\nu f_\iota) \in \Hh^\beta_\Dd(L'')$ for some constant $L''>0$ and since Assumption~\ref{ass:kernel} is valid with $n_0>\beta$,
$$
\big(  K_{h_n}\star (\nu f_\iota)(x) - (\nu f_\iota)(x)\big)^2 \lesssim h_n^{2\beta}.
$$
Secondly, we do the same study as in the proof of Proposition~\ref{prop:nuhat_L2} for
$$
H_n(\cdot) = K\big( h_n^{-1} \big(x-\cdot)\big) f_\iota(\cdot),
$$
relying on Lemma~\ref{lem:gagnerh} (using $|f_\iota|_{\Dd}<\infty$ uniformly over the class $\Ff(\gamma,\ell)$, with $\Dd$ compact interval), with $h_n$ sufficiently small such that $x-h_n\text{supp}(K) \subset \Dd$.
We obtain
$$
\EE_\mu \Big[ ( \frac{1}{|\TT_n|} \sum_{u \in \TT_n}  K_{h_n}(x-X_u) f_{\iota}(X_u) - K_{h_n}\star (\nu f_\iota)(x) \big)^2  \Big]  \lesssim (|\TT_n|h_n)^{-1}.
$$
Thus
\begin{equation} \label{termL}
\EE_\mu \Big[ \big( L_{\iota ,n}(x) - (\nu f_\iota) (x) \big)^2 \Big] \lesssim h_n^{2\beta} + (|\TT_n|h_n)^{-1}.
\end{equation}
\vip
\noindent \textit{Step 3. Denominator  $\widehat{\nu}_n(x)\vee \varpi_n$.} We prove the following lemma in Appendix.
\begin{lem} \label{lem:minorationnu}
%Work under Assumption~\ref{ass:gepsilon}. 
Let $\gamma\in(0,1/2)$ and $\ell>0$, let $r>0$ and $\lambda>3$.
For every $\noisedensity$ such that $(G_0,G_1)\in \Gg(r,\lambda)^2$ satisfy Assumptions~\ref{ass:ergodicity2} and~\ref{ass:numinor}, there exists $d = d(\gamma,\ell , G_0,G_1)>0$ such that for every $\Dd \subset [-d,d]$,
%Then for any compact interval $\Dd\subset S$, 
$$\inf_{(f_0,f_1)} \inf_{x\in \Dd} \nu(x) > 0$$ 
where the infimum is taken among all functions $(f_0,f_1)\in\Ff(\gamma,\ell)^2$.
\end{lem}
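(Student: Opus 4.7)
The plan is to exploit the fixed-point identity
$$
\nu(y) = \frac{1}{2} \int_\statespace \nu(x) \big( G_0(y - f_0(x)) + G_1(y - f_1(x)) \big) dx
$$
satisfied by the invariant density in two successive steps: first to prove that $\nu$ places most of its mass on $[-M_2, M_2]$, and then to transfer that mass into a pointwise lower bound for $\nu$ on a smaller interval around the origin. The constant $d$ will be fixed at the end from $M_2$ and $M_3$ produced by Assumption~\ref{ass:numinor}.

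For the first step, I would begin by observing that the identity above immediately yields the crude bound $|\nu|_\infty \leq (|G_0|_\infty + |G_1|_\infty)/2$, uniform over $(f_0,f_1) \in \Ff(\gamma,\ell)^2$. Feeding this back into the identity and using the polynomial tail $G_\iota(z) \leq r/(1+|z|^\lambda)$ provided by $\Gg(r,\lambda)$, together with the envelope $|f_\iota(x)| \leq \gamma|x| + \ell$ to lower-bound $|y - f_\iota(x)|$ in terms of $|y - \gamma|x| - \ell|$ and $|y + \gamma|x| + \ell|$, one can majorize the integrand by the quantity appearing in the definition of $\eta(M_2)$. Integrating over $\{|y| > M_2\} \times \statespace$ yields the tail bound
$$
\nu\big(\{|y| > M_2\}\big) \leq \eta(M_2) < 1,
$$
so that $\int_{|x| \leq M_2} \nu(x) dx \geq 1 - \eta(M_2) > 0$ by Assumption~\ref{ass:numinor}.

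For the second step, set $d = M_3 - \gamma M_2 - \ell$, which is strictly positive by Assumption~\ref{ass:numinor}. For every $y \in [-d,d]$ and every $x$ with $|x| \leq M_2$, the triangle inequality gives $|y - f_\iota(x)| \leq d + \gamma M_2 + \ell = M_3$, so the definition~\eqref{eq:minorg} of $\delta$ together with Assumption~\ref{ass:numinor} yields $G_\iota(y - f_\iota(x)) \geq \delta(M_3) > 0$ for $\iota \in \{0,1\}$. Restricting the integral in the fixed-point identity to $\{|x| \leq M_2\}$ then delivers
$$
\nu(y) \geq \delta(M_3) \int_{|x| \leq M_2} \nu(x) dx \geq \delta(M_3)\big(1 - \eta(M_2)\big) > 0
$$
for every $y \in [-d,d]$, with constants independent of the choice of $(f_0,f_1) \in \Ff(\gamma,\ell)^2$. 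Taking any compact $\Dd \subset [-d,d]$ concludes the proof.

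The delicate step is the tail estimate: the natural lower bound on $|y - f_\iota(x)|$ coming from $|f_\iota(x)| \leq \gamma|x| + \ell$ degenerates in the regime $|y| \leq \gamma|x| + \ell$, so one really needs to combine the polynomial decay of $G_\iota$ with the uniform bound $G_\iota \leq |G_\iota|_\infty$. This mixed use is precisely what the shape of $\eta(M)$ — with its prefactor $(|G_0|_\infty + |G_1|_\infty)/2$ and its rational integrand — is designed to encode.
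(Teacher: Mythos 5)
Your argument is correct and is essentially the paper's own proof: the same invariance identity $\nu = \nu\Qq$ gives the tail bound $\int_{|y|>M_2}\nu(y)\,dy \le \eta(M_2)$ via $|\nu|_\infty \le (|G_0|_\infty+|G_1|_\infty)/2$ and the class bounds on $(f_0,f_1)$ and $(G_0,G_1)$, and then $\nu(y) \ge \delta(M_3)\big(1-\eta(M_2)\big)>0$ for $|y|\le d$ with $d+\gamma M_2+\ell \le M_3$, uniformly over $\Ff(\gamma,\ell)^2$. Only your closing remark is slightly off: the prefactor $(|G_0|_\infty+|G_1|_\infty)/2$ in $\eta(M)$ accounts for the bound on $|\nu|_\infty$ inside the invariance identity, not for the regime $|y|\le \gamma|x|+\ell$.
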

%Then, as in the proof of Proposition 7 of \cite{DHKR1},
Relying first on Lemma~\ref{lem:minorationnu}, we choose $n$ large enough such that 
\begin{equation} \label{eq:varpi}
0 < \varpi_n \leq \frac{1}{2} \inf_{(f_0,f_1)} \inf_{x\in\Dd} \nu(x),
\end{equation}
 and we get
\begin{equation} \label{termDenominateur}
\EE_\mu \Big[ \big( \widehat{\nu}_n(x)\vee \varpi_n - \nu(x) \big)^2 \Big]  \lesssim \EE_\mu \Big[ \big( \widehat{\nu}_n(x)- \nu(x) \big)^2 \Big] \lesssim h_n^{2\beta} + (|\TT_n|h_n)^{-1},
\end{equation}
uniformly over $(f_0,f_1)\in\Ff(\gamma,\ell)^2$, using the upper-bound obtained in the proof of Proposition~\ref{prop:nuhat_L2} for the second inequality.
\vip
\noindent Finally, gathering \eqref{termM}, \eqref{termL} and \eqref{termDenominateur} in \eqref{3terms} and choosing $h_n \propto |\TT_n|^{-1/(2\beta+1)}$ we obtain the asserted result.
%\end{proof}

\begin{rk}
The threshold $\varpi_n$ should be chosen such that it inflates the upper-rate of convergence of a slow factor only. Typically, $\varpi_n = (\ln n)^{-1}$ is suitable. Looking carefully at the proof, see \eqref{eq:varpi}, we actually see that $\varpi_n \rightarrow 0$ as $n\rightarrow\infty$ is not necessary. One could choose, $\varpi_n = \varpi$ with
$$
\varpi = \frac{1}{2} \inf_{(f_0,f_1)} \inf_{x\in \Dd} \nu(x) > 0
$$ 
where the infimum is taken among all $(f_0,f_1)\in\Ff(\gamma,\ell)^2$ and where $\varpi>0$ is guaranteed by Lemma~\ref{lem:minorationnu}. However, to calibrate in practice the threshold in such a way is not possible since we cannot compute~$\varpi$. \\ %This remark is also valid for the theorems of Section~\ref{sec:asympto}.

\end{rk}

%\begin{proof}[Proof of  Theorem~\ref{thm:lower_rateBAR}]
%\end{proof}
\subsection{Proof of  Theorem~\ref{thm:lower_rateBAR}}
In the following, $\PP^n_{(f_0,f_1)}$ will denote the law on $\statespace^{|\TT_{n+1}|}$ of the vector $(X_u, u\in \TT_{n+1})$, NBAR process, in the sense of Definition~\ref{def:NBAR}, driven by the autoregressive functions $f_0$ and $f_1$ with initial probability measure $\mu(dx)$ on $\statespace$ for $X_\emptyset$ and with a Gaussian noise
\textit{i.e.} 
%$$\noisedensity(\zz) = \big(2\pi {\rm det}(\Gamma) \big)^{-1/2}\exp(-\frac{1}{2} \zz^t \Gamma^{-1}\zz), \quad \zz\in\statespace^2,$$
%with $\Gamma$ covariance matrix as in \eqref{eq:cov}.
$$
\noisedensity(x,y) = \big( 2\pi (\sigma_0^2 \sigma_1^2(1 -\rho^2) \big)^{-1/2} \exp\Big(-\frac{\sigma_1^2 x^2 - 2\sigma_0 \sigma_1 \rho xy + \sigma_0^2 y^2}{2\big(\sigma_0^2 \sigma_1^2(1 -\rho^2)\big)} \Big), \quad (x,y)\in \statespace^2,
$$
with $\sigma_0,\sigma_1>0$ and $\rho\in(-1,1)$.
 When $f_0=f_1=f$, we shorten $\PP^n_{(f_0,f_1)}$ into $\PP^n_f$. We denote by $\EE^n_f[\cdot]$ the expectation with respect to $\PP^n_f$.
\vip
\noindent \textit{Step 1.} Let $\delta>0$. Fix $f_0 = f_1 = f^*$ with $f ^*\in \Ff(\gamma,\ell)\cap \Hh_\Dd^\beta(L-\delta)$ and $x\in \Dd$. Then, for large enough $n$, setting $h_n \propto |\TT_n|^{-1/(2\beta+1)}$, we construct a perturbation $(f_{0,n},f_{1,n})$ of $(f_0,f_1)$ defined by
$$
f_{0,n}(y) = f_{1,n}(y) = f^*_n(y) = f^*(y) + a h_n^{\beta} K\big(h_n^{-1}(x-y)\big), \quad y \in \statespace, 
$$
for some smooth kernel $K$ with compact support such that $K(0) =1$, and for some $a=a_{\delta,K}>0$ chosen in such a way that $f_n\in  \Ff(\gamma,\ell)\cap \Hh_\Dd^\beta(L)$. Note that at point $y = x$, $|f_{0,n}(x) - f_0(x) | = |f_{1,n}(x) - f_1(x) | = a_{\delta,K} h_n^{\beta} = a_{\delta,K}  |\TT_n|^{-\beta/(2\beta+1)}$.
\vip
%%%%%%%%%%%%%%%%%%
\noindent \textit{Step 2.} 
In the sequel, to shorten expressions, we set
$$
\big| (\widehat{f}_{0,n},\widehat{f}_{1,n}) - f \big|  = \big| \widehat{f}_{0,n}(x) - f(x) \big| +  \big| \widehat{f}_{1,n}(x) - f(x) \big|.
$$
For arbitrary estimators $(\widehat{f}_{0,n}(x),\widehat{f}_{1,n}(x))$ and a constant $C>0$, the maximal risk is bounded below by
\begin{align*}
& \max_{f \in \{ f^*;f^*_n \}} \PP^n_{f} \Big(  |\TT_n|^{\beta/(2\beta+1)} \big| (\widehat{f}_{0,n},\widehat{f}_{1,n}) - f \big|  \geq C\Big) \\
\geq & \frac{1}{2} \Big( \PP^n_{f^*} \big(  |\TT_n|^{\beta/(2\beta+1)}   \big|(\widehat{f}_{0,n},\widehat{f}_{1,n})- f^* \big|  \geq C  \big)  + \PP^n_{f^*_n} \big(  |\TT_n|^{\beta/(2\beta+1)}  \big|(\widehat{f}_{0,n},\widehat{f}_{1,n})- f^*_n \big|  \geq C  \big) \Big) \\
\geq & \frac{1}{2} \EE^n_{f^*} \Big[ {\bf 1}_{\{|\TT_n|^{\beta/(2\beta+1)}  |(\widehat{f}_{0,n},\widehat{f}_{1,n})-f^* | \geq C\}} + {\bf 1}_{\{|\TT_n|^{\beta/(2\beta+1)}  |(\widehat{f}_{0,n},\widehat{f}_{1,n})- f^*_n | \geq C\}} \Big] 
- \frac{1}{2} \| \PP^n_{f^*} - \PP^n_{f^*_n}\|_{TV}.
\end{align*}
By the triangle inequality, we have
\begin{multline*}
|\TT_n|^{\beta/(2\beta+1)}  \Big( \big|(\widehat{f}_{0,n},\widehat{f}_{1,n})-f^* \big| + \big|(\widehat{f}_{0,n},\widehat{f}_{1,n})-f^*_n \big|  \Big) \geq 
2 |\TT_n|^{\beta/(2\beta+1)}  \big| f^*_n(x) - f^*(x) \big| = 2 a_{\delta,K}
\end{multline*}
by Step 1, so if we now take $C<a_{\delta,K}/4$, one of the two indicators within the expectation above must be equal to one with full $\PP_{f^*}^n$-probability. In that case,
$$
\max_{f \in \{ f^*;f^*_n \}} \PP^n_{f} \Big(  |\TT_n|^{\beta/(2\beta+1)} \big| (\widehat{f}_{0,n},\widehat{f}_{1,n}) - f \big|  \geq C\Big)  \geq 
\frac{1}{2} \big( 1 - \| \PP^n_{f^*} - \PP^n_{f^*_n}\|_{TV} \big)
$$
and Theorem~\ref{thm:lower_rateBAR} is thus proved if $\limsup_{n\rightarrow\infty} \| \PP^n_{f^*} - \PP^n_{f^*_n}\|_{TV} < 1$.
\vip
%%%%%%%%%%%%%%%%%%
\noindent \textit{Step 3.} By the Pinsker inequality, we have $ \| \PP^n_{f^*} - \PP^n_{f^*_n}\|_{TV} \leq \tfrac{\sqrt{2}}{2} \Big( \EE^n_{f^*} \Big[ \ln \frac{d\PP^n_{f^*}}{d\PP^n_{f^*_n}} \Big] \Big)^{1/2}$ and the log-likelihood ratio can be written
\begin{align*}
\EE^n_{f^*} \Big[ \ln \Big( \frac{d\PP^n_{f^*}}{d\PP^n_{f^*_n}} \Big) \Big] & =  \EE^n_{f^*} \Big[ \sum_{u\in\TT_n} \ln \Big( \frac{\noisedensity\big(X_{u0}-f^*(X_u) , X_{u1}-f^*(X_u) \big)}{\noisedensity\big(X_{u0}-f^*_n(X_u) , X_{u1}-f^*_n(X_u) \big)} \Big) \Big] \\
 & = - \sum_{u\in\TT_n}  \EE^n_{f^*} \Big[\frac{(\sigma_1^2-\sigma_0\sigma_1\rho) \ep_{u0}+ (\sigma_0^2-\sigma_0\sigma_1\rho) \ep_{u1}}{\sigma_0^2\sigma_1^2(1-\rho^2)} (f^*_n-f^*)(X_u) \Big] \\
 & \hspace{4cm} +  \frac{\sigma_0^2+\sigma_1^2-2 \sigma_0\sigma_1 \rho}{2\sigma_0^2\sigma_1^2(1-\rho^2)}  \sum_{u\in\TT_n}  \EE^n_{f^*} \Big[(f^*_n-f^*)^2(X_u) \Big],
\end{align*}
since $\noisedensity$ is chosen to be the bivariate Gaussian density and, under $\PP^n_{f^*}$, we know $X_{u0} = f^*(X_u)+\ep_{u0}$ and $X_{u1} = f^*(X_u)+\ep_{u1}$.
Recall now that $X_{u}$ is independent of $(\ep_{u0},\ep_{u1})$ which is centered. Thus
\begin{align}
\EE^n_{f^*}\Big[ \ln \Big( \frac{d\PP^n_{f^*}}{d\PP^n_{f^*_n}} \Big) \Big] 
& =  \frac{\sigma_0^2+\sigma_1^2-2 \sigma_0\sigma_1 \rho}{2\sigma_0^2\sigma_1^2(1-\rho^2)} \sum_{u\in\TT_n} \EE^n_{f^*}\Big[ (f^*-f^*_n)^2(X_u)\Big] \notag \\
& = \frac{\sigma_0^2+\sigma_1^2-2 \sigma_0\sigma_1 \rho}{2\sigma_0^2\sigma_1^2(1-\rho^2)} \sum_{m=0}^n |\GG_m|  \mu\Big( \Qq_{f^*}^m\big((f^*-f^*_n)^2\big) \Big) \label{eq:NBARloglik}
\end{align}
using the many-to-one formula \eqref{eq:Mto11}, with 
$$
\Qq_{f^*}(x,y) =  \frac{1}{2}\Big(G_0\big(y-f^*(x)) + G_1\big(y-f^*(x)\big) \Big),
$$
where $G_0$ and $G_1$ are the marginals of $\noisedensity$. Since
$$
\Qq_{f^*}\big((f^*-f^*_n)^2\big)(y) = a_{\delta,K}^2 h_n^{2\b} \int_\statespace K^2\big( h_n^{-1} (x-z)\big) \Qq_{f^*}(y,z) dz \leq  a_{\delta,K}^2  |K|_2^2 |\Qq_{f^*}|_{\statespace,\statespace} h_n^{2\beta+1}
$$
where $|\Qq_{f^*}|_{\statespace,\statespace}$ only depends on $\sigma_0, \sigma_1$ and $\rho$. Gathering this last upper-bound together with \eqref{eq:NBARloglik} and the Pinsker inequality, we finally get, with our choice of $h_n$,
 $$ \| \PP^n_{f^*} - \PP^n_{f^*_n}\|_{TV} \lesssim a_{\delta,K}^2$$
and this term can be made arbitrarily small by picking $a_{K,\delta}$ small enough.

%%%
\subsection{Proof of Proposition~\ref{prop:asymptotic_as}}
For the choice $h_n \propto |\TT_n|^{-\a}$ with $\a \in (0,1)$, relying successively on  \eqref{termM} and \eqref{termL}, we deduce
$$
\sum_{n\geq 0} \EE_\mu \Big[  \big( M_{\iota ,n}(x) \big)^2 \Big]  < \infty  \quad \text{and} \quad \sum_{n\geq 0} \EE_\mu \Big[ \big( L_{\iota ,n}(x) - (\nu f_\iota) (x) \big)^2 \Big] < \infty.
$$
Thus, as $n\rightarrow\infty$,
$$
M_{\iota ,n}(x) \rightarrow 0 , \quad \PP_\mu - a.s. \quad \text{and} \quad L_{\iota ,n}(x) \rightarrow (\nu f_\iota) (x) , \quad \PP_\mu - a.s. 
$$
From Proposition~\ref{prop:nuhat_ps} and since $\varpi_n\rightarrow 0$,
$$
\widehat{\nu}_n(x) \vee \varpi_n \rightarrow \nu(x) , \quad \PP_\mu - a.s.
$$
We conclude reminding that $$\widehat{f}_{\iota,n}(x) = \frac{M_{\iota ,n}(x)  + L_{\iota ,n}(x) }{\widehat{\nu}_n(x)\vee \varpi_n}.$$

\subsection{Proofs of Theorems~\ref{thm:asymptotic_normality} and~\ref{thm:asymptotic_normality2}}

\begin{proof}[Proof of Theorem~\ref{thm:asymptotic_normality}]
Set $x$ in the interior of $\Dd$. The strategy is to use the following decomposition, which is slightly different from the one used in the proof of Theorem~\ref{thm:upper_rateBAR},
\begin{multline*}
\sqrt{|\TT_n|h_n}\begin{pmatrix} \widehat{f}_{0,n}(x) - f_{0}(x)\\
\widehat{f}_{1,n}(x) - f_{1}(x)\end{pmatrix} =
\tfrac{1}{\widehat{\nu}_n(x)\vee\varpi_n} \bigg\{ \sqrt{|\TT_n|h_n}\begin{pmatrix} M_{0,n}(x) \\ M_{1,n}(x) \end{pmatrix} \\+
 \sqrt{|\TT_n|h_n}\begin{pmatrix} N_{0,n}(x) \\ N_{1,n}(x) \end{pmatrix} +
 \sqrt{|\TT_n|h_n}\begin{pmatrix} R_{0,n}(x) \\ R_{1,n}(x) \end{pmatrix}  \bigg\}
\end{multline*}
where, for $\iota \in \{ 0,1 \}$, $M_{\iota ,n}(x)$ is defined by \eqref{eq:partmart},
\begin{align}
%M_{\iota ,n}(x) & = \frac{1}{|\TT_n|} \sum\limits_{u \in \TT_n} K_{h_n} (x-X_u) \ep_{u\iota}, \label{eq:partymart} \\
N_{\iota,n}(x) & = \frac{1}{|\TT_n|} \sum_{u \in \TT_n}  K_{h_n}(x-X_u) \big(f_{\iota}(X_u)-f_{\iota}(x)\big), \label{eq:partyneg} \\
R_{\iota,n}(x) & = \Big( \widehat{\nu}_n(x) - \big(\widehat{\nu}_n(x) \vee \varpi_n \big)\Big) f_{\iota}(x), \label{eq:partyrest}
\end{align}
and $\widehat{\nu}_n(x)$ is defined by \eqref{nuhat}. 
The first part of the decomposition is called main term, the second part negligible term and the third part is a remainder term due to the truncation of the denominator of the estimators.
The strategy is the following: prove first that the last two terms goes to zero almost surely and prove a central limit theorem for the main term in a second step.
\vip
\noindent \textit{Step 1. Negligible and remainder terms,  $N_{\iota,n}(x)$ and  $R_{\iota,n}(x)$.} 
We use the decomposition  $N_{\iota,n}(x) = N_{\iota,n}^{(1)}(x)  + N_{\iota,n}^{(2)}(x)$
where
\begin{equation}
N_{\iota,n}^{(1)}(x)= \frac{1}{|\TT_n|} \sum_{u \in \TT_n}  \EE_\nu\big[K_{h_n}(x-X_u) \big(f_{\iota}(X_u)-f_{\iota}(x)\big)\big], \label{eq:NI} 
\end{equation}
\begin{equation}
N_{\iota,n}^{(2)}(x)= \frac{1}{|\TT_n|} \sum_{u \in \TT_n}  \Big( K_{h_n}(x-X_u) \big(f_{\iota}(X_u)-f_{\iota}(x)\big) - \EE_\nu\big[K_{h_n}(x-X_u) \big(f_{\iota}(X_u)-f_{\iota}(x)\big] \Big).
\label{eq:NII} 
\end{equation}
We claim that 
$$ 
\sqrt{|\TT_n|h_n} \,N_{\iota,n}^{(1)}(x) \longrightarrow 0  \quad \text{and} \quad \sqrt{|\TT_n|h_n} \, N_{\iota,n}^{(2)}(x) \rightarrow 0 , \quad \PP_\mu - a.s.
$$
as $n\rightarrow \infty$.
\vip
\noindent \textit{Step 1.1.} Set 
\begin{equation} \label{eq:Hn3}
H_n (\cdot) = K\big(h_n^{-1}(x-\cdot)\big) \big(f_{\iota}(\cdot)-f_{\iota}(x)\big)
\end{equation}
with $h_n$ sufficiently small such that $x-h_n\text{supp}(K) \subset \Dd$.
By the many-to-one formula \eqref{eq:Mto11}, after a decomposition of the subtree $\TT_n$ in $\cup_{m = 0}^n \GG_m$,
\begin{equation} \label{trouverlebiais0}
N_{\iota,n}^{(1)}(x) = \frac{1}{|\TT_n|} \sum_{m=0}^n |\GG_m| \, \EE_\nu\big[K_{h_n}(x-Y_m) \big(f_{\iota}(Y_m)-f_{\iota}(x)\big)\big] = h_n^{-1} \nu(H_n)
\end{equation}
since $\nu$ is the invariant measure of the tagged-branch chain $(Y_m)_{m\geq 0}$ and
\begin{align*}
%\EE_\nu\big[K_{h_n}&(x-Y_m) \big(f_{\iota}(Y_m)-f_{\iota}(x)\big)\big]  \\
\nu(H_n) & = \int_\statespace K\big(h_n^{-1}(x-y)\big) \big(f_{\iota}(y)-f_{\iota}(x)\big) \nu(y) dy \\
& = h_n \int_\statespace K(y) \big( f_{\iota}(x-h_ny)-f_{\iota}(x) \big) \nu(x-h_ny) dy \\
& = h_n \int_\statespace K(y) \Big(  \big( (\nu f_{\iota})(x-h_ny)- (\nu f_{\iota})(x) \big) -  \big( \nu(x-h_ny)- \nu(x) \big) f_{\iota}(x) \Big) dy .
\end{align*}
We now use that both $(\nu f_\iota)$ and $f_\iota$ have derivatives up to order $\lfloor \beta \rfloor$. Also remind that $K$ is of order $n_0 > \beta$. By a Taylor expansion, for some $\vartheta$ and $\vartheta' \in (0,1)$,
\begin{align}
%\EE_\nu\big[K_{h_n}&(x-Y_m) \big(f_{\iota}(Y_m)-f_{\iota}(x)\big)\big] \notag \\
\nu(H_n) & = h_n \int_\statespace K(y) \Big( \frac{(-h_n y)^{\lfloor \beta \rfloor}}{\lfloor \beta \rfloor !} (\nu f_{\iota})^{\lfloor \beta \rfloor}(x-\vartheta h_ny)  - \frac{(-h_n y)^{\lfloor \beta \rfloor}}{\lfloor \beta \rfloor !} \nu^{\lfloor \beta \rfloor}(x-\vartheta' h_ny) f_{\iota}(x) \Big)  dy \label{trouverlebiais}\\
& = h_n \int_\statespace K(y) \frac{(-h_n y)^{\lfloor \beta \rfloor}}{\lfloor \beta \rfloor !}\Big( \big( (\nu f_{\iota})^{\lfloor \beta \rfloor}(x-\vartheta h_ny) - (\nu f_{\iota})^{\lfloor \beta \rfloor}(x)\big) \notag \\
& \hspace{6.5cm} -  \big(\nu^{\lfloor \beta \rfloor}(x-\vartheta' h_ny)-\nu^{\lfloor \beta \rfloor}(x)\big) f_{\iota}(x)\Big) dy. \notag
\end{align}
Thus, using $(\nu f_{\iota}) \in \Hh^\b_\Dd(L'')$ and $\nu \in \Hh^\b_\Dd(L'')$ for some $L''> 0$,
$$
|\nu(H_n)|  \leq h_n \int_\statespace |K(y)| \frac{|h_n y|^{\lfloor \beta \rfloor}}{\lfloor \beta \rfloor !} \Big( \big(L''|\vartheta h_n y|^{\{\b\}} \big) + \big(L''|\vartheta' h_n y|^{\{\b\}} \big) f_{\iota}(x) \Big)dy \lesssim h_n^{1+\beta}.
$$
Hence, recalling \eqref{trouverlebiais0}, $N_{\iota,n}^{(1)}(x) \lesssim h_n^{\beta}$ and $\sqrt{|\TT_n|h_n} \,N_{\iota,n}^{(1)}(x)$ goes to zero when $n$ goes to infinity choosing $h_n \propto |\TT_n|^{-\a}$ with $\a>1/(1+2\beta)$.
\vip
\noindent \textit{Step 1.2.} 
In the same way we have proved $|\nu(H_n)| \lesssim h_n^{1+ \b }$, we prove $|\Qq H_n(y)| \lesssim h_n^{1+ \b }$ using the fact that $z\leadsto f_\iota(z)\Qq(y,z)$ and $z\leadsto \Qq(y,z)$ belong to $\Hh^\b_\Dd(L'')$ for some other $L''>0$ for any fixed $y\in\statespace$.
%\begin{align*}
%\Qq H_n(y) & = h_n \int_\statespace K(z) \big(f_{\iota}(x-h_n z)-f_{\iota}(x)\big) \Qq(y,x-h_n z) dz \\
%& = h_n^{1+\lfloor \b \rfloor} \int_\statespace z^{\lfloor \b \rfloor} K(z) \big( f_{\iota}^{\lfloor \b \rfloor}(x- \vartheta h_n z) - f_{\iota}^{\lfloor \b \rfloor}(x) \big) \Qq(y,x-h_n z) dz,
%\end{align*}
%for some $\vartheta\in (0,1)$, since $f_\iota$ has derivatives up to order $\lfloor \beta \rfloor$ and $K$ is of order $n_0>\beta$. Thus, using $f_\iota \in \Hh^\b_\Dd(L)$,
%$$
%|\Qq H_n(y)| \leq  |\Qq|_{\statespace,\Dd} h_n^{1+\lfloor \b \rfloor} \int_\statespace |z|^{\lfloor \b \rfloor} |K(z)| \big(L|\vartheta h_n z|^{\{\b\}} \big) dz \lesssim h_n^{1+ \b }.
%$$
This enables us to reinforce the inequality of Lemma~\ref{lem:gagnerh}(i) and using Lemma~\ref{lem:gagnerh}(ii) we obtain
$$
|\Qq^{l} H_{n}(y) - \nu(H_n)|  \lesssim h_n^{1+ \b } \wedge \big(\VV(y) \rho^{l}\big) \, , \quad l \geq 1.
$$
It brings the following upper-bound using the same technique as in Step 1 and Step 2 of Proposition~\ref{prop:nuhat_L2}:
$$\EE_\mu\big[\big(N_{\iota,n}^{(2)}(x)\big)^2\big] \lesssim h_n^{\b} \big(|\TT_n| h_n \big)^{-1} ,$$ which, by the Borel-Cantelli lemma, leads to the $\PP_\mu$- almost sure convergence of $\sqrt{|\TT_n|h_n} N_{\iota,n}^{(2)}(x)$ to zero, choosing $h_n \propto |\TT_n|^{-\a}$ with $\a > 0$. 
\vip
\noindent \textit{Step 1.3.} To end the first step of the proof, we prove that the remainder term is such that
\begin{equation} \label{eq:rest}
\sqrt{|\TT_n|h_n} R_{\iota,n}(x) \rightarrow 0 , \quad \PP_\mu - a.s.
\end{equation}
as $n\rightarrow \infty$.
Write
\begin{equation} \label{eq:partyrest2}
R_{\iota,n}(x) = \big( \widehat{\nu}_n(x) - \varpi_n \big) f_{\iota}(x) {\bf 1}_{\{ \widehat{\nu}_n(x) < \varpi_n \}}
\end{equation}
where $\big(\widehat{\nu}_n(x) - \varpi_n\big)$ converges $\PP_\mu$-almost surely to $\nu(x)$. We can easily prove that ${\bf 1}_{\{ \widehat{\nu}_n(x) < \varpi_n \}}$ converges $\PP_\mu$-almost surely to $0$, since $\widehat{\nu}_n(x)$  converges $\PP_\mu$-almost surely to $\nu(x)$ and $\varpi_n\rightarrow 0$,
which means that ${\bf 1}_{\{ \widehat{\nu}_n(x) < \varpi_n \}}$ is null  $\PP_\mu$-almost surely beyond some integer. So
$$
\sqrt{|\TT_n| h_n} {\bf 1}_{\{ \widehat{\nu}_n(x) < \varpi_n \}}   = 0 , \quad \PP_\mu - a.s.
$$
beyond some integer and \eqref{eq:rest} is thus proved.
\vip
\noindent \textit{Step 2. Main term $M_{\iota,n}(x)$.} 
We will make use of the central limit theorem for martingale triangular arrays (see for instance Duflo \cite{Duflo98}, Theorem 2.1.9, p 46). 
We follow Delmas and Marsalle \cite{DM10} (section 4) in order to define the notion of the $n$ first individuals of~$\TT$.
Let $( \Pi^*_m)_{m\geq1}$ be independent random variables, where for each~$m$, $\Pi^*_m$ is uniformly distributed over the set of permutations of~$\GG_m$.
The collection $\big( \Pi^*_m(1), \ldots, \Pi^*_m(|\GG_m|) \big)$ is a random drawing without replacement of all the elements of $\GG_m$. % Je rep rends ce que disent Delmas et Marsalle.
For $k \geq 1$, set $\rho_k = \inf \{ k' \geq 0 \, , k \leq |\TT_{k'}| \}$ (it can be seen as the number of generation to which belongs the $k$-th element of~$\TT$). We finally define a random order on $\TT$ through $\widetilde{\Pi}$ the function from $\{ 1, 2, \ldots\}$ to $\TT$ such that $\widetilde{\Pi}(1) = \emptyset$ and for $k \geq 2$, $\widetilde{\Pi}(k) = \Pi^*_{\rho_k} (k - |\TT_{\rho_k -1}|)$.
We introduce the filtration $\Gg =\left(\Gg_{n},n \geq 0 \right)$ defined by $\GG_0 =\sigma(X_{\emptyset})$ and for each $n \geq 1$,
\begin{equation*}
\Gg_{n} =\sigma\Big( \big((X_{\widetilde{\Pi}(k)}, X_{(\widetilde{\Pi}(k),0)}, X_{(\widetilde{\Pi}(k),1)}) , 1\leq
k\leq n\big), (\widetilde{\Pi}(k), 1\leq k\leq n+1 ) \Big).
\end{equation*}
For $n\geq 1$, we consider the vector of bivariate random variables $\overline{E}^{(n)}(x) = (\overline{E}^{(n)}_k(x) , 1 \leq k \leq |\TT_n| )$ defined by
\begin{equation} \label{eq:triangarray}
\overline{E}^{(n)}_k(x)= \sum_{l=1}^{k} E^{(n)}_l(x) \quad \text{with} \quad E^{(n)}_l(x) = (|\TT_n|h_n)^{-1/2}
\begin{pmatrix}K\big(h_n^{-1}(x-X_{\widetilde{\Pi}(l)})\big)
\ep_{(\widetilde{\Pi}(l),0)} \\
K\big(h_n^{-1}(x-X_{{\widetilde{\Pi}(l)}})\big) \ep_{(\widetilde{\Pi}(l),1)}\end{pmatrix}.
\end{equation}
%with $x \in\statespace$.
Notice that $\overline{E}^{(n)}(x)$ is a square-integrable martingale adapted to $\Gg = (\Gg_n)_{n\geq 0}$. Then, $(\overline{E}^{(n)}(x) ,$ $n \geq 1 )$ is a square-integrable  $\Gg$-martingale triangular array whose
bracket is given by
\begin{align*}
\langle \overline{E}^{(n)}(x)\rangle_{k} =
\sum_{l=1}^{k} \EE \Big[E^{(n)}_l(x) \big(E^{(n)}_l(x)\big)^t \Big|\Gg_{l-1}\Big]  = \Big(\frac{1}{|\TT_n|h_n}\sum_{l=1}^{k} K^{2}\big(h_n^{-1}(x-X_{\widetilde{\Pi}(l)})\big)\Big)\Gamma
\end{align*}
where $\Gamma$ is the noise covariance matrix.
%From the result of Prakasa Rao (see \cite{PrakasaRao83} Theorem 2.1.1), 
We apply Proposition~\ref{prop:nuhat_ps} (with $|K|_2^{-2} K^2$ replacing $K$ as kernel function) and we obtain
\begin{equation} \label{eq:crochet}
\langle \overline{E}^{(n)}(x)\rangle_{|\TT_n|} =  \Big(\frac{1}{|\TT_n|h_n}\sum_{u \in \TT_n} K^{2}\big(h_n^{-1}(x-X_u)\big)\Big)\Gamma \rightarrow | K |_2^2 \nu(x) \, \Gamma , \quad \PP_\mu - a.s.
\end{equation}
as $n \rightarrow \infty$. Condition (A1) of Theorem 2.1.9 of \cite{Duflo98} is satisfied, this is exactly \eqref{eq:crochet}. Since the bivariate random variables $\big( (\ep_{u0},\ep_{u1}),u  \in\TT \big)$ are independent and identically distributed and since $\ep_0$ and $\ep_1$ have finite moment of order four,
\begin{align*} 
\sum_{k = 1}^ {|\TT_n|} \EE\Big[\|\overline{E}^{(n)}_{k}(x) - \overline{E}^{(n)}_{k-1}(x)\|^{4} \Big|\Gg_{k-1}\Big] & = \frac{\EE\big[ (\ep_0^2 + \ep_1^2)^2\big]}{(|\TT_n|h_n)^{2}} \sum_{k = 1}^
{|\TT_n|} K^{4}\big(h_n^{-1}(x-X_{\widetilde{\Pi}(k)})\big) \\
& =  \frac{\EE\big[ (\ep_0^2 + \ep_1^2)^2\big]}{|\TT_n|h_n} \Big( \frac{1}{|\TT_n|h_n} \sum_{u \in \TT_n} K^{4}\big(h_n^{-1}(x-X_u)\big) \Big)
\end{align*}
where $\|\cdot\|$ denotes the Euclidian norm for vectors and setting $\overline{E}^{(n)}_0(x) = 0$.
We apply Proposition~\ref{prop:nuhat_ps} (with $|K^2|_2^{-2} K^4$ replacing $K$ as kernel function) and we conclude that
\begin{equation} \label{eq:EEM4}
\sum_{k =1}^{\TT_n} \EE\Big[\|\overline{E}^{(n)}_{k}(x) - \overline{E}^{(n)}_{k-1}(x)\|^{4}\Big|\Gg_{k-1}\Big]\rightarrow 0 , \quad \PP_\mu - a.s.
\end{equation}
The Lyapunov condition \eqref{eq:EEM4} implies the Lindeberg condition (A2) of Theorem 2.1.9 of \cite{Duflo98} (see section 2.1.4, p 47 of \cite{Duflo98}).  Therefore, by the central limit theorem for martingale triangular arrays, 
\begin{equation*}
\overline{E}^{(n)}_{|\TT_n|}(x) 
%= (|\TT_n|h_n)^{-1/2}\sum_{u \in \TT_n} \begin{pmatrix}K\big( h_n^{-1}(x-X_u)\big)\ep_{u0} \\ K\big( h_n^{-1}(x-X_u)\big)\ep_{u1}\end{pmatrix}
= \sqrt{|\TT_n|h_n}\begin{pmatrix} M_{0,n}(x) \\ M_{1,n}(x) \end{pmatrix}
\toL
\Nn_{2}\big(\boldsymbol{0}_2, | K |_2^2 \nu(x) \, \Gamma\big).
\end{equation*}
\vip
We conclude gathering Step 1 and Step 2, together with $$\widehat{\nu}_n(x) \vee \varpi_n \rightarrow \nu(x) , \quad \PP_\mu - a.s.$$ and the Slutsky lemma.
\vip
\noindent \textit{Step 3. Independence.}  Let $x_1 \neq x_2 \in \Dd$.
We repete Step 2 for
\begin{equation*}
\overline{E}_{k}^{(n)}(x_1,x_2) = \sum_{l=1}^{k} E^{(n)}_l(x_1,x_2) \quad \text{with} \quad E^{(n)}_l(x_1,x_2) = 
%(|\TT_n|h_n)^{-1/2}
%\begin{pmatrix}
%K\big(h_n^{-1}(X_{\widetilde{\Pi}(l)} -x)\big) \ep_{(\widetilde{\Pi}(l),0)} \\
%K\big(h_n^{-1}(X_{{\widetilde{\Pi}(l)}} - x)\big) \ep_{(\widetilde{\Pi}(l),1)} \\
%K\big(h_n^{-1}(X_{\widetilde{\Pi}(l)} -y)\big) \ep_{(\widetilde{\Pi}(l),0)} \\
%K\big(h_n^{-1}(X_{{\widetilde{\Pi}(l)}} -y)\big) \ep_{(\widetilde{\Pi}(l),1)}
%\end{pmatrix}
\begin{pmatrix}
E^{(n)}_l(x_1) \\ E^{(n)}_l(x_2)
\end{pmatrix}
\end{equation*}
and we are led to
\begin{equation*}
\overline{E}^{(n)}_{|\TT_n|}(x_1,x_2) = 
%(|\TT_n|h_n)^{-1/2}\sum_{u \in \TT_n}
%\begin{pmatrix}
%K\big( h_n^{-1}(x-X_u)\big)\ep_{u0} \\
%K\big( h_n^{-1}(x-X_u)\big)\ep_{u1} \\
%K\big( h_n^{-1}(X_u-y)\big)\ep_{u0} \\
%K\big( h_n^{-1}(X_u-y)\big)\ep_{u1}
%\end{pmatrix}
\sqrt{|\TT_n|h_n}\begin{pmatrix} M_{0,n}(x_1) \\ M_{1,n}(x_1) \\  M_{0,n}(x_2) \\ M_{1,n}(x_2) \end{pmatrix}
\toL
\Nn_{4}\left(\boldsymbol{0}_4, | K |_2^2  \begin{pmatrix}\nu(x_1) \, \Gamma  & \boldsymbol{0}_{2,2} \\ \boldsymbol{0}_{2,2} & \nu(x_2) \, \Gamma \end{pmatrix} \right)
\end{equation*}
with $ \boldsymbol{0}_{4}$ and $\boldsymbol{0}_{2,2}$ respectively the null vector of size 4 and the null matrix of size $2\times 2$,
which shows asymptotic independence between $\big(M_{0,n}(x_1),M_{1,n}(x_1)\big)$ and $\big(M_{0,n}(x_2),M_{1,n}(x_2)\big)$ and thus between $\big(\widehat{f}_{0,n}(x_1),\widehat{f}_{1,n}(x_1)\big)$ and $\big(\widehat{f}_{0,n}(x_2),\widehat{f}_{1,n}(x_2)\big)$ as asserted.
\end{proof}

\begin{proof}[Proof of Theorem~\ref{thm:asymptotic_normality2}]
\noindent \textit{Step 1. Case $\kappa<\infty$.} 
We look carefully at the proof of Theorem~\ref{thm:asymptotic_normality} and see that only Step 1.1 has to be reconsidered.
We prove that 
$$ 
\sqrt{|\TT_n|h_n} \,N_{\iota,n}^{(1)}(x) \longrightarrow \kappa \nu(x) \boldsymbol{m}(x).
$$
Indeed for $\beta$ an integer, using \eqref{trouverlebiais0} and \eqref{trouverlebiais}, 
$$
\sqrt{|\TT_n|h_n} \, N_{\iota,n}^{(1)}(x) = \frac{(-1)^{\beta} h_n^{\beta} \sqrt{|\TT_n|h_n} }{ \beta  !} \int_\statespace y^{\beta} K(y)  \big( (\nu f_{\iota})^{ \beta}(x - \vartheta h_ny)  -  \nu^{ \beta }(x - \vartheta' h_ny) f_{\iota}(x) \big)  dy 
$$
and we conclude letting $n$ go to infinity since $(\nu f_{\iota})^{ \beta}$ and $ \nu^{ \beta }$ are continuous.

%We look carefully at the analysis of the term $\sqrt{|\TT_n|h_n}\, N_{\iota,n}(x)$.
%We claim that 
%$$ 
%\sqrt{|\TT_n|h_n} \,N_{\iota,n}^{(1)}(x) \longrightarrow \kappa \nu(x) \boldsymbol{m}(x) \quad \text{and} \quad \sqrt{|\TT_n|h_n}  \, N_{\iota,n}^{(2)}(x) \rightarrow 0 , \quad \PP_\mu - a.s.
%$$
%as $n\rightarrow \infty$. 
%The remainder term $\sqrt{|\TT_n|h_n} \, R_{\iota,n}(x)$ still goes $\PP_\mu$-almost surely to $0$ (Step 1.3 of the proof of Theorem~\ref{thm:asymptotic_normality}) and the main term $\sqrt{|\TT_n|h_n} \, M_{\iota,n}(x)$ has also the same behaviour as previously (Step 3 of the proof of Theorem~\ref{thm:asymptotic_normality}). 
\vip
\noindent \textit{Step 2. Case $\kappa=\infty$.} With the same argument we prove that in that case,
$$ 
h_n^{-\beta} \,N_{\iota,n}^{(1)}(x) \longrightarrow \nu(x) \boldsymbol{m}(x)
$$
as $n\rightarrow \infty$. 
Looking at Steps 1.2 and 1.3 in the proof of Theorem~\ref{thm:asymptotic_normality}, we obtain
$$ 
h_n^{-\beta} \,N_{\iota,n}^{(2)}(x) \longrightarrow 0 \quad \text{and} \quad h_n^{-\beta} \,R_{\iota,n}(x) \longrightarrow 0 , \quad \PP_\mu - a.s.
$$
In addition,
$$
h_n^{-\beta} \,M_{\iota,n}(x) = \big( h_n^{\beta} \sqrt{|\TT_n|h_n} \big)^{-1}   \, \sqrt{|\TT_n|h_n} \,M_{\iota,n}(x)  \overset{\PP_\mu}{\longrightarrow} 0,
$$
since we work in the case  $h_n^\b \sqrt{|\TT_n|h_n} \rightarrow \kappa=\infty$ and  $\sqrt{|\TT_n|h_n} \,M_{\iota,n}(x)$ is asymptotically Gaussian as proved previously (Step 3 in the proof of Theorem~\ref{thm:asymptotic_normality}).
\end{proof}

\begin{rk}
If $\beta$ is not an integer, we could generalize Theorem~\ref{thm:asymptotic_normality2} but at the cost of introducing fractional derivatives. Note that the definition of this notion is not unique (see \cite{Trujillo} or \cite{Usero}). We restrict the parameter $\beta$ to be an integer in order to avoid here additional technicalities.
\end{rk}

\subsection{Proofs of Corollary \ref{cor:TCLBierens} and Theorem~\ref{thm:test}}
\begin{proof}[Proof of Corollary \ref{cor:TCLBierens}]
On the one hand, applying Theorem~\ref{thm:asymptotic_normality2} to the estimator $\widehat{f}_{\iota,n}^{(a)}$ built with the bandwidth $h_n^{(a)} \propto |\TT_n|^{-1/(2\beta+1)}$,
\begin{equation*}
|\TT_n|^{\frac{\b}{2\b+1}} A_n(x) 
\toL
 \Nn_{2}\big(  \boldsymbol{m}_2(x) \, , \boldsymbol{\Sigma}_2(x)\big)
\quad \text{with} \quad  A_n(x)  = \begin{pmatrix} \widehat{f}_{0,n}^{(a)}(x)-f_{0}(x)\\ \widehat{f}_{1,n}^{(a)}(x)-f_{1}(x)\end{pmatrix}.
\end{equation*}
On the other hand, applying Theorem~\ref{thm:asymptotic_normality2} to the estimator $\widehat{f}_{\iota,n}^{(b)}$ built with the bandwidth $h_n^{(b)} \propto |\TT_n|^{-\delta/(2\beta+1)}$ for $\delta\in (0,1)$,
$$
|\TT_n|^{ \frac{\delta \b}{2\beta+1}} B_n(x) \overset{\PP_\mu}{\longrightarrow} \boldsymbol{m}_2(x) \quad \text{with} \quad   B_n(x) = \begin{pmatrix} \widehat{f}_{0,n}^{(b)}(x)-f_{0}(x)\\ \widehat{f}_{1,n}^{(b)}(x)-f_{1}(x)\end{pmatrix}.
$$
Combining these two results, we obtain
\begin{multline*}% \label{eq:TCLBierens2}
|\TT_n|^{\frac{\b}{2\b+1}}\begin{pmatrix} \bar{f}_{0,n}(x)-f_{0}(x)\\ \bar{f}_{1,n}(x)-f_{1}(x)\end{pmatrix} =
\frac{ |\TT_n|^{\frac{\b}{2\b+1}}A_n(x) -  |\TT_n|^{ \frac{\delta \b}{2\beta+1}}  B_n(x)}{1-|\TT_n|^{- \frac{(1-\delta)\b}{2\beta+1}}}  \toL \Nn_{2}\big(  \boldsymbol{0}_2 \, , \boldsymbol{\Sigma}_2(x)\big),
\end{multline*}
as announced.
\end{proof}

\begin{proof}[Proof of Theorem~\ref{thm:test}]
The study of the test statistics $W_n(x_1,\ldots,x_k)$ under $\Hh_0$ and $\Hh_1$ then follows classical lines. We give here the main argument for $k= 2$. By \ref{cor:TCLBierens} and using in addition the asymptotical independence stated in Theorem~\ref{thm:asymptotic_normality}, we obtain
$$
|\TT_n|^{\frac{\b}{2\b+1}}\begin{pmatrix} \bar{f}_{0,n}(x_1)-f_{0}(x_1)\\ \bar{f}_{1,n}(x_1)-f_{1}(x_1) \\  \bar{f}_{0,n}(x_2)-f_{0}(x_2)\\ \bar{f}_{1,n}(x_2)-f_{1}(x_2)\end{pmatrix} 
\toL \Nn_4 \big( \boldsymbol{0}_{4} , \boldsymbol{\Sigma}_4(x_1,x_2) \big) \;\; \text{with} \;\;  \boldsymbol{\Sigma}_4(x_1,x_2) = \left(
\begin{array}{cc}
\boldsymbol{\Sigma}_2(x_1)  &   \boldsymbol{0}_{2,2}   \\
\boldsymbol{0}_{2,2}  &   \boldsymbol{\Sigma}_2(x_2)
\end{array}
\right).
$$
Then, using the Delta-method,
$$
|\TT_n|^{\frac{\b}{2\b+1}}\begin{pmatrix} \big( \bar{f}_{0,n}(x_1) -  \bar{f}_{1,n}(x_1)\big) - \big( f_{0}(x_1) -f_{1}(x_1)\big) \\  \big(\bar{f}_{0,n}(x_2)- \bar{f}_{1,n}(x_2)\big)-\big(f_0(x_2) - f_{1}(x_2)\big)\end{pmatrix} 
\toL \Nn_2 \big( \boldsymbol{0}_{2} ,  \boldsymbol{\Sigma}_2(x_1,x_2) \big)
$$
with 
$$ \boldsymbol{\Sigma}_2(x_1,x_2)  = 
|K|_2^2 (\sigma_0^2+\sigma_1^2 - 2\sigma_0 \sigma_1 \varrho)
\left(
\begin{array}{cc}
\big(\nu(x_1)\big)^{-1}  &   0   \\
0  &  \big(\nu(x_2)\big)^{-1}
\end{array}
\right),
$$
which boils down to, under $\Hh_0$,
%$$
%|\TT_n|^{-\frac{\b}{2\b+1}}\begin{pmatrix} \big( \bar{f}_{0,n}(x_1) -  \bar{f}_{1,n}(x_1)\big) \\  \big(\bar{f}_{0,n}(x_2)- \bar{f}_{1,n}(x_2)\big)t\end{pmatrix} 
%\toL \Nn_2 \Big( 0 , |K|_2^2 (\sigma_0^2+\sigma_1^2 - 2\varrho)
%\left(
%\begin{array}{cc}
%\nu(x_1)^{-1}  &   0   \\
%0  &  \nu(x_2)^{-1}
%\end{array}
%\right)
% \Big)
%$$
$$
\frac{|\TT_n|^{\frac{\b}{2\b+1}}}{\sqrt{|K|_2^2 (\sigma_0^2+\sigma_1^2 - 2\sigma_0 \sigma_1 \varrho)}}\begin{pmatrix} \nu(x_1)^{1/2} \big( \bar{f}_{0,n}(x_1) -  \bar{f}_{1,n}(x_1)\big) \\  \nu(x_2)^{1/2} \big(\bar{f}_{0,n}(x_2)- \bar{f}_{1,n}(x_2)\big) \end{pmatrix} 
\toL \Nn_2 \big( \boldsymbol{0}_2 ,  \boldsymbol{I}_2 \big),
$$
with $\boldsymbol{I}_2$ the identity matrix of size $2\times 2$.
The replacement of $\nu(\cdot)$ by its estimator $\widehat{\nu}_n(\cdot)$ is licit by the Stlutsky theorem.
Thus, under $\Hh_0$, $W_n(x_1,x_2)\toL \chi^2(2)$, the chi-squared distribution with~$2$ degrees of freedom. Under $\Hh_1$, we prove that $W_n(x_1,x_2)$ converges $\PP_\mu$-almost surely to $+ \infty$ following the same lines and using 
$$
\sum_{l=1}^2 \widehat{\nu}_n(x_l) \big( \bar{f}_{0,n}(x_l) -  \bar{f}_{1,n}(x_l)\big)^2  \longrightarrow \sum_{l=1}^2 \nu(x_l)  \big(f_0(x_l) - f_1(x_l)\big)^2  \neq 0 , \quad \PP_\mu - a.s.
$$
when $\Hh_1$ is valid.
\end{proof}

%%%%%%%%%%%%%%%%%%%%%%%%%%%
\section{\textsc{Appendix}} \label{sec:appendix}
%%%%%%
\subsection{Proof of Lemma~\ref{lem:Mto1}}
Let us first prove (i), see Delmas and Marsalle \cite{DM10} (Lemma 2.1). For $u = (u_1,u_2, \ldots , u_m) \in \GG_m$,
$$
\EE_\mu \big[ g(X_u) \big] = \mu\big(\Pp_{u_1} \Pp_{u2} \ldots \Pp_{u_m}(g)\big).
$$
Then
\begin{align*}
\EE_\mu \big[ \sum_{u\in\GG_m} g(X_u) \big] = \sum_{(u_1,\ldots,u_m)\atop\in \{0,1\}^m} \mu\big(\Pp_{u_1} \Pp_{u2} \ldots \Pp_{u_m}(g)\big) & = \mu\Big( \sum_{(u_1,\ldots,u_m)\atop\in \{0,1\}^m} \Pp_{u_1} \Pp_{u2} \ldots \Pp_{u_m}(g)\Big) \\
& = \mu \big( 2^m (\Pp_0 + \Pp_1 )^m (g) \big) = |\GG_m| \mu(\Qq^m g)
\end{align*}
since $\Qq = (\Pp_0+\Pp_1)/2$ and $|\GG_m| = 2^m$. We also know that $\mu(\Qq^m g) = \EE_\mu[g(Y_m)]$.\\

We now turn to (ii). We refer to Guyon \cite{Guyon} for another strategy of proof (proof of Equation (7)).
%For $m\geq 0$,
%\begin{equation*}
%\EE_\mu \Big[\big(\sum_{u \in\GG_m} g(X_u) \big)^2\Big]  
%= \EE_\mu \Big[ \sum_{u \in\GG_m} g^2(X_u) \Big]   
%+ \EE_\mu \Big[ \sum_{u \neq v \in\GG_m} g(X_u) g(X_v) \Big].
%\end{equation*}
Some notation first: for $u = (u_1,\ldots,u_m)$ and $v=(v_1,\ldots,v_n)$ in $\TT$, we write $uv = (u_1,\ldots,u_m,v_1,\ldots,v_n)$ for the concatenation. For $m\geq 0$, we denote by $\Ff_m$ the sigma-field generated by $(X_u, |u|\leq m)$.

For $m \geq 1$, whenever $u\neq v\in \GG_m$, there exist $w\in \TT_{m-1}$ together with $i\neq j \in \{0,1\}$ and $\tilde{u}, \tilde{v}$ such that $u = wi\tilde{u}$ and $v = wj\tilde{v}$, where we call $w$ the most recent common ancestor of $u$ and $v$.
%For $u, v \in \TT$ we denote by $u \wedge v$ their most recent common ancestor. 
The main argument uses consecutively a first conditioning by $\Ff_{|w|+1}$ which lets $X_u$ and $X_v$ conditionally independent and a conditional many-to-one formula of kind (i), a second conditioning by $\Ff_{|w|}$ and the definition of the $\TT$-transition $\Ttransition$ and finally the many-to-one formula (i),
\begin{align*}
\EE_\mu \Big[ \sum_{u \neq v \in \GG_m} g(X_u) g(X_v) \Big]  & =  2 \sum_{l=1}^m \sum_{w\in \GG_{m-l}} \EE_\mu \big[ \sum_{\tilde{u} \in \GG_{l-1} \atop \tilde{v} \in \GG_{l-1}} g(X_{w0\tilde{u}})g(X_{w1\tilde{v}})\big]\\
& = 2 \sum_{l=1}^m \sum_{w\in \GG_{m-l}} |\GG_{l-1}|^2 \EE_\mu  \Big[ \Qq^{l- 1} g(X_{w0}) \Qq^{l - 1} g(X_{w1}) \Big] \\
& =  2 \sum_{l=1}^m (2^{l-1})^2\sum_{w\in \GG_{m-l}}  \EE_\mu \Big[ \Ttransition ( \Qq^{l - 1} g \otimes \Qq^{l - 1} g ) ( X_{w} ) \Big] \\
& = 2 \sum_{l = 1}^m (2^{l -1})^2 |\GG_{m-l}| \EE_\mu \Big[   \Ttransition ( \Qq^{l  - 1} g \otimes \Qq^{l  - 1} g ) ( Y_{m-l} ) \Big]  \\
& =  2^m \sum_{l = 1}^m 2^{l-1} \mu \Big( \Qq^{m-l} \big( \Ttransition ( \Qq^{l - 1} g \otimes \Qq^{l  - 1} g ) \big) \Big),
\end{align*}
as asserted.

\subsection{Proof of Lemma~\ref{lem:gagnerh}}
First,
\begin{align*}
| \Qq H_n (y) | & \leq \int_\statespace | F\big(h_n^{-1}(x-z)\big)| |G(z)| \Qq(y,z)dz  \\
& = h_n \int_{{\rm supp}(F)} | F(z)| |G(x-h_n z)| \Qq(y,x-h_n z)dz  \leq h_n \, |G|_\Dd |\Qq|_{\statespace,\Dd} |F|_1
\end{align*}
for $h_n$ such that $x-h_n {\rm supp}(F) \subset \Dd$ (remind that $x$ belongs to the interior of $\Dd$). We prove in the same way the bound on $\nu (H_n)$.
Hence we have proved (i) and we now turn to (ii). The first bound $h_n$ obviously comes from (i) and it remains to prove the second bound $\VV(y) \rho^m$. Under Assumption~\ref{ass:ergodicity2}, we apply Proposition~\ref{lem:ergodicity} to $g = H_n/|H_n|_\infty$ and it brings 
$$
|\Qq^m H_n(y) - \nu(H_n)| \leq R |H_n|_\infty \rho^m \VV(y).
$$
Since $|H_n|_\infty = |F|_\infty |G|_\Dd$ for $h_n$ such that $x-h_n {\rm supp}(F) \subset \Dd$, we obtain the announced upper-bound.
%

%%%%%%%
\subsection{Proof of Lemma~\ref{lem:minorationnu}} For every $|z|\leq d$, 
$$
\nu(z) = \int_{\statespace} \nu(y) \Qq(y,z) dy \geq \inf_{|y|\leq M_2 , \atop |z|\leq d} \Qq(y,z) \int_{|y|\leq M_2} \nu(y) dy.
$$
On the one hand,
$$
\inf_{|x|\leq M_2 , \atop |y|\leq d} \Qq(x,y) \geq \frac{1}{2} \Big\{ \inf_{|x|\leq M_2 , \atop |y|\leq d} G_0\big(y-f_0(x)\big) +  \inf_{|x|\leq M_2 , \atop |y|\leq d} G_1\big(y-f_1(x)\big)\Big\} \geq \delta\big(d + (\ell + \gamma M_2)\big) \geq \delta(M_3) >0
$$
if $d>0$ is such that $d +  (\ell + \gamma M_2) \leq M_3$, which is possible by Assumption~\ref{ass:numinor}.
On the other hand,
\begin{align*}
\int_{|y| > M_2} \nu(y) dy & = \int_{|y|>M_2} \int_{x \in \statespace} \nu(x) \Qq(x,y)  dx dy \\
& \leq |\nu|_{\infty}  \int_{|y|>M_2} \int_{x\in\statespace} \frac{1}{2} \Big( G_0\big( y-f_0(x) \big) + G_1\big(y -f_1(x)\big)\Big) dx dy \leq \eta(M_2)
\end{align*}
since $(f_0,f_1)$ belongs to $\Ff(\gamma,\ell)^2$ and $(G_0,G_1)$ to $\Gg(r,\lambda)^2$. We know in addition that $|\nu|_\infty \leq |\Qq|_{\statespace,\statespace} \leq (|G_0|_\infty+|G_1|_\infty)/2<\infty.$ Using $\eta(M_2)<1$ brings
$$
\int_{|y|\leq M_2} \nu(y) dy > 0.
$$
Hence $\nu(y) \geq \delta(M_3) \big( 1 - \eta(M_2) \big) > 0$ for any $|y|\leq d$. We have uniformity in $f_0$ and $f_1$ since $\delta(M_3)$ and $\eta(M_2)$ are uniform bounds on the class $\Ff(\gamma,\ell)^2$.

%%%%\subsection{Proof of Lemma~\ref{lem:ergodicity2}}
%%%%%\begin{proof}[Proof of Lemma~\ref{lem:ergodicity2}]
%%%%Set $\Cc = \{y\in \statespace; |y| \leq M_4-\ell\}\neq \emptyset$ since $M_4>\ell$ under Assumption~\ref{ass:ergodicity3}.
%%%%We prove that, for all $y\in \Cc$,
%%%%$$
%%%%\inf_{x\in \statespace} \Qq(x,y) = \frac{1}{2} \Big\{ \inf_{x\in \statespace} G_0\big(y-f_0(x)\big) +  \inf_{x\in \statespace} G_1\big(y-f_1(x)\big)\Big\} \geq \delta(M_4) > 0
%%%%$$
%%%%with $\delta(\cdot)$ defined by \eqref{eq:minorg}, since $|y-f_\iota(x)|\leq M_4$ (for $|f_\iota|_\infty\leq \ell$ and $|y|\leq M_4-\ell$). From Douc {\it et al.} \cite{DoucMoulines}, Theorem 6.6 and Lemma 6.10, one can take $\rho = 1-|\Cc|\delta(M_4) = 1-2(M_4-\ell)\delta(M_4)$ and we obtain $\rho<1/2$ under Assumption~\ref{ass:ergodicity3}.
%%%%%\end{proof}

\vip
\noindent {\bf Acknowledgements.} 
We are grateful to A. Guillin and M. Hoffmann for helpful comments and G. Fort for useful discussions. We thank E. L\"ocherbach and P. Reynaud-Bouret for a careful reading.
We also deeply thank N. Krell for discussions on real data and E. Stewart for sharing his data.
S.V. B.P. thanks the Hadamard Mathematics Labex of the {\it Fondation Math\'ematique Jacques Hadamard} for financial support.

%\newpage 

%\tableofcontents

\end{document}